\theoremstyle{plain}
\newtheorem{theorem}{Theorem}[section]
\newtheorem{lemma}[theorem]{Lemma}
\newtheorem{proposition}[theorem]{Proposition}
\theoremstyle{definition}
\newtheorem{definition}[theorem]{Definition}
\newtheorem{remark}[theorem]{Remark}
\numberwithin{equation}{section}
\newcommand{\AlignFootnote}[1]{%
    \ifmeasuring@
    \else
        \footnote{#1}%
    \fi
}
\DeclareMathOperator{\Imm}{Im}
\title{Strong conciseness of coprime commutators in profinite groups}
\author[I. de las Heras]{Iker de las Heras} 
\address{Iker de las Heras: Mathematisches Institut, Heinrich-Heine-Universit\"at, 40225
  D\"usseldorf, Germany; Department of Mathematics, Euskal Herriko Unibertsitatea UPV/EHU, 48940 Leioa, Spain}
\email{iker.delasheras@hhu.de; iker.delasheras@ehu.eus}
\author[M. Pintonello]{Matteo Pintonello} 
\address{Matteo Pintonello: Department of Mathematics, Euskal Herriko Unibertsitatea UPV/EHU, 48940 Leioa, Spain}
\email{matteo.pintonello@ehu.eus}
\author[P. Shumyatsky]{Pavel Shumyatsky} 
\address{Pavel Shumyatsky: Department of Mathematics, University of Brasilia, Brasilia DF, Brazil}
\email{pavel@unb.br}
\date{\today}
\thanks{
The first and second authors are supported by the Spanish Government project PID2020-117281GB-I00,  
partially by FEDER funds and by the Basque Government project IT483-22.
The first author has also received
funding from the European Union’s Horizon 2021 research and innovation programme under the
Marie Sklodowska-Curie grant agreement, project 101067088.
The second author is also supported by a grant FPI-2018 of the Spanish Government.
The third author was supported by FAPDF and CNPq.
}
\keywords{Conciseness, strong conciseness, profinite groups, word problems}
\subjclass[2010]{Primary 20E18; Secondary 28A78}
\begin{document}

\maketitle

\begin{abstract}
Let $G$ be a profinite group. The coprime commutators $\gamma_j^*$ and $\delta_j^*$ are defined as follows. Every element of $G$ is  both a $\gamma_1^*$-value and a $\delta_0^*$-value. For $j\geq 2$, let $X$ be the set of all elements of $G$ that are powers of $\gamma_{j-1}^*$-values. An element $a$ is a $\gamma_j^*$-value if there exist $x\in X$ and $g\in G$ such that $a=[x,g]$ and $(|x|,|g|)=1$. For $j\geq 1$, let $Y$ be the set of all elements of $G$ that are powers of $\delta_{j-1}^*$-values. The element $a$ is a $\delta_j^*$-value if there exist $x,y\in Y$ such that $a=[x,y]$ and $(|x|,|y|)=1$. 

In this paper we establish the following results.

A profinite group $G$ is finite-by-pronilpotent if and only if there is $k$ such that the set of $\gamma_k^*$-values in $G$ has cardinality less than $2^{\aleph_0}$ (Theorem \ref{thm: gamma strongly concise}).

A profinite group $G$ is finite-by-(prosoluble of Fitting height at most $k$) if and only if there is $k$ such that the set of $\delta_k^*$-values in $G$ has cardinality less than $2^{\aleph_0}$ (Theorem \ref{thm: delta strongly concise}).

\end{abstract}

\section{Introduction}

A group word $w$ is called concise in the class of groups $\mathcal C$ if the verbal subgroup $w(G)$ is finite whenever $w$ takes only finitely many values in a group $G\in\mathcal C$. In the sixties Hall raised the problem whether every word is concise in the class of all groups but in 1989 S. Ivanov \cite{Iv89} solved the problem in the negative  (see also \cite[p.\ 439]{ols}). On the other hand, the problem for residually finite groups remains open (cf. Segal \cite[p.\ 15]{Segal} or Jaikin-Zapirain \cite{jaikin}). In recent years several new positive results with respect to this problem were obtained (see \cite{AS1, gushu,  fealcobershu, dms-conciseness, dms-bounded, DMS-engel-II,dmsisra}).

A natural variation of the notion of conciseness for profinite groups was introduced in \cite{dks20}: the word $w$ is strongly concise in a class of profinite groups $\mathcal{C}$ if the verbal subgroup $w(G)$ is finite in any group $G \in \mathcal{C}$ in which $w$ takes less than $2^{\aleph_0}$ values. Here and throughout the paper, whenever $G$ is a profinite group we write $w(G)$ to denote the {\it closed} subgroup generated by $w$-values. A number of new results on strong conciseness of group words can be found in \cite{dks20,detomi,azeshu,khushu}. 

It was noted in \cite{dms21} that the concept of (strong) conciseness can be applied in a wider context. Suppose $ \mathcal{C}$ is a class of profinite groups and $\phi(G)$ is a subset of $G$ for every $G \in\mathcal{C}$. Is the subgroup generated by $\phi(G)$ finite whenever $|\phi(G)| < 2^{\aleph_0}$? This question is interesting whenever $\phi(G)$ is defined in some natural way and/or properties of the subgroup $\langle\phi(G)\rangle$ have strong impact on the structure of $G$. It was shown in \cite{dms21} that the set of coprime commutators in a profinite group is strongly concise.

Given a profinite group $G$ and an element $x\in G$, we denote by $|x|$ (respectively $|G|$) the order of $x$ (respectively $G$) as a supernatural number (or Stenitz number), and $\pi(x)$ (respectively $\pi(G)$) will stand for the set of prime numbers dividing $|x|$ (respectively $|G|$). An element $a\in G$ is a coprime commutator if there are $x,y\in G$ such that $a=[x,y]$ and $(|x|,|y|)=1$. Here, as usual, $[x,y]$ stands for $x^{-1}y^{-1}x,y$. It is well-known that the set of coprime commutators in a profinite group $G$ generates the nilpotent residual $\gamma_\infty(G)$, that is, the smallest normal subgroup $N$ such that $G/N$ is pronilpotent. Of course, $\gamma_\infty(G)=\cap_i\gamma_i(G)$ is the intersection of the lower central series of $G$.

Coprime commutators of higher order in finite groups were defined in \cite{Sh15}. The definition naturally extends to the profinite case. Let $G$ be a profinite group. Every element of $G$ is  both a $\gamma_1^*$-value and a $\delta_0^*$-value. For $j\geq 2$, let $X$ be the set of all elements of $G$ that are powers (in the profinite sense) of $\gamma_{j-1}^*$-values. An element $a$ is a $\gamma_j^*$-value if there exist $x\in X$ and $g\in G$ such that $a=[x,g]$ and $(|x|,|g|)=1$. For $j\geq 1$, let $Y$ be the set of all elements of $G$ that are powers (in the profinite sense) of $\delta_{j-1}^*$-values. The element $a$ is a $\delta_j^*$-value if there exist $x,y\in Y$ such that $a=[x,y]$ and $(|x|,|y|)=1$. 

Set $D_1(G)=\gamma_\infty(G)$ and $D_i(G)=\gamma_\infty(D_{i-1})$ for $i\ge 2$. It was shown in \cite{Sh15} that for any $j$ we have the equalities $\gamma_j^*(G)=D_1(G)$ and $\delta_j^*(G)=D_j(G)$. Thus, $\gamma_j^*(G)=1$ if and only if $G$ is pronilpotent while $\delta_j^*(G)=1$ if and only if $G$ is prosoluble and has Fitting height at most $j$ (we say that a profinite group has Fitting height at most $h$ if it has a normal series of length $h$ all of whose factors are pronilpotent).

The main results in this paper are as follows.
\begin{theorem}
    \label{thm: gamma strongly concise}
 A profinite group $G$ is finite-by-pronilpotent if and only if there is $k$ such that the set of $\gamma_k^*$-values in $G$ has cardinality less than $2^{\aleph_0}$.\end{theorem}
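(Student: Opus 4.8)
The ``only if'' direction is straightforward. Recall first that a profinite group is finite-by-pronilpotent exactly when its nilpotent residual $\gamma_\infty(G)$ is finite (take $N=\gamma_\infty(G)$ in one direction; conversely $G/N$ pronilpotent forces $\gamma_\infty(G)\le N$). So assume $\gamma_\infty(G)$ is finite. By induction on $j\ge 2$ one checks that every $\gamma_j^*$-value lies in $\gamma_\infty(G)$: a $\gamma_2^*$-value is a coprime commutator, and coprime elements commute modulo $\gamma_\infty(G)$ because $G/\gamma_\infty(G)$ is pronilpotent; and a general $\gamma_j^*$-value has the form $[x,g]$ with $x$ a power of a $\gamma_{j-1}^*$-value, so $x\in\gamma_\infty(G)$ by the inductive hypothesis and hence $[x,g]\in\gamma_\infty(G)$. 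Therefore, for every $k\ge 2$, the set of $\gamma_k^*$-values is contained in the finite set $\gamma_\infty(G)$, and in particular has cardinality less than $2^{\aleph_0}$; one may take, say, $k=2$.

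For the ``if'' direction I would argue by induction on $k$, the conclusion being that $K:=\gamma_\infty(G)$ is finite. Along the way I use freely that an infinite profinite group has at least $2^{\aleph_0}$ elements, that $\gamma_j^*(G)=\gamma_\infty(G)$ for all $j\ge 2$ (by \cite{Sh15}), and that the set of $\gamma_j^*$-values is conjugation-invariant, closed under profinite powers, and decreasing in $j$ (so every $\gamma_k^*$-value is a coprime commutator) --- all by routine inductions. The case $k=1$ is trivial: the hypothesis says $|G|<2^{\aleph_0}$, so $G$ is finite. The case $k=2$ is precisely the strong conciseness of coprime commutators proved in \cite{dms21}.

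Now let $k\ge 3$, suppose the set $W$ of $\gamma_k^*$-values satisfies $|W|<2^{\aleph_0}$, and assume for contradiction that $K$ is infinite; the goal is to produce $2^{\aleph_0}$ elements of $W$. The useful observation is that on cyclic subgroups coprimality comes for free: if $t$ is a $\gamma_{k-1}^*$-value and $s\in\overline{\langle t\rangle}$ then $\pi(s)\subseteq\pi(t)$, so $(|s|,|g|)=1$ for every $g\in G$ with $\pi(g)\cap\pi(t)=\emptyset$, and then $[s,g]\in W$. Writing $X$ for the conjugation-invariant, power-closed set of all powers of $\gamma_{k-1}^*$-values --- which satisfies $X\subseteq K$ and $\overline{\langle X\rangle}=\gamma_{k-1}^*(G)=K$ --- we have $W=\{[x,g]:x\in X,\ g\in G,\ (|x|,|g|)=1\}$. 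Thus the inductive step reduces to the following relative version of strong conciseness of coprime commutators: \emph{if $\gamma_\infty(G)$ is infinite and $X\subseteq\gamma_\infty(G)$ is conjugation-invariant, closed under profinite powers and topologically generates $\gamma_\infty(G)$, then $\{[x,g]:x\in X,\ g\in G,\ (|x|,|g|)=1\}$ has cardinality $2^{\aleph_0}$.} I would establish this by refining the proof of \cite{dms21} (which is the case $X=G$) and the general techniques of \cite{dks20}: using the induction hypothesis and a careful analysis of how the smallness of $W$ constrains $G$, one reduces to a ``just infinite'' configuration inside $\gamma_\infty(G)$ --- for instance a closed normal subgroup on which an element of coprime order acts without nontrivial fixed points and with infinite commutator --- and there one exhibits $2^{\aleph_0}$ pairwise distinct values $[x,g]$, using the full freedom of the second entry $g\in G$ where varying $x$ inside $X$ alone does not suffice.

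The hard part is exactly this technical core. Unlike in \cite{dms21}, the first entry of the commutator is confined to the power-closed set $X$ of powers of $\gamma_{k-1}^*$-values, so one cannot simply quote that paper's constructions but must redo the reduction and the exhibition of $2^{\aleph_0}$ values under this constraint; this is plausible since everything takes place inside $\gamma_\infty(G)=\overline{\langle X\rangle}$ and $X$ is stable under conjugation and under powers, but it requires care. One must also cope with the fact that in the ``if'' direction passing to a quotient may \emph{enlarge} the set of $\gamma_k^*$-values, so the reductions cannot be made naively but have to be engineered (e.g.\ through finite normal subgroups, or by keeping track of which values lift), and with the possible presence of nonabelian chief factors in $\gamma_\infty(G)$.
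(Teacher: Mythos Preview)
Your ``only if'' direction is fine. The ``if'' direction, however, has a genuine gap: the entire content of the theorem is packed into your ``relative version of strong conciseness of coprime commutators'', and you do not prove it --- you only say you ``would establish this by refining the proof of \cite{dms21}'' and that it ``requires care''. Your induction on $k$ does not visibly help either: the hypothesis $|G_{\gamma_k^*}|<2^{\aleph_0}$ says nothing about $|G_{\gamma_{k-1}^*}|$ (the latter set is larger), so the inductive hypothesis cannot be applied to $G$ itself, and you give no mechanism for passing to a subgroup or quotient where it could. The talk of ``just infinite configurations'' and ``exhibiting $2^{\aleph_0}$ values'' does not amount to a construction; this is precisely the place where a new idea is needed, and none is supplied.

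The paper takes a different route that avoids both the induction on $k$ and your relative statement. The key point (Proposition~\ref{prop: meta-pronilpotent for gamma}) is that in a \emph{meta-pronilpotent} group the restriction of the first entry to $X$ is illusory: by Lemma~\ref{lem: khukhro}, if $h\in\gamma_\infty(G)$ and $g\in G$ satisfy $(|h|,|g|)=1$, then $[h,g]=[h',g,\overset{k-1}{\ldots},g]$ for some $h'$ in the appropriate Hall subgroup of $\gamma_\infty(G)$, so $[h,g]$ is already a $\gamma_k^*$-value for every $k$. Thus the hypothesis directly gives $|\{[h,g]:h\in\gamma_\infty(G),\,(|h|,|g|)=1\}|<2^{\aleph_0}$, and one finishes via (a slight sharpening of) the $k=2$ argument from \cite{dms21}. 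The passage from a general $G$ to the meta-pronilpotent situation is a separate structural argument: Proposition~\ref{prop: infinite Fitting} shows that under the hypothesis the Fitting subgroup $F$ is infinite; applying the meta-pronilpotent result to $F_2$ then forces $G/F$ to be finite, and an induction on $|G:F|$ (with the simple top quotient handled separately) finishes. Neither of these two ingredients --- the use of Lemma~\ref{lem: khukhro} to remove the restriction to $X$, and the Fitting-series reduction --- appears in your outline.
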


\begin{theorem}
    \label{thm: delta strongly concise}
 A profinite group $G$ is finite-by-(prosoluble of Fitting height at most $k$) if and only if there is $k$ such that the set of $\delta_k^*$-values in $G$ has cardinality less than $2^{\aleph_0}$.
\end{theorem}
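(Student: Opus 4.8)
The plan is to prove the two implications separately. The forward one is elementary; the converse is obtained by induction on $k$, the base case being the strong conciseness of coprime commutators from \cite{dms21}.

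For the forward implication, suppose $N\trianglelefteq G$ is finite with $G/N$ prosoluble of Fitting height at most $k$. By the results of \cite{Sh15} recalled above, $\delta_k^*(G/N)=D_k(G/N)=1$, so every $\delta_k^*$-value of $G/N$ is trivial. A routine induction on the level shows that the image in $G/N$ of a $\delta_j^*$-value of $G$ is a $\delta_j^*$-value of $G/N$: the image of a $\delta_{j-1}^*$-value is a $\delta_{j-1}^*$-value, powers map to powers, and coprimality of orders is inherited by quotients. Hence every $\delta_k^*$-value of $G$ lies in $N$, so there are only finitely many of them, in particular fewer than $2^{\aleph_0}$.

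For the converse it suffices to show that if the set of $\delta_k^*$-values of $G$ has cardinality less than $2^{\aleph_0}$, then $\delta_k^*(G)=D_k(G)$ is finite; for then $G/D_k(G)$ carries the normal series $1\le D_{k-1}(G)/D_k(G)\le\cdots\le D_1(G)/D_k(G)\le G/D_k(G)$ with pronilpotent factors, hence is prosoluble of Fitting height at most $k$, while $D_k(G)$ is a finite normal subgroup of $G$. Arguing the contrapositive, we must prove: if $D_k(G)$ is infinite, then $G$ has at least $2^{\aleph_0}$ distinct $\delta_k^*$-values. We do this by induction on $k$; for $k=1$ this is \cite{dms21}, since the $\delta_1^*$-values are precisely the coprime commutators and $\delta_1^*(G)=\gamma_\infty(G)=D_1(G)$. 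For the inductive step, set $R=D_{k-1}(G)=\delta_{k-1}^*(G)$ and assume $D_k(G)=\gamma_\infty(R)$ is infinite. Then $R$ is infinite, so the induction hypothesis furnishes $2^{\aleph_0}$ distinct $\delta_{k-1}^*$-values; consequently the set $Y$ of their profinite powers is large, and — being closed under passing to powers — it contains the procyclic subgroup $\overline{\langle z\rangle}$ of every $\delta_{k-1}^*$-value $z$, as well as the $p$-part of each of its elements, for every prime $p$. By \cite{Sh15}, $\gamma_\infty(R)=\delta_k^*(G)$ is topologically generated by the $\delta_k^*$-values, that is, by the coprime commutators $[x,y]$ with $x,y\in Y$. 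Running, inside $R$, the mechanism behind the strong conciseness of coprime commutators, the infinitude of $\gamma_\infty(R)$ produces a prime $p$, an element $g\in Y$ of $p'$-order, and a subset $A\subseteq Y$ consisting of $p$-elements on which the map $a\mapsto[a,g]$ takes $2^{\aleph_0}$ values — its fibres being governed by the cosets of $C_G(g)$. Each $[a,g]$ with $a\in A$ is a $\delta_k^*$-value of $G$, and the induction is complete.

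The principal obstacle is precisely that the commutators being counted must have \emph{both} entries in $Y$, a set that is merely closed under powers and is not a subgroup, whereas the standard coprime-commutator arguments move freely among arbitrary elements and arbitrary closed subgroups of the ambient group. One copes with this by first localizing the infinitude of $\gamma_\infty(R)$ at a single prime $p$ and then producing the pair $(a,g)$ so that $\{[a,g]:a\in A\}$ has cardinality $2^{\aleph_0}$, while certifying that $a$ and $g$ — together with whatever powers of $a$ are invoked — all lie in $Y$. Here one exploits both the abundance of $Y$ guaranteed by the induction hypothesis (it already contains $2^{\aleph_0}$ $\delta_{k-1}^*$-values with all their powers, hence many procyclic subgroups and many $p$-parts) and the fact from \cite{Sh15} that $R=\overline{\langle Y\rangle}$ carries enough coprime commutators with entries in $Y$ to generate $\gamma_\infty(R)$. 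Verifying that the infinitude of $\gamma_\infty(R)$ is always witnessed by such a pair inside $Y$ is the technical heart of the argument.
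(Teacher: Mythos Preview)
Your forward implication is correct and matches the paper's. The converse, however, has a genuine gap at precisely the point you yourself flag as ``the technical heart of the argument.''

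Your inductive strategy needs the following: from the infinitude of $\gamma_\infty(R)$ with $R=\delta_{k-1}^*(G)$, produce $2^{\aleph_0}$ coprime commutators $[x,y]$ \emph{with both $x$ and $y$ in $Y$}, the set of powers of $\delta_{k-1}^*$-values. The result of \cite{dms21} gives you $2^{\aleph_0}$ coprime commutators with entries in $R$, not in $Y$; since $Y$ is not a subgroup, there is no mechanism to transport those witnesses into $Y$. Your appeal to the ``abundance'' of $Y$ from the induction hypothesis does not help: knowing that $|Y|\ge 2^{\aleph_0}$ says nothing about the existence of many \emph{coprime pairs} in $Y$ with distinct commutators, and ``enough coprime commutators with entries in $Y$ to generate $\gamma_\infty(R)$'' is a statement about generation, not cardinality. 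The sentence ``the infinitude of $\gamma_\infty(R)$ produces a prime $p$, an element $g\in Y$ \ldots and $A\subseteq Y$'' asserts exactly what needs to be proved. You have located the difficulty but not resolved it, and it is not clear that the \cite{dms21} machinery can be adapted to the constraint $x,y\in Y$ without essentially redoing the whole analysis.

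The paper does \emph{not} induct on $k$. Instead it builds the machinery to handle the prosoluble case of Fitting height $k+1$ directly (Proposition~\ref{prop: poly-pronilpotent for delta}), via a detailed study of long commutators $[x_0,\dots,x_k]$ along a tower of system normalisers $T_0,\dots,T_k$; the key device is that such commutators are forced to be $\delta_k^*$-values by Lemmas~\ref{lem: khukhro} and~\ref{lem: long commutator of y_i are derived values}, so one never has to restrict to the set $Y$. The passage from the poly-pronilpotent case to the general case is then by showing the Fitting subgroup is infinite (Proposition~\ref{prop: infinite Fitting}) and inducting on $|G:F_k(G)|$. This sidesteps entirely the obstacle your approach runs into.
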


In the case where the set of $\gamma_k^*$-values (respectively, $\delta_j^*$-values) in a profinite group is finite of size $m$ the subgroup generated by the set is finite of order bounded in terms of $m$ only. This was established for finite groups in \cite{ast14} and can be extended to profinite groups in a straightforward manner.

\section{Preliminaries}

 For a subgroup $H$ of a profinite group $G$, we denote by $H^G$ the normal closure of $H$ in $G$, that is, the minimal normal subgroup of $G$ containing $H$. For an element $g\in G$ we denote by $g^G$ the conjugacy class of $g$ in $G$. We use the left-normed notation for commutators; for example, $[x,y,z] = [[x,y],z]$. If $X$ and $Y$ are subsets of $G$, then $[X,Y]$ stands for the subgroup generated by commutators $[x,y]$, where $x\in X$ and $y\in Y$. If $X=\{x\}$, then we just write $[x,Y]$.
If $U$ is an open normal subgroup of $G$, we write $U\trianglelefteq_o G$. All subgroups of profinite groups are assumed to be closed. 

One can easily see that if $N$ is a normal subgroup of $G$ and $x$ an element whose image in $G/N$ is a $\gamma_j^*$-value (respectively, a $\delta_j^*$-value), then there exists a $\gamma_j^*$-value $y$ in $G$ (respectively, a $\delta_j^*$-value) such that $x\in yN$. This observation will be used throughout the paper without explicit references.

We begin by quoting a theorem that was already mentioned in the introduction. It was established in \cite{Sh15} for finite groups. The variant for profinite groups is an obvious modification of the finite case.

\begin{theorem}[\cite{Sh15} Theorems 2.1, 2.7]\label{aaaa} Let $G$ be a profinite group. The subgroup $\gamma_k^*(G)$ is trivial if and only if $G$ is pronilpotent. The subgroup $\delta_k^*(G)$ is trivial if and only if $G$ is prosoluble of Fitting height at most $k$.
\end{theorem}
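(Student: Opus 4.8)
The plan is to deduce both equivalences from the equalities $\gamma_j^*(G)=D_1(G)$ and $\delta_j^*(G)=D_j(G)$ recalled above (valid in the relevant range of indices; note $\gamma_1^*(G)=G$, so the first assertion is only meaningful for $k\ge 2$), combined with the description of $\gamma_\infty(G)$ as the smallest closed normal subgroup with pronilpotent quotient and with the definition of Fitting height. An alternative, matching the remark that this is ``an obvious modification of the finite case'', is to pass to the finite continuous quotients $G/N$: a $\gamma_k^*$-value (resp.\ $\delta_k^*$-value) of $G$ projects to one of $G/N$ and, conversely, every $\gamma_k^*$-value (resp.\ $\delta_k^*$-value) of $G/N$ lifts to one of $G$ by the observation in the Preliminaries, so that $\gamma_k^*(G)=1$ if and only if $\gamma_k^*(G/N)=1$ for all $N\trianglelefteq_o G$, and likewise for $\delta_k^*$; one then invokes the finite case directly. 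I will follow the first route.

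For the $\gamma$-statement: by $\gamma_k^*(G)=D_1(G)=\gamma_\infty(G)$ we have $\gamma_k^*(G)=1$ if and only if $\gamma_\infty(G)=1$, and the latter holds precisely when $G$ is pronilpotent, by the very definition of $\gamma_\infty(G)$.

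For the $\delta$-statement, $\delta_k^*(G)=D_k(G)$ reduces us to proving that $D_k(G)=1$ if and only if $G$ is prosoluble of Fitting height at most $k$. Assume $D_k(G)=1$ and put $D_0(G)=G$. Each $D_i(G)$ is characteristic in $G$ (being $\gamma_\infty$ of the previous, characteristic, term), so $G=D_0(G)\ge D_1(G)\ge\cdots\ge D_k(G)=1$ is a normal series of length $k$ whose factors $D_{i-1}(G)/D_i(G)=D_{i-1}(G)/\gamma_\infty(D_{i-1}(G))$ are pronilpotent; hence $G$ is an iterated extension of pronilpotent (in particular prosoluble) groups, so $G$ is prosoluble, and this series witnesses Fitting height at most $k$. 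Conversely, suppose $G$ is prosoluble and $1=N_0\le N_1\le\cdots\le N_k=G$ is a normal series with each $N_i/N_{i-1}$ pronilpotent. One shows $D_i(G)\le N_{k-i}$ for $0\le i\le k$ by induction on $i$: the case $i=0$ is $D_0(G)=G=N_k$, and for the step, using that $\gamma_\infty$ is monotone under inclusion of closed subgroups and that $\gamma_\infty(H)\le K$ whenever $K\trianglelefteq H$ with $H/K$ pronilpotent,
\[
D_i(G)=\gamma_\infty(D_{i-1}(G))\le\gamma_\infty(N_{k-i+1})\le N_{k-i}.
\]
For $i=k$ this gives $D_k(G)\le N_0=1$, i.e.\ $D_k(G)=1$.

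Given the cited equalities and the stated description of $\gamma_\infty$, this is essentially bookkeeping, and I do not expect a serious obstacle. The one point deserving care is the converse in the $\delta$-statement, where one must convert the mere existence of a pronilpotent normal series into a bound on the canonical series $D_i(G)$; this is exactly what the monotonicity of $\gamma_\infty$ supplies. If instead one takes the finite-quotient route, the analogous subtlety is to verify that a profinite group all of whose finite continuous quotients have Fitting height at most $k$ itself has Fitting height at most $k$, which follows from a routine inverse-limit argument (or, once more, from $\delta_k^*(G)=D_k(G)$).
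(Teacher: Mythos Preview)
Your argument is correct. Note, however, that the paper does not actually give a proof of this theorem: it is simply quoted from \cite{Sh15}, with the remark that ``the variant for profinite groups is an obvious modification of the finite case.'' Your first route---deducing both equivalences directly from the equalities $\gamma_k^*(G)=D_1(G)$ and $\delta_k^*(G)=D_k(G)$ together with the monotonicity of $\gamma_\infty$ under inclusion---is a clean self-contained argument using only facts already recorded in the introduction (indeed, the introduction itself sketches exactly this implication in the sentence beginning ``Thus, $\gamma_j^*(G)=1$ if and only if\ldots''). Your alternative route, lifting and projecting $\gamma_k^*$- and $\delta_k^*$-values through finite continuous quotients and invoking the finite case, is precisely the ``obvious modification'' the paper alludes to. There is thus nothing substantive to compare against, and your proof stands on its own.
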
 

The following results will be helpful. 

\begin{proposition}[\cite{dks20} Lemma 2.1]\label{prop: Klopsch}
    Let $\varphi:X\to Y$ be a continuous map between two non-empty profinite spaces that is nowhere locally constant (i.e. there is no non-empty open subset $U\subseteq_o X$ where $\varphi|_U$ is constant). Then $|\varphi(X)|\geq 2^{\aleph_0}$.
\end{proposition}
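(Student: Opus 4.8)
The plan is to produce, inside $\varphi(X)$, an injective copy of the Cantor space $\{0,1\}^{\N}$ by a standard tree (fusion) argument, using the two defining features of profinite spaces: compactness and a basis of clopen sets. Concretely, I would build a family $(U_s)$ of nonempty clopen subsets of $X$ indexed by finite binary strings $s$, arranged so that $U_{s0}$ and $U_{s1}$ are disjoint subsets of $U_s$ whose $\varphi$-images are \emph{disjoint}. Then each infinite branch $\sigma\in\{0,1\}^{\N}$ selects (by compactness) a point $x_\sigma\in\bigcap_n U_{\sigma|n}$, and the assignment $\sigma\mapsto\varphi(x_\sigma)$ will be injective, forcing $|\varphi(X)|\geq 2^{\aleph_0}$.

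For the construction, set $U_\emptyset=X$. Suppose a nonempty clopen $U_s$ has been defined. Applying the hypothesis with the open set $U_s$, the restriction $\varphi|_{U_s}$ is not constant, so there exist $a_0,a_1\in U_s$ with $\varphi(a_0)\neq\varphi(a_1)$. Since $Y$ is a profinite space, distinct points of $Y$ are separated by a clopen set, so choose a clopen $V\subseteq Y$ with $\varphi(a_0)\in V$ and $\varphi(a_1)\notin V$. Put $U_{s0}=U_s\cap\varphi^{-1}(V)$ and $U_{s1}=U_s\cap\varphi^{-1}(Y\setminus V)$. By continuity of $\varphi$ both sets are clopen; they are nonempty (containing $a_0$, resp. $a_1$) and disjoint, and $\varphi(U_{s0})\subseteq V$ while $\varphi(U_{s1})\subseteq Y\setminus V$, so $\varphi(U_{s0})\cap\varphi(U_{s1})=\emptyset$. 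This completes the inductive step, and every $U_s$ is clopen, hence compact.

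Given $\sigma\in\{0,1\}^{\N}$, the chain $U_{\sigma|0}\supseteq U_{\sigma|1}\supseteq\cdots$ consists of nonempty closed subsets of the compact space $X$, so its intersection is nonempty; pick $x_\sigma$ in it. If $\sigma\neq\tau$, let $n$ be least with $\sigma(n)\neq\tau(n)$ and write $s=\sigma|n=\tau|n$. Then $x_\sigma$ and $x_\tau$ lie in $U_{s0}$ and $U_{s1}$ in some order, and since these have disjoint $\varphi$-images we get $\varphi(x_\sigma)\neq\varphi(x_\tau)$. Hence $\sigma\mapsto\varphi(x_\sigma)$ is an injection $\{0,1\}^{\N}\hookrightarrow\varphi(X)$, which gives the claim.

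The argument is mostly bookkeeping: the genuine ingredients are the clopen basis of a profinite space (used to separate points of $Y$ and to keep each $U_s$ clopen, hence compact) and the finite intersection property of the compact space $X$ (used to make each branch actually name a point). The one point that needs care is insisting that the two children of a node have \emph{disjoint images} rather than merely two distinct image points; that stronger property is precisely what survives intersection along a branch and delivers injectivity at the end.
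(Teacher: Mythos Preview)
The paper does not give its own proof of this proposition; it is quoted from \cite{dks20} without argument. Your proof is correct and is essentially the standard one: build a binary tree of nonempty clopen subsets of $X$ whose $\varphi$-images split at each node, use compactness to pick a point along each infinite branch, and observe that the resulting map $\{0,1\}^{\N}\to\varphi(X)$ is injective.
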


\begin{lemma}[\cite{dks20} Lemma 2.2]\label{lem: finite conjugacy class}
    Let $G$ be a profinite group and $g\in G$ be an element whose conjugacy class $g^G$ contains less than $2^{\aleph_0}$ elements. Then $g^G$ is finite.
\end{lemma}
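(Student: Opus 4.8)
The plan is to realise the conjugacy class as a continuous image and apply Proposition~\ref{prop: Klopsch}. Consider the continuous map
\[
\varphi\colon G\longrightarrow G,\qquad x\longmapsto g^x=x^{-1}gx ,
\]
whose image is exactly $g^G$. First I would record that $g^G=\varphi(G)$ is itself a non-empty profinite space: being a continuous image of the compact group $G$ it is compact, and being a subspace of the profinite group $G$ it is Hausdorff and totally disconnected, hence Boolean. Thus Proposition~\ref{prop: Klopsch} is applicable to $\varphi\colon G\to g^G$.

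Next I would analyse the fibres of $\varphi$. A short computation shows that $g^x=g^y$ if and only if $xy^{-1}\in C_G(g)$, so the fibres of $\varphi$ are precisely the cosets of $C_G(g)$; in particular $\varphi$ induces a bijection between the coset space of $C_G(g)$ and $g^G$, whence $|g^G|=[G:C_G(g)]$. Now I would split into two cases according to whether the closed subgroup $C_G(g)$ is open. If $C_G(g)$ is open, then it has finite index in the profinite group $G$ (a cover of the compact group $G$ by cosets of an open subgroup is finite), and so $g^G$ is finite and there is nothing more to prove. If $C_G(g)$ is not open, I claim that $\varphi$ is nowhere locally constant: if $\varphi$ were constant on some non-empty open set $U\subseteq G$, then $U$ would lie inside a single fibre, i.e. inside a single coset of $C_G(g)$, so that coset — and therefore, after translation, $C_G(g)$ itself — would have non-empty interior; but a subgroup of a topological group with non-empty interior is open, a contradiction. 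Hence $\varphi$ is nowhere locally constant, and Proposition~\ref{prop: Klopsch} yields $|g^G|=|\varphi(G)|\geq 2^{\aleph_0}$.

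Combining the two cases gives the contrapositive of what we want: if $|g^G|<2^{\aleph_0}$, then $C_G(g)$ must be open, and therefore $g^G$ is finite. The only points requiring any attention are the verification that $g^G$ is a genuine profinite space (so that Proposition~\ref{prop: Klopsch} applies) and the elementary fact that a subgroup of a topological group with non-empty interior is open; neither of these is a real obstacle, so the argument is short.
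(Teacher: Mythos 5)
Your proof is correct. The paper itself cites this lemma from \cite{dks20} without reproducing a proof, so there is nothing internal to compare against; your argument is the natural one given that Proposition~\ref{prop: Klopsch} is the available tool, and it is presumably how the original authors proceed. You use the conjugation map $x\mapsto g^x$, identify its fibres as the cosets of $C_G(g)$, and observe the clean dichotomy: either $C_G(g)$ is open (hence of finite index by compactness, so $g^G$ is finite), or $\varphi$ is nowhere locally constant (since any open set on which $\varphi$ is constant would sit inside a coset of $C_G(g)$, giving $C_G(g)$ non-empty interior and hence making it open). The auxiliary facts you flag — that $g^G$ is a non-empty profinite space as a closed subset of $G$, and that a subgroup with non-empty interior is open — are both standard and correctly invoked. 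No gaps.
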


It is well-known that if $A$ is a group of automorphisms of a finite group $G$ with $(|A|,|G|)=1$, then $[G,A]=[G,A,A]$.
The following lemma is a stronger version of this result for the case where $G$ is a pronilpotent group.

\begin{lemma}[\cite{KhSh21} Lemma 4.6]\label{lem: khukhro}
    Let $\varphi$ be a coprime automorphism of a pronilpotent group $G$. Then the restriction of the mapping
    $$\theta:x\to [x,\varphi]$$
    to the set $S=\{[g,\varphi]|g\in G\}$ is bijective.
\end{lemma}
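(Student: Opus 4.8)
The plan is to reduce the whole statement to a single injectivity assertion and then prove that by induction on $|G|$, the heart of the argument being a short telescoping computation that uses coprimality. Note first that bijectivity here is equivalent to injectivity: we trivially have $\theta(S)\subseteq\theta(G)=S$, and since $\theta\colon x\mapsto x^{-1}x^\varphi$ is a continuous self-map of $G$ with $S=\theta(G)$ closed, a routine inverse-limit argument over the $\varphi$-invariant open normal subgroups of $G$ (an inverse limit of bijections of finite sets being a bijection, and $\theta$ commuting with all projections) lets us assume $G$ is finite and nilpotent, with $\varphi$ acting through a finite cyclic group of some order $n$ coprime to $|G|$.

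Next I would record the one elementary fact needed about $\theta$: its fibres on $G$ are exactly the right cosets of $C_G(\varphi)$. Indeed $[cg,\varphi]=[g,\varphi]$ whenever $c\in C_G(\varphi)$, while $[g,\varphi]=[g',\varphi]$ rearranges to $g'g^{-1}=(g'g^{-1})^\varphi$, i.e.\ $g'g^{-1}\in C_G(\varphi)$. Hence $|S|=[G:C_G(\varphi)]$, and injectivity of $\theta|_S$ is equivalent to the statement that $S$ is a right transversal of $C_G(\varphi)$ in $G$.

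Now I would induct on $|G|$. If $G=1$ there is nothing to prove. Otherwise $Z:=Z(G)$ is a nontrivial $\varphi$-invariant central subgroup, and by induction the analogous map for $\bar G:=G/Z$ is injective on the corresponding set $\bar S$. Take $s,t\in S$ with $\theta(s)=\theta(t)$. Reducing modulo $Z$ gives $\bar\theta(\bar s)=\bar\theta(\bar t)$, hence $\bar s=\bar t$, so $t=zs$ for some $z\in Z$. Since $z$ is central one computes $\theta(zs)=[z,\varphi]\,\theta(s)$, so $[z,\varphi]=1$, i.e.\ $z^\varphi=z$. Writing $s=[g,\varphi]$ and $t=[g',\varphi]$ for arbitrary preimages $g,g'\in G$, we get $g'^{-1}g'^\varphi=z\,g^{-1}g^\varphi$; applying $\varphi^i$ (and using $z^{\varphi^i}=z$) for $0\le i\le n-1$ and multiplying the resulting $n$ identities in order, both sides telescope (using $\varphi^n=1$, and pulling the central $z$ to the front on the right), giving $1=z^n$. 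As $(|z|,n)=1$ this forces $z=1$, so $t=s$. This proves $\theta|_S$ is injective, which by the previous paragraph closes the induction.

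I expect the main obstacle to be exactly this injectivity/transversal claim. The set $S$ is not a subgroup, and $C_G(\varphi)\cap[G,\varphi]$ can be nontrivial (already when $\varphi$ acts coprimely on a Heisenberg group), so one cannot finish by a naive intersection argument; one really has to descend through the centre and then invoke the coprime telescoping identity. The remaining ingredients — the fibre description of $\theta$ and the reduction to finite $G$ — are routine.
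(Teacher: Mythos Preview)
Your proof is correct. The paper does not prove this lemma from scratch: it quotes the injectivity of $\theta|_S$ from \cite{KhSh21} and then remarks that, since $S$ is closed and $\theta(S)\subseteq S$, surjectivity follows---the implicit argument being exactly your inverse-limit reduction (the cited injectivity applies in every finite nilpotent quotient, an injective self-map of a finite set is bijective, and one passes to the limit). What you add beyond the paper is a self-contained proof of the injectivity itself: the induction through $Z(G)$, together with the telescoping identity
\[
\prod_{i=0}^{n-1}\bigl(g^{-1}g^{\varphi}\bigr)^{\varphi^{i}}=g^{-1}g^{\varphi^n}=1
\]
used to force $z^n=1$ and hence $z=1$, is a clean and standard argument that simply replaces the external citation. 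The auxiliary observation that the $\theta$-fibres are cosets of $C_G(\varphi)$ (so $|S|=[G:C_G(\varphi)]$) is correct and gives a nice interpretation, though you do not actually need it for the induction.
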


The original version of Lemma \ref{lem: khukhro} states that the continuous map $\theta$ is injective when restricted to the set $S$.
However, since $S$ is closed and $\theta(S)\subseteq S$, the surjectivity of $\theta$ follows immediately.

The following is a profinite version of Lemma 2.4 in \cite{Sh15}.
It will be used together with Lemma \ref{lem: khukhro} in order to show that certain elements of a group $G$ are $\delta_k^*$-values.

\begin{lemma}\label{lem: long commutator of y_i are derived values}
    Let $G$ be a profinite group and $g_1,\ldots,g_k$ be $\delta^*_{k-1}$-values in $G$.
    Suppose that $g_1,\ldots, g_k\in N_G(H)$ for a subgroup $H\leq G$ with $(|H|,|g_i|)=1$ for every $i\in \{1,\ldots,k\}$.
    Then, for every $h\in H$, the element $[h,g_1,\ldots,g_k]$ is a $\delta_{k}^*$-value.
\end{lemma}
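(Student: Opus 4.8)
The plan is to prove the statement by induction on $k$, with the coprime-action fact that $[G,A]=[G,A,A]$ for a finite group $G$ and a coprime group of automorphisms $A$ — here upgraded to pronilpotent $G$ via Lemma~\ref{lem: khukhro} — serving as the engine. First I would set up the base case $k=1$: if $g_1$ is a $\delta_0^*$-value (that is, an arbitrary element) normalizing $H$ with $(|H|,|g_1|)=1$, then for $h\in H$ the element $[h,g_1]$ is by definition a $\delta_1^*$-value, since $h$ and $g_1$ both lie in the $\delta_0^*$-closed set $G$ and are coprime. Actually one should be slightly careful: $[h,g_1]$ must be exhibited with coprime arguments; taking $x=h\in H$ (so $|x|$ divides $|H|$) and $y=g_1$ gives $(|x|,|y|)=1$, so $[h,g_1]$ is a $\delta_1^*$-value. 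That settles $k=1$.

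For the inductive step, suppose the claim holds for $k-1$ and let $g_1,\dots,g_k$ be $\delta_{k-1}^*$-values normalizing $H$, each coprime to $H$. Fix $h\in H$ and write $z=[h,g_1]\in H$ (since $g_1$ normalizes $H$). The idea is to apply the inductive hypothesis to the element $z\in H$ and the $\delta_{k-1}^*$-values $g_2,\dots,g_k$ — but the induction is stated for $\delta^*_{k-2}$-values producing $\delta^*_{k-1}$-values, so instead I want to feed the whole list $g_1,\dots,g_k$ into a mechanism that peels off one commutator at a time. Here is the cleaner route. Since $g_1$ is a coprime automorphism of the subgroup $H$ (which need not be pronilpotent, so Lemma~\ref{lem: khukhro} does not yet apply directly to $H$), I would first reduce to the pronilpotent situation: replace $H$ by $K:=[H,g_1]$, which is normalized by all of $g_1,\dots,g_k$ (each $g_i$ normalizes $H$ and commutes-free structure is preserved), is still coprime to each $g_i$, and crucially $g_1$ acts on $K$ with $[K,g_1]=[H,g_1,g_1]=[H,g_1]=K$ by the coprime identity. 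The point of passing to $K$ is that we can iterate: by Lemma~\ref{lem: khukhro} applied to the coprime automorphism $g_1$ of the relevant pronilpotent quotients, every element of $K$ of the form $[h,g_1]$ with $h\in H$ can be written as $[w,g_1]$ with $w$ already in $K$.

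The heart of the argument — and the step I expect to be the main obstacle — is to massage $[h,g_1,\dots,g_k]$ into the literal form $[x,y]$ with $x$ a power of a $\delta_{k-1}^*$-value, $y$ a $\delta_{k-1}^*$-value, and $(|x|,|y|)=1$. I would proceed as follows: using Lemma~\ref{lem: khukhro}, choose $u\in H$ with $[h,g_1]=[u,g_1]$ and moreover, iterating the lemma down the list, arrange elements $u_1,\dots,u_{k-1}$ so that $[h,g_1,\dots,g_i]=[u_i,g_1,\dots,g_i]$ lies in a pronilpotent section on which $g_1,\dots,g_i$ act coprimely; the surjectivity half of Lemma~\ref{lem: khukhro} is exactly what lets us "pull back" each commutator into the coprime-stable set. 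At the final stage, set $x=[h,g_1,\dots,g_{k-1}]$, which by the inductive hypothesis (applied to the list $g_1,\dots,g_{k-1}$ of $\delta_{k-1}^*$-values normalizing $H$) is a $\delta_{k-1}^*$-value; it lies in $H$ (as each $g_i$ normalizes $H$), hence $|x|$ divides $|H|$, hence $(|x|,|g_k|)=1$ since $(|H|,|g_k|)=1$. Then $[x,g_k]=[h,g_1,\dots,g_k]$ is a commutator of the $\delta_{k-1}^*$-value $x$ (which is trivially a power of itself) with the $\delta_{k-1}^*$-value $g_k$, with coprime orders, hence a $\delta_k^*$-value by definition. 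The delicate points to verify carefully are (a) that the intermediate subgroups $[H,g_1,\dots,g_i]$ remain normalized by all of $g_1,\dots,g_k$ and coprime to each $g_j$ — this follows because $[H,g_1,\dots,g_i]\le H$ and conjugation by $g_j$ permutes such commutators — and (b) that Lemma~\ref{lem: khukhro} is legitimately applicable, i.e. that we are working inside genuinely pronilpotent sections; one secures this by noting that $H$ itself has a pronilpotent-by-something structure is \emph{not} needed — rather, at each stage we only ever apply Lemma~\ref{lem: khukhro} to $g_1$ acting on $[H,g_1]\le H$, and the coprime identity $[H,g_1]=[H,g_1,g_1]$ for the profinite coprime action plus a limit argument over finite quotients supplies what we need. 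The actual proof will likely be a direct induction that simply invokes the inductive hypothesis for $g_1,\dots,g_{k-1}$ to see $x:=[h,g_1,\dots,g_{k-1}]$ is a $\delta_{k-1}^*$-value in $H$, and then observes $[x,g_k]$ is a $\delta_k^*$-value by coprimality of $|x|$ with $|g_k|$ — Lemma~\ref{lem: khukhro} being invoked only to handle the base-level bookkeeping that $[h,g_1]$ is genuinely realized inside the coprime-action framework. I would write it in that streamlined form.
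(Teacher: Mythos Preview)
Your final ``streamlined'' argument is the right one, but it contains the very gap you flagged and never closed. You note correctly that the inductive hypothesis (the case $k-1$) concerns $\delta_{k-2}^*$-values, yet at the end you invoke it for ``the list $g_1,\dots,g_{k-1}$ of $\delta_{k-1}^*$-values''. The missing observation is the trivial nesting $G_{\delta_j^*}\subseteq G_{\delta_{j-1}^*}$ for all $j\ge 1$, which follows by a separate one-line induction straight from the definition (if $a=[x,y]$ with $x,y$ powers of $\delta_{j-1}^*$-values and coprime orders, then by the inductive step $x,y$ are also powers of $\delta_{j-2}^*$-values, so $a$ is a $\delta_{j-1}^*$-value). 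Once that is on the table, $g_1,\dots,g_{k-1}$ are in particular $\delta_{k-2}^*$-values, the inductive hypothesis gives that $x:=[h,g_1,\dots,g_{k-1}]\in H$ is a $\delta_{k-1}^*$-value, and since $(|x|,|g_k|)=1$ the commutator $[x,g_k]$ is a $\delta_k^*$-value. Done.

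Everything involving Lemma~\ref{lem: khukhro} and the subgroups $[H,g_1,\dots,g_i]$ is a red herring: $H$ is not assumed pronilpotent, the lemma does not apply, and nothing of the sort is needed. The paper does not write out a proof but simply cites this as the profinite version of \cite[Lemma~2.4]{Sh15}, whose proof is exactly the short induction above together with the nesting $G_{\delta_j^*}\subseteq G_{\delta_{j-1}^*}$.
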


The next result is a profinite version of Lemma 2.4 in \cite{ast14}. By a meta-pronilpotent group we mean a profinite group $G$ having a normal pronilpotent subgroup $N$ such that $G/N$ is pronilpotent.

\begin{lemma}\label{lem: Hall acts on Sylow}
    Let $G$ be a meta-pronilpotent group. Then $\gamma_\infty(G)=\prod_p [K_p,H_{p'}]$, where $K_p$ is a Sylow $p$-subgroup of $\gamma_\infty (G)$ and $H_{p'}$ is a Hall $p'$-subgroup of $G$.
\end{lemma}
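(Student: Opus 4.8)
The plan is to reduce to the case where $\gamma_\infty(G)$ is a pro-$p$ group and then to analyse the coprime action of a Hall $p'$-subgroup on it. Since $G$ is meta-pronilpotent, $\gamma_\infty(G)$ is contained in a pronilpotent normal subgroup, hence is itself pronilpotent, so $\gamma_\infty(G)=\prod_p K_p$ with $K_p$ its Sylow $p$-subgroup; each $K_p$ is characteristic in $\gamma_\infty(G)$ and therefore normal in $G$. As the $K_p$ pairwise commute, $\prod_p[K_p,H_{p'}]$ is a closed subgroup of $\gamma_\infty(G)$, and it suffices to show $[K_p,H_{p'}]=K_p$ for every $p$. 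Fixing $p$ and passing to $G/\prod_{q\neq p}K_q$, which is again meta-pronilpotent and has nilpotent residual isomorphic to $K_p$, and using that the normal pro-$p'$ subgroup $\prod_{q\neq p}K_q$ lies inside every Hall $p'$-subgroup of $G$ while $[K_p,H_{p'}]\cap\prod_{q\neq p}K_q=1$, I may assume from the outset that $\gamma_\infty(G)=K_p$ is pro-$p$; the assertion then reads $\gamma_\infty(G)=[\gamma_\infty(G),H_{p'}]$.

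In this situation, note that $G$ is prosoluble; pick a Sylow $p$-subgroup $P\geq K_p$ of $G$, so that $G=PH_{p'}$ by Hall theory for prosoluble profinite groups. Since $G/\gamma_\infty(G)=G/K_p$ is pronilpotent, the images of $P$ and $H_{p'}$ there are complementary direct factors, so $[P,H_{p'}]\leq K_p$. The next point is to show that $M:=[K_p,H_{p'}]$ is normal in $G$. It is normal in $K_p$ by the identity $[ab,h]=[a,h]^b[b,h]$ (applied to the conjugation action of $H_{p'}$ on the normal subgroup $K_p$) and it is clearly $H_{p'}$-invariant, so $M\trianglelefteq K_pH_{p'}=K_p\rtimes H_{p'}$. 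For invariance under $P$ I would use $[P,H_{p'}]\leq K_p$ to get $H_{p'}^x\subseteq H_{p'}K_p$ and $H_{p'}\subseteq H_{p'}^xK_p$ for $x\in P$, so that $H_{p'}$ and $H_{p'}^x$ are complements of $K_p$ in $K_p\rtimes H_{p'}$ and hence conjugate by some $k\in K_p$; then $M^x=[K_p,H_{p'}^x]=[K_p,H_{p'}^k]=M^k=M$.

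Finally I would prove that $\overline G=G/M$ is pronilpotent; this gives $\gamma_\infty(G)\leq M\leq K_p=\gamma_\infty(G)$, hence $[K_p,H_{p'}]=M=K_p$, as wanted. In $\overline G$ the image $\overline{K_p}$ is a normal pro-$p$ subgroup with $[\overline{K_p},\overline{H_{p'}}]=1$, and $[\overline P,\overline{H_{p'}}]\leq\overline{K_p}\leq\overline P$, so $\overline P\trianglelefteq\overline G$ and $[\overline P,\overline{H_{p'}},\overline{H_{p'}}]=1$. Coprime action of the pro-$p'$ group $\overline{H_{p'}}$ on the pro-$p$ group $\overline P$ then forces $[\overline P,\overline{H_{p'}}]=[\overline P,\overline{H_{p'}},\overline{H_{p'}}]=1$, so $\overline G=\overline P\times\overline{H_{p'}}$. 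Both factors are pronilpotent ($\overline P$ is pro-$p$, and $\overline{H_{p'}}\cong H_{p'}$ is isomorphic to a closed subgroup of the pronilpotent group $G/K_p$), hence so is $\overline G$.

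The main obstacle is the normality of $M=[K_p,H_{p'}]$ in $G$: the bookkeeping with the conjugates $H_{p'}^x$ and the appeal to coprime conjugacy of complements in $K_p\rtimes H_{p'}$ is the only genuinely delicate step, the remainder being standard coprime-action and Hall-subgroup manipulations transported from the finite case (cf. Lemma 2.4 in \cite{ast14}).
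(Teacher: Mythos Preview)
Your proof is correct. The paper itself does not prove this lemma: it is stated without proof as the profinite analogue of Lemma~2.4 in \cite{ast14}, so there is no argument in the paper to compare against beyond an implicit ``extend the finite case by inverse limits.'' Your approach instead gives a self-contained profinite argument, and all the steps check out: the reduction to $\gamma_\infty(G)=K_p$ pro-$p$ via Dedekind's modular law, the use of $[P,H_{p'}]\le K_p$ from pronilpotence of $G/K_p$, and the final coprime-action step $[\overline P,\overline{H_{p'}}]=[\overline P,\overline{H_{p'}},\overline{H_{p'}}]$ are all standard. The only point worth a remark is the one you flag yourself, namely the $P$-invariance of $M=[K_p,H_{p'}]$: your argument that $H_{p'}$ and $H_{p'}^x$ are two closed complements of the normal pro-$p$ subgroup $K_p$ in $K_pH_{p'}$, hence conjugate by the profinite Schur--Zassenhaus theorem (Ribes--Zalesskii, Theorem~2.3.15), is sound and is indeed the natural way to handle this in the profinite category.
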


For a general group word $w$, the set $G_w$ of $w$-values of a profinite group $G$ is always closed in $G$.
We will show that the same is true for the sets of $\gamma_k^*$ and $\delta_k^*$-values. 

\begin{proposition}\label{lem: everything is closed}
    Let $S_1,\ldots,S_t$ be closed subsets of a profinite group $G$.
    Then the set
    $$C=\{(g_1,\ldots, g_t)\in S_1\times \cdots \times S_t \mid (|g_i|,|g_{i+1}|)=1 \text{ for all }i=1,\ldots, t-1\}$$
    is closed in $S_1\times \cdots \times S_t$. Furthermore, the sets $G_{\gamma_k^*}$ and $G_{\delta_k^*}$ are closed in $G$.
\end{proposition}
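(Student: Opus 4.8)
The plan is to handle the two assertions in sequence, first establishing that the coprimeness condition cuts out a closed subset of a product of closed sets, and then bootstrapping this to the sets $G_{\gamma_k^*}$ and $G_{\delta_k^*}$ by induction on $k$, using the fact that the commutator map and the "taking powers" operation behave well with respect to the profinite topology.

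For the first claim, the key observation is that for a fixed prime $p$, the set $T_p = \{g \in G \mid p \mid |g|\}$ is closed: indeed, $g$ has order divisible by $p$ if and only if the closed subgroup $\overline{\langle g\rangle}$ surjects onto a group of order $p$, equivalently $g \notin \bigcup_{U \trianglelefteq_o G,\, p \nmid [G:U]} U$, and this union is open (it is a union of open subgroups, though one must check it is actually open as a set — better: $T_p$ is the preimage of the non-identity part under $g \mapsto g^{|G|_{p'}}$ suitably interpreted, or simply note $g\mapsto$ "the $p$-part of $\overline{\langle g\rangle}$ is nontrivial" is a closed condition because the $p$-part of $|g|$ is nontrivial iff $g^{n}\neq 1$ for the supernatural number $n = |G|_{p'}$, and $\{g : g^n \neq 1\}$... }. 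The cleanest route: $(|g_i|,|g_{i+1}|)=1$ fails precisely when there is a prime $p$ with $p\mid |g_i|$ and $p\mid|g_{i+1}|$, so the complement of $C$ in $S_1\times\cdots\times S_t$ is $\bigcup_i\bigcup_p (S_i\cap T_p)\times(S_{i+1}\cap T_p)\times(\text{rest})$; so it suffices to show each $T_p$ is closed and then argue this union is open. Since for fixed $p$ there are only finitely many "relevant" open normal subgroups controlling divisibility by $p$ up to any bounded power, one shows $T_p$ is closed by exhibiting $G\setminus T_p = \{g : p\nmid |g|\}$ as open: $p \nmid |g|$ iff $\overline{\langle g\rangle}$ has no quotient of order $p$, iff $g$ lies in some open normal subgroup $U$ with $p\nmid [G:U]$ together with... actually $\{g : p \nmid |g|\}= \bigcup\{U : U\trianglelefteq_o G,\ p\nmid [G:U]\}$ is open as a union of open sets, so $T_p$ is closed. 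The remaining point is that $\bigcup_p T_p = \{g : g\neq 1\}$ over all primes, but we do not need the full union to be open; rather, for the complement of $C$ we must see that $\bigcup_p (S_i\cap T_p)\times(S_{i+1}\cap T_p)$ is open in $S_i\times S_{i+1}$. This follows because its complement $\{(g,h)\in S_i\times S_{i+1} : (|g|,|h|)=1\}$ is closed: if $(|g|,|h|)=1$ then there is an open normal $U$ with $\pi([G:U])\supseteq\pi(g)$ and $g^{[G:U]}=1$... — more simply, $(|g|,|h|)=1$ iff for every prime $p$ at least one of $g,h$ lies in $G\setminus T_p$, and on the compact space $S_i\times S_{i+1}$ one checks directly that this set is closed by a standard limit argument (a convergent net of coprime pairs has coprime limit, since if the limit pair were divisible by a common $p$ then so would be pairs far along the net, as $T_p$ is closed and open complement). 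This is the step I expect to require the most care.

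Granting the first part, the sets $G_{\gamma_k^*}$ and $G_{\delta_k^*}$ are dealt with by induction on $k$. For $k=1$ (resp.\ $k=0$) the set is all of $G$, which is closed. For the inductive step, suppose $G_{\gamma_{k-1}^*}$ is closed. Then the set $X$ of profinite powers of $\gamma_{k-1}^*$-values is the image of the continuous map $\widehat{\Z}\times G_{\gamma_{k-1}^*}\to G$, $(\lambda, x)\mapsto x^\lambda$ (using that $\widehat\Z$ is profinite and exponentiation is continuous), hence $X$ is compact, so closed. Now $G_{\gamma_k^*} = \{[x,g] : x\in X,\ g\in G,\ (|x|,|g|)=1\}$ is the image of the closed set $C\subseteq X\times G$ (closed by the first part, with $t=2$, $S_1=X$, $S_2=G$) under the continuous commutator map, hence again compact and closed. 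The $\delta_k^*$ case is identical with $Y$ in place of $X$ and the pair $(x,y)\in Y\times Y$ in place of $(x,g)$. The only thing to verify here is continuity of $(\lambda,x)\mapsto x^\lambda$ on $\widehat\Z\times G$, which is standard for profinite groups, and that the projection of a compact set is compact, which is automatic — so once the first part is in hand, the rest is routine.

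One should also record the trivial but needed remark that a continuous image of a compact set in a Hausdorff space is closed, which is what converts "compact" into "closed" at each step; this is why we never need to argue closedness of images directly.
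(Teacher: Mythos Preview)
Your argument for the first assertion contains a genuine error: you claim that $T_p = \{g \in G : p \mid |g|\}$ is closed and that its complement $\{g : p \nmid |g|\}$ is open, but the situation is exactly the reverse. Your proposed identity $\{g : p \nmid |g|\} = \bigcup\{U : U \trianglelefteq_o G,\ p \nmid [G:U]\}$ is false, since the right-hand side contains $U = G$ and therefore equals all of $G$. In fact $T_p$ is \emph{open}: if $p \mid |g|$ then there is some $N\trianglelefteq_o G$ with $p \mid |gN|$ in $G/N$, and then the entire coset $gN$ lies in $T_p$. Dually, $\{g : p \nmid |g|\} = \bigcap_{N \trianglelefteq_o G}\{g : p \nmid |gN|\}$ is an intersection of clopen sets, hence closed but in general not open (in $G = \Z_p$ the identity is a $p'$-element with no $p'$-neighbourhood).

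Fortunately this correction makes your strategy cleaner rather than harder. With $T_p$ open, each set $S_1 \times \cdots \times (S_i \cap T_p) \times (S_{i+1} \cap T_p) \times \cdots \times S_t$ is open in the product, and the complement of $C$ is the union of these over all $i$ and all primes $p$, hence open; thus $C$ is closed with no limit argument needed. (Your net argument also becomes valid once one uses openness of $T_p$: if the limit pair lies in the open set $T_p \times T_p$, then so do tails of the net.) The paper takes the dual route, showing directly that $S_{p'} = \{g \in S : p \nmid |g|\}$ is closed and expressing the set of coprime pairs in $A \times B$ as $\bigcap_p \big((A \times B_{p'}) \cup (A_{p'} \times B)\big)$, an intersection of closed sets. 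Your inductive treatment of $G_{\gamma_k^*}$ and $G_{\delta_k^*}$ via continuity of $(\lambda,x)\mapsto x^\lambda$ and of the commutator map is correct and is exactly what the paper does.
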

\begin{proof}
    Let $\mathcal{P}$ be the set of all primes and $p\in \mathcal{P}$.
    First notice that for every closed subset $S$ of $G$ the set
    $$S_{p'}=\{g\in S \mid p\notin \pi(g)\}$$
    is closed since $S_{p'}=\cap_{N\trianglelefteq_o G} S_{p'}N$.
    Also, the set $S^{\widehat{\mathbb{Z}}}=\{g^\lambda \mid g\in S, \lambda \in \widehat{\mathbb{Z}}\}$ is the image under the continuous map $f(g,\lambda)=g^\lambda$ of the compact set $S\times\widehat{\mathbb{Z}}$, so it is closed too.
    
    Let now $A,B$ be subsets of $G$. We claim that the set
    \begin{equation}\label{eq: closed set S}
    R_{A,B}=\bigcap_{p\in \mathcal{P}}\big((A\times  B_{p'}) \cup (A_{p'}\times B)\big)    
    \end{equation}
    is exactly the set of elements $(a,b)\in A\times B$ with $|a|$ and $|b|$ coprime.
    On the one hand, if $|a|$ and $|b|$ are coprime then $(a,b)\in (A\times  B_{p'}) \cup (A_{p'}\times B)$ for every $p\in \mathcal{P}$, because, if $b\in B\smallsetminus B_{p'}$, then $a\in A_{p'}$ necessarily.
    On the other hand, if $(a,b)\in R_{A,B}$ and a prime $p$ divides $|a|$, then $(a,b)\in (A, B_{p'})$ so $p$ does not divide $|b|$, and the claim follows.
    Notice now that if $A$ and $B$ are closed, the set $R_{A,B}$ is an intersection of closed subsets of $G\times G$ so it is closed too.
    
    It is now easy to prove by induction on $k$ that the sets $G_{\gamma_k^*}$, $G_{\delta_k^*}$ are closed: just note that $G_{\gamma_k^*}$ is exactly the set $R_{A,B}$ in (\ref{eq: closed set S}) with $A=G_{\gamma_{k-1}^*}^{\widehat{\mathbb{Z}}}$, $B=G$, whereas $G_{\delta_k^*}$ is the set $R_{A,B}$ in (\ref{eq: closed set S}) with $A=B=G_{\delta_{k-1}^*}^{\widehat{\mathbb{Z}}}$.
    
    To prove that the set $C$ is closed in $S_1\times\cdots \times S_t$, it suffices to notice that by the above arguments the set
    $$C_i=S_1\times \cdots \times S_{i-1}\times R_{S_i,S_{i+1}} \times S_{i+2}\times \cdots \times S_t$$
    is closed for every $i\in \{1,\ldots,t-1\}$ and $C=\bigcap_{i=1}^{t-1} C_i$.
\end{proof}

\section{Coprime commutators in prosoluble groups}
In this section we will prove some particular cases of Theorems \ref{thm: gamma strongly concise} and \ref{thm: delta strongly concise}.

First we will prove Theorem \ref{thm: gamma strongly concise} in the case when the profinite group $G$ is meta-pronilpotent.
The second part of the section will be devoted to the proof of Theorem \ref{thm: delta strongly concise} in the case where $G$ is poly-pronilpotent.

We first start with a general reduction.

\begin{lemma}\label{lem: reduce to abelian case}
    Let $\phi$ be a map that associates to every group $G$ a normal subset $\phi(G)\subseteq G$.
    Let $G$ be a profinite group with $|\phi(G)|< 2^{\aleph_0}$ and let $K$ be a pronilpotent subgroup of $\langle \phi(G)\rangle$ generated by a subset of $\phi(G)$.
    If $K/K'$ is finite, then $K$ is finite.
\end{lemma}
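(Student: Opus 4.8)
The plan is to show that $K$ is topologically generated by a \emph{finite} subset of $\phi(G)$, then to use the smallness of $\phi(G)$ to force the centre $Z(K)$ to be open in $K$, and finally to combine this with the hypothesis that $K/K'$ is finite by means of Schur's theorem.

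First I would reduce to a finite generating set. Since $K$ is pronilpotent, write $K=\prod_p K_p$ as the direct product of its Sylow subgroups; then $K'=\prod_p K_p'$ and $\Phi(K)=\prod_p\Phi(K_p)\supseteq\prod_p K_p'=K'$. Hence $K/\Phi(K)$ is a quotient of the finite group $K/K'$, so it is finite, and in particular $K$ is topologically finitely generated. As $K$ is the closure of $\langle S\rangle$ for some $S\subseteq\phi(G)$, finitely many elements $s_1,\dots,s_d\in S$ have images generating $K/\Phi(K)$; the Frattini argument (every proper closed subgroup of a profinite group lies in a maximal open subgroup, while $\Phi(K)$ lies in every maximal open subgroup) then gives $\overline{\langle s_1,\dots,s_d\rangle}=K$. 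Pronilpotency is used essentially here, since for a general profinite group $K/\Phi(K)$ need not be finite when $K/K'$ is.

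Next I would prove that $Z(K)$ is open in $K$. For each $i$ the conjugacy class $s_i^{G}$ is contained in the normal set $\phi(G)$, so $|s_i^{G}|\le|\phi(G)|<2^{\aleph_0}$; by Lemma~\ref{lem: finite conjugacy class} the class $s_i^{G}$ is finite, i.e.\ $C_G(s_i)$ is open in $G$, and therefore $C_K(s_i)=C_G(s_i)\cap K$ is open in $K$. Consequently $Z:=\bigcap_{i=1}^{d}C_K(s_i)$ is open in $K$. Since $Z$ commutes with every $s_i$ it commutes with $\langle s_1,\dots,s_d\rangle$, and as $C_K(Z)$ is closed it must contain the closure of this subgroup, which is $K$; hence $Z\le Z(K)$ and $Z(K)$ is open, i.e.\ $K/Z(K)$ is finite.

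Finally, Schur's theorem — valid in the profinite setting, since the abstract derived subgroup of a group with finite central index is finite and hence closed — shows that $K'$ is finite, and together with the hypothesis that $K/K'$ is finite this forces $K$ to be finite. The main obstacle is the middle step: converting the cardinality bound on $\phi(G)$ into openness of the centralizers of a finite generating set of $K$ (through Lemma~\ref{lem: finite conjugacy class}); the first step is routine pronilpotent Frattini theory and the last is classical.
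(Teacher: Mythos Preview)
Your proof is correct and follows essentially the same path as the paper's: reduce to a finite generating set via $K'\le\Phi(K)$, use Lemma~\ref{lem: finite conjugacy class} to get each generator's centralizer open, intersect to see $Z(K)$ has finite index, and finish with Schur's theorem. The only difference is that you spell out more carefully why $\bigcap_i C_K(s_i)\le Z(K)$ (via closedness of centralizers), whereas the paper simply writes $C_G(K)=\bigcap_{s\in S}C_G(s)$ and intersects with $K$.
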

\begin{proof}
    Since $K$ is pronilpotent, $K' \leq \Phi(K)$, where $\Phi(K)$ stands for the Frattini subgroup of $K$.
    Thus $K/\Phi(K)$ is finite, and hence we can find a finite subset $S$ of $\phi(G)$ generating $K$.
    Since $\phi(G)$ is a normal subset of $G$, by Lemma \ref{lem: finite conjugacy class} each of these generators has finitely many conjugates in $G$, so in particular $|G:C_G(s)|\leq \infty$ for every $s\in S$. Since $C_G(K)=\cap_{s\in S} C_G(s)$,
   this implies that $Z(K)= K\cap C_G(K)$ has finite index in $K$, and by Schur's theorem $K'$ is finite.
\end{proof}

We will use Lemma \ref{lem: reduce to abelian case} for $\phi=\gamma_k^*$ or $\phi=\delta_k^*$, but it could be applied to other cases, such as group word maps or uniform (anti-coprime) commutators (see \cite{dms21} or  \cite{dms22}).

\subsection{The meta-pronilpotent case for \texorpdfstring{$\gamma_k^*$}{Y*k}}

The next lemma is given without a proof since this is an obvious modification of Lemma 3.1 in \cite{dms21}.
\begin{lemma}\label{lem: 3.1 of DeMoSh}
    Let $G$ be a profinite group that is a product of a subgroup $H$ and a normal pronilpotent subgroup $Q$ with $(|H|,|Q|)=1$.
    Suppose that
    $$
    |\{[h,q]\mid h\in H, q\in Q\}|<2^{\aleph_0}.
    $$
    Then $[H,Q]$ is finite.
\end{lemma}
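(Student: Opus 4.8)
The plan is to prove that $C:=[H,Q]$ is finite. First I would record some structure. Since $Q\trianglelefteq G$ is pronilpotent and $(|H|,|Q|)=1$, coprime action theory gives $Q=[Q,H]\,C_Q(H)$, and $C_Q(H)$ normalizes $[Q,H]$ because $[q,h]^{c}=[q^{c},h]$ for $c\in C_Q(H)$; hence $C=[Q,H]\trianglelefteq G$. Also $C\le Q$ is pronilpotent and, by coprimality, $[C,H]=[Q,H,H]=[Q,H]=C$. Next I would extract from the hypothesis: for each $h\in H$ one has $|Q:C_Q(h)|=|\{[q,h]:q\in Q\}|=|\{[h,q]:q\in Q\}|\le|\{[h',q']:h'\in H,\ q'\in Q\}|<2^{\aleph_0}$, and since the coset space $C_Q(h)\backslash Q$ is a homogeneous profinite space it must be finite (an infinite one would be perfect, hence of cardinality $\ge 2^{\aleph_0}$); thus $C_Q(h)$ is open in $Q$ and, by standard properties of coprime cyclic actions (cf.\ Lemma~\ref{lem: khukhro}), $[Q,\langle h\rangle]=\{[q,h]:q\in Q\}$ is a finite subgroup. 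Symmetrically $C_H(x)$ is open in $H$ for every $x\in Q$.

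The core of the argument is an application of Proposition~\ref{prop: Klopsch} to the continuous map $\Psi\colon H\times Q\to C$, $(h,q)\mapsto[h,q]$, whose image is the value set and so has cardinality $<2^{\aleph_0}$. Therefore $\Psi$ is not nowhere locally constant: it is constant, say with value $c_{0}=[h_{0},q_{0}]$, on some basic open set $h_{0}U\times q_{0}V$ with $U$ open in $H$ and $V$ open in $Q$. Expanding $[h_{0}u,q_{0}v]=c_{0}$ via the identities $[ab,c]=[a,c]^{b}[b,c]$ and $[a,bc]=[a,c][a,b]^{c}$, one gets first (take $u=1$) that $[h_{0},v]$ lies in the finite set $[Q,\langle h_{0}\rangle]$ for $v\in V$, and then that $[h_{0}u,v]=[h_{0},v]$ for all $u\in U$, $v\in V$, whence $[u,v]=\bigl([h_{0},v]^{-1}\bigr)^{u}[h_{0},v]$. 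As $[h_{0},v]\in Q$ has open centralizer in $H$, the right-hand side takes only finitely many values, so $[U,V]$ is finite; running the same bookkeeping over the finitely many cosets of $U$ in $H$ and of $V$ in $Q$ then shows that $C=[H,Q]$ is generated by finitely many commutators. In other words, under the hypothesis $C$ is topologically finitely generated.

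It remains to prove that such a $C$ is finite. Write $C=\prod_{i=1}^{m}C_{p_{i}}$ with each $C_{p_{i}}$ a finitely generated pro-$p_{i}$ group, only finitely many primes occurring because $C/\Phi(C)$ is finite. For each $i$ the kernel of $\operatorname{Aut}(C_{p_{i}})\to\operatorname{Aut}(C_{p_{i}}/\Phi(C_{p_{i}}))$ is pro-$p_{i}$, so the $p_{i}'$-group induced by $H$ on $C_{p_{i}}$ is finite; hence $H$ acts on $C$ through a finite quotient $\overline{H}$ and $C_{C}(\overline H)=\bigcap_{h\in\overline H}\bigl(C\cap C_{Q}(h)\bigr)$ is open in $C$. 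Since $[C,H]=C$ and $C'$ is closed, the torsion-free part of the finitely generated abelian group $C/C'$ carries a $\mathbb{Z}_p$-linear action of each $h$ with finite image, hence with zero image, so it is centralized by $\overline{H}$ and therefore trivial; thus $C/C'$ is torsion and so finite. Finally $C$ is generated by finitely many values $[h_{j},q_{j}]$, each of which has finite conjugacy class in $G$ — its $H$-conjugates stay in the value set, and the openness of the centralizers established above controls its $Q$-conjugates — so $C_{G}(C)$ is open in $G$, $Z(C)$ has finite index in $C$, and Schur's theorem (applied exactly as in the proof of Lemma~\ref{lem: reduce to abelian case}) gives that $C'$ is finite; together with $C/C'$ finite this shows $C$ is finite.

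The step I expect to be the main obstacle is the passage in the second paragraph from the single instance of local constancy produced by Proposition~\ref{prop: Klopsch} to the topological finite generation of $[H,Q]$: one must squeeze enough out of the commutator identities — carefully combined with the openness of all the centralizers $C_Q(h)$ and $C_H(x)$ — to control every block $\{[h_{i}u,q_{j}v]:u\in U,\ v\in V\}$. A secondary delicate point is the verification, in the last paragraph, that the chosen finite generating set of $C$ consists of elements with finite $G$-conjugacy class.
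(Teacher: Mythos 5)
The paper offers no proof of this lemma (it is recorded as an ``obvious modification'' of Lemma~3.1 in~\cite{dms21}), so let me judge your argument on its own terms. The overall shape is right and in the spirit of the cited source: feed the commutator map $H\times Q\to Q$ into Proposition~\ref{prop: Klopsch}, exploit coprimality and the openness of the centralizers $C_Q(h)$ ($h\in H$) and $C_H(x)$ ($x\in Q$), reduce $C/C'$, and close with Schur. A few steps are sketchier than they ought to be. The identification $\{[q,h]:q\in Q\}=[Q,\langle h\rangle]$ in the first paragraph is not what Lemma~\ref{lem: khukhro} says (that lemma only gives a self-bijection of the value set); fortunately you only ever need finiteness of the value set, not that it is a subgroup. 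More importantly, the passage from the one constant block $h_0U\times q_0V$ to finiteness of the whole value set $\{[h,q]\}$ is the heart of the argument and is left entirely implicit. It does go through --- expand $[h_iu,q_jv]=[h_i,q_jv]^u\,[u,q_jv]$, rewrite $q_jv=v'q_j$ with $v'=v^{q_j^{-1}}\in V$ (taking $V\trianglelefteq_o Q$) so that $[u,q_jv]=[u,q_j]\,[u,v']^{q_j}$, and use finiteness of $\{[u,v]\}$, of $\{[h_i,q]\}$, of $\{[h,q_j]\}$, and openness of $C_H(x)$ for $x\in Q$ --- but this is precisely what you yourself flag as the main obstacle, and it needs to be carried out.

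The concrete gap is in the last paragraph, where you assert that each generator $[h_j,q_j]$ has finite $G$-conjugacy class. Finiteness of the $H$-orbit is fine, since $[h_j,q_j]^{h'}=[h_j^{h'},q_j^{h'}]$ stays in the finite value set. For $Q$-conjugates, however, you invoke ``the openness of the centralizers established above,'' but the centralizers you controlled are $C_Q(h)$ with $h\in H$ and $C_H(x)$ with $x\in Q$, not $C_Q(x)$ for $x\in C\le Q$, which is what would be needed. Without this, $C_G(C)$ need not be open, $Z(C)$ need not have finite index in $C$, and Schur's theorem cannot be invoked; the module argument alone only gives $C/C'$ finite, which is insufficient. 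The claim is in fact true, but the correct justification is the identity $[h,q]^{q'}=[h,q']^{-1}[h,qq']$, which places the entire $Q$-orbit of $[h,q]$ inside the finite set $\{[h,a]^{-1}[h,b]:a,b\in Q\}$. Once this is patched in, $C_G(C)$ is open, Schur applies, and together with your (essentially correct, if tersely phrased) observation that a finite image of the $\mathbb{Z}_p$-linear map $x\mapsto[x,h]$ on the torsion-free quotient must be zero, the proof closes.
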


Similarly, by using Lemma \ref{lem: 3.1 of DeMoSh} in place of Lemma 3.1 of \cite{dms21}, we can refine the proof of Lemma 3.4 of \cite{dms21} and obtain the following version.

\begin{lemma}\label{lem: meta-pronilpotent for gamma2}
    Let $G$ be a meta-pronilpotent group with
    $$
    |\{[g,h]\mid g\in G, h\in\gamma_{\infty}(G),(|g|,|h|)=1\}|<2^{\aleph_0}.
    $$
    Then $\gamma_{\infty}(G)$ is finite.
\end{lemma}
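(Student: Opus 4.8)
The plan is to run the scheme of the proof of Lemma~3.4 of \cite{dms21}, but feeding in Lemma~\ref{lem: 3.1 of DeMoSh} in place of its finite-valued analogue and adding a cardinality argument that is only needed in the present setting. Throughout write $\phi(G)=\{[g,h]\mid g\in G,\ h\in\gamma_\infty(G),\ (|g|,|h|)=1\}$, a normal subset of $G$ with $|\phi(G)|<2^{\aleph_0}$. Since $G$ is meta-pronilpotent, $\gamma_\infty(G)$ is pronilpotent, and by Lemma~\ref{lem: Hall acts on Sylow} it equals $\prod_p[K_p,H_{p'}]$, where $K_p$ is the Sylow $p$-subgroup of $\gamma_\infty(G)$ (hence normal in $G$) and $H_{p'}$ is a Hall $p'$-subgroup of $G$; comparing with the Sylow decomposition $\gamma_\infty(G)=\prod_p K_p$ we get $[K_p,H_{p'}]=K_p$, and since $K_p$ is topologically generated by commutators lying in $\phi(G)$ we also get $\langle\phi(G)\rangle=\gamma_\infty(G)$. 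By Lemma~\ref{lem: reduce to abelian case} it now suffices to prove that $\gamma_\infty(G)/\gamma_\infty(G)'$ is finite; replacing $G$ by $G/\gamma_\infty(G)'$ (still meta-pronilpotent, and the hypothesis persists since coprime commutators lift through quotients), we may assume $\gamma_\infty(G)$ is abelian.

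Set $\pi=\{p\mid K_p\neq 1\}$. For each prime $p$, apply Lemma~\ref{lem: 3.1 of DeMoSh} to the subgroup $K_pH_{p'}$ of $G$: here $K_p$ is a normal pronilpotent subgroup, $H_{p'}$ is a complement of coprime order, and $\{[h,q]\mid h\in H_{p'},\ q\in K_p\}\subseteq\phi(G)$ has fewer than $2^{\aleph_0}$ elements, so $[K_p,H_{p'}]=K_p$ is finite. Hence $\gamma_\infty(G)=\prod_{p\in\pi}K_p$ with each $K_p$ finite, and everything reduces to showing that $\pi$ is finite. I expect this to be the main obstacle: in the finite-valued setting of \cite{dms21} an infinite $\pi$ immediately produces infinitely many commutator values, but having fewer than $2^{\aleph_0}$ values does not prevent $\phi(G)$ from being countably infinite, so this step genuinely needs new input.

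Suppose $\pi$ is infinite. For $p\in\pi$ the Hall $p'$-subgroup $H_{p'}$ acts nontrivially on $K_p$ (as $[K_p,H_{p'}]=K_p\neq 1$); decomposing a nontrivially acting element into its primary parts, we obtain a prime $\ell_p\neq p$ and an $\ell_p$-element $h_p\in G$ with $[K_p,h_p]\neq1$. Consider the graph on $\pi$ joining $p$ to $\ell_p$ for every $p$: each vertex has at most one designated out-edge, so this graph contains no $K_4$ and a fortiori no infinite clique, and Ramsey's theorem lets us pass to an infinite \emph{independent} set $\sigma\subseteq\pi$, i.e.\ one with $\ell_p\notin\sigma$ for all $p\in\sigma$. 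Then each $h_p$ ($p\in\sigma$) is a $\sigma'$-element; since $G$ is prosoluble all $\sigma'$-elements lie in conjugates of a single Hall $\sigma'$-subgroup $H$, and since each $K_p$ is normal we may, after conjugating the $h_p$, assume $h_p\in H$ for all $p\in\sigma$. Put $W_p=[K_p,H]\leq K_p$ for $p\in\sigma$; by coprimality $W_p$ is a finite, nontrivial, $H$-invariant subgroup with $C_{W_p}(H)=1$.

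Now suppose, for contradiction, that every $g\in H$ centralizes $W_p$ for all but finitely many $p\in\sigma$. Then $H$ is the countable union, over finite $F\subseteq\sigma$, of the closed subgroups $\bigcap_{p\in\sigma\setminus F}C_H(W_p)$ (each $C_H(W_p)$ is open of finite index since $W_p$ is finite), so by Baire's theorem one of them, say for $F_0$, is open of finite index $m$; the $H$-actions on the $W_p$ with $p\in\sigma\setminus F_0$ then factor through a fixed quotient $J$ of order $m$ acting without fixed points, and pigeonholing over the finitely many nontrivial actions of $J$ produces an element $g\in H$ that does not centralize $W_p$ for infinitely many $p$ — contradicting the assumption. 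Hence some $g\in H$ fails to centralize $K_p$ for all $p$ in an infinite set $\sigma^{\ast}\subseteq\sigma$; pick $v_p\in K_p$ with $[g,v_p]\neq 1$ for $p\in\sigma^{\ast}$. Because the $K_p$ are normal and commute with each other, the map $\{0,1\}^{\sigma^{\ast}}\to\phi(G)$ sending $(a_p)_p$ to $[g,\prod_{p\in\sigma^{\ast}}v_p^{a_p}]$ is continuous and well defined (its argument is coprime to $g$), and its $K_p$-component is $1$ when $a_p=0$ and $[g,v_p]\neq1$ when $a_p=1$, so it is nowhere locally constant; by Proposition~\ref{prop: Klopsch} its image has cardinality $2^{\aleph_0}$, contradicting $|\phi(G)|<2^{\aleph_0}$. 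Therefore $\pi$ is finite, and $\gamma_\infty(G)=\prod_{p\in\pi}K_p$ is finite.
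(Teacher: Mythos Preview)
Your proof is correct and matches the paper's indicated approach: adapt the proof of Lemma~3.4 in \cite{dms21}, substituting Lemma~\ref{lem: 3.1 of DeMoSh} where that proof invokes Lemma~3.1 of \cite{dms21}. One correction to your framing, however: you describe \cite{dms21} as ``the finite-valued setting'' and present your Ramsey--Baire argument for the finiteness of $\pi$ as genuinely new input needed under the $<2^{\aleph_0}$ hypothesis, but \cite{dms21} is itself the strong-conciseness paper and its Lemma~3.4 is already stated and proved under a $<2^{\aleph_0}$ assumption --- the paper accordingly treats the present lemma as a routine transcription requiring no additional ideas, so your (correct and pleasantly self-contained) argument for that step is not filling a gap left open by the cited source.
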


We are now ready to prove the strong conciseness of $\gamma_k^*$ in meta-pronilpotent groups.

\begin{proposition}\label{prop: meta-pronilpotent for gamma}
    Let $G$ be a meta-pronilpotent group with $|G_{\gamma_k^*}|<2^{\aleph_0}$. Then $\gamma_{\infty}(G)$ is finite.
\end{proposition}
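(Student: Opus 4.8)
The plan is to reduce the statement to Lemma~\ref{lem: meta-pronilpotent for gamma2}. When $k=2$ the $\gamma_2^*$-values are exactly the coprime commutators, so the hypothesis immediately bounds the set $\{[g,h] : g\in G,\ h\in\gamma_\infty(G),\ (|g|,|h|)=1\}$ and Lemma~\ref{lem: meta-pronilpotent for gamma2} applies; so assume $k\ge 3$. Since $G$ is meta-pronilpotent, $\gamma_\infty(G)$ is pronilpotent; write $\gamma_\infty(G)=\prod_p K_p$ with $K_p$ its Sylow $p$-subgroup (normal in $G$), and fix for each prime $p$ a Hall $p'$-subgroup $H_{p'}$ of $G$. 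By Lemma~\ref{lem: Hall acts on Sylow} one has $\gamma_\infty(G)=\prod_p[K_p,H_{p'}]$, and comparing $p$-components gives $K_p=[K_p,H_{p'}]$ for every $p$. I will show that each $K_p$ is finite and that $K_p=1$ for all but finitely many $p$; this yields that $\gamma_\infty(G)$ is finite, which is even stronger than what is needed.

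The structural input is the following. As $\gamma_\infty(G)=\gamma_{k-1}^*(G)$, it is topologically generated by $\gamma_{k-1}^*$-values; the $p$-part of such a value is a profinite power of it, hence belongs to the set $X$ of the definition, and these $p$-parts form a $G$-invariant subset $X_p$ of $X\cap K_p$ that topologically generates $K_p$. Consequently, for every $x\in X_p$ and every $p'$-element $g\in G$ we have $(|x|,|g|)=1$, so $[x,g]$ is a $\gamma_k^*$-value lying in $K_p$. Using the $G$-invariance of $X_p$ together with standard commutator identities, one checks that $K_p=[K_p,H_{p'}]$ is topologically generated by $\gamma_k^*$-values; since it is pronilpotent, Lemma~\ref{lem: reduce to abelian case} reduces the finiteness of $K_p$ to the finiteness of $K_p/K_p'$.

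To finish, one works in the abelian pro-$p$ group $W:=K_p/K_p'$: the group $H_{p'}$ acts coprimely on $W$ with $[W,H_{p'}]=W$, and for each $p'$-element $h$ the map $w\mapsto[w,h]$ is a continuous endomorphism of $W$ with image $[W,h]$, which by Lemma~\ref{lem: khukhro} restricts to a \emph{bijection} of $[W,h]$. This bijectivity is the crucial point: it allows an arbitrary commutator $[w,h]$ to be replaced by one of the controlled form coming ultimately from $X_p$, and combined with the fact that $W$ is generated by the images of the $\gamma_k^*$-values $[x,h]$ ($x\in X_p$) — each of which has a finite conjugacy class in $G$ by Lemma~\ref{lem: finite conjugacy class} — one argues that the set $\{[h,w] : h\in H_{p'},\ w\in W\}$ has fewer than $2^{\aleph_0}$ elements. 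Lemma~\ref{lem: 3.1 of DeMoSh}, applied to $W\rtimes H_{p'}$, then gives that $W=[H_{p'},W]$ is finite, whence $K_p$ is finite. Finally, if infinitely many $K_p$ were nontrivial, then — each nontrivial $K_p=[K_p,H_{p'}]$ containing a nontrivial $\gamma_k^*$-value that is a $p$-element — one could assemble from such elements, over the subsets of an infinite set of these primes, a family of $2^{\aleph_0}$ distinct $\gamma_k^*$-values, contradicting the hypothesis.

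The main obstacle is the step in the previous paragraph: the set of relevant commutators $[h,w]$ is a priori far larger than the (small) set of $\gamma_k^*$-values that generate $W$, and passing from the latter to control of the former is exactly what Lemma~\ref{lem: khukhro} makes possible, after which Lemma~\ref{lem: 3.1 of DeMoSh} can be invoked. The reductions in the first two paragraphs (the equality $K_p=[K_p,H_{p'}]$, the fact that $p$-parts of $\gamma_{k-1}^*$-values generate $K_p$ and are powers of $\gamma_{k-1}^*$-values, and the reduction to $K_p/K_p'$) are comparatively routine.
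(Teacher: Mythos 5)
Your central idea---using the bijectivity in Lemma~\ref{lem: khukhro} to recast an arbitrary coprime commutator as an iterated commutator, hence as a $\gamma_k^*$-value---is exactly the paper's key step, but you bury it inside a Sylow-by-Sylow decomposition that the paper never needs. The paper's argument is a short direct reduction to Lemma~\ref{lem: meta-pronilpotent for gamma2}: given $g\in G$ and $h\in\gamma_\infty(G)$ with $(|g|,|h|)=1$, let $H$ be the minimal Hall subgroup of the pronilpotent group $\gamma_\infty(G)$ containing $h$ (normal in $G$); since $g$ acts coprimely on $H$, Lemma~\ref{lem: khukhro} gives $h'\in H$ with $[h,g]=[h',g,\overset{k-1}{\ldots},g]$, a $\gamma_k^*$-value, so the set in Lemma~\ref{lem: meta-pronilpotent for gamma2} is contained in $G_{\gamma_k^*}$ and has fewer than $2^{\aleph_0}$ elements. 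That lemma then gives finiteness of $\gamma_\infty(G)$ at once.

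Beyond being much longer, your route has a genuine gap at the end. After showing each $K_p$ finite, you still need only finitely many $K_p$ nontrivial, and you propose to manufacture $2^{\aleph_0}$ distinct $\gamma_k^*$-values by taking products $\prod_{p\in J}v_p$ over subsets $J$ of an infinite set of primes. But the set of $\gamma_k^*$-values is not closed under products, not even products of pairwise-commuting elements of pairwise coprime orders, so these products are not obviously $\gamma_k^*$-values. The analogous assembly in Proposition~\ref{prop: infinite Fitting} works only because there all the ingredients $h_{S_j}$ and $e_{S_j}$ live in commuting direct factors, so $\prod_j[h_{S_j},e_{S_j},\ldots,e_{S_j}]$ can be rewritten as a \emph{single} iterated commutator of $\prod_j h_{S_j}$ and $\prod_j e_{S_j}$. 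Here the $x_p$ do lie in commuting $K_p$, but the second entries $g_p$ (and the Hall subgroups $H_{p'}$) vary with $p$ and need not commute, so no such rewriting is available, and your contradiction does not follow. A secondary weakness is the step asserting that $\{[h,w]\mid h\in H_{p'},\,w\in W\}$ has fewer than $2^{\aleph_0}$ elements: you gesture at the right mechanism (Lemma~\ref{lem: khukhro}) but never state the precise fact that every such $[h,w]$ is the image of a $\gamma_k^*$-value. Once that is made explicit, the same computation applies verbatim with $\gamma_\infty(G)$ in place of $K_p/K_p'$, and the whole Sylow decomposition, the reduction via Lemma~\ref{lem: reduce to abelian case}, and the problematic final counting argument all become unnecessary.
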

\begin{proof}
    Let $g\in G$ and $h\in\gamma_{\infty}(G)$ such that $(|g|,|h|)=1$, and let $H$ be the minimal Hall subgroup of $\gamma_{\infty}(G)$ containing $h$.
    Since $H$ is pronilpotent, Lemma \ref{lem: khukhro} shows that there exists $h'\in H$ such that $[h,g]=[h',g,\overset{k-1}{\ldots},g]$, and therefore $[h,g]\in G_{\gamma_k^*(G)}$.
    Hence, we have
    $$
    |\{[g,h]\mid g\in G, h\in\gamma_{\infty}(G),(|g|,|h|)=1\}|<2^{\aleph_0},
    $$
    and we conclude by Lemma \ref{lem: meta-pronilpotent for gamma2}.
\end{proof}

\subsection{The poly-pronilpotent case for \texorpdfstring{$\delta_k^*$}{d*k}}

Recall that a Sylow basis of a profinite group $G$ is a family $\{P_i\}$ of Sylow subgroups of $G$, one for each prime in $\pi(G)$, such that $P_iP_j=P_jP_i$ for every $i,j$.
The normalizer of a Sylow basis, namely $T=\bigcap_i N_G(P_i)$, is pronilpotent and satisfies $G=T\gamma_\infty(G)$.
Basic results on Sylow bases can be found in Section 9.2 of \cite{Rob96}.
It is well-known that any prosoluble group admits a Sylow basis (see Proposition 2.3.9 of \cite{RiZa10}).
Moreover, if $G$ is meta-pronilpotent, it is easy to see that $\gamma_{\infty}(G)=[T,\gamma_{\infty}(G)]$.

For every profinite group $G$, the subgroup $\delta^*_k(G)$ coincides with the nilpotent residual $\gamma_\infty (\delta_{k-1}^*(G)) $ of $\delta_{k-1}^*(G)$ (see \cite{Sh15}).
Let $\{P_i\}$ be a Sylow basis of $G$ and observe that $\{P_i\cap\delta_j^*(G)\}$ is a Sylow basis of $\delta_j^*(G)$ for every $j\ge 0$.
Let $T_j$ be the normalizer in $\delta_j^*(G)$ of the Sylow basis $\{P_i\cap\delta_j^*(G)\}$, so that $G=T_0T_1\cdots T_k\delta^*_{k+1}(G)$ for every $k\ge 1$ and $T_j\le N_G(T_i)$ for every $j\le i$.
In particular, if $G$ is a prosoluble group of Fitting height $k$, then $\delta_{k+1}^*(G)=1$, and therefore $G=T_0\cdots T_k$.
In this case, we have $T_k=\delta_k^*(G)$ and $T_i=[T_0,\ldots, T_i] \pmod{\delta_{i+1}^*(G)}$ for all $i\in\{0,\ldots,k\}$.

For this reason, we will often work with subgroups $G_1,\ldots,G_t$, for a positive integer $t$, of a profinite group $G$ such that $G_j\leq N_{G}(G_i)$ for every $j\leq i$; in this setting, we will write
\begin{equation*}
\begin{array}{cccc}
\varphi:& G_1\times \cdots \times G_t &\longrightarrow & G_t\\
 & (g_1,\ldots,g_t) &\longmapsto & [g_1,\ldots,g_t].
\end{array}
\end{equation*}
Let
\begin{equation}\label{eq: coprimes}    
\mathcal{C}=\{(g_1,...,g_t)\in G_1 \times \cdots \times G_t \mid (|g_i|,|g_{i+1}|)=1\};
\end{equation}
for $S_i\subseteq G_i$, $i\in\{1,\ldots,t\}$, we define the set
$$
\varphi^*(S_1,\ldots,S_t)=\varphi((S_1\times\cdots\times S_t)\cap \mathcal{C}).
$$
The subgroups $G_1,\ldots,G_t$ for which the definitions of $\varphi$ and $\varphi^*$ apply will be clear from the context.

For $i\in \{1,\ldots,t\}$, let $X_i,Y_i\subseteq G_i$.
Similar to \cite{dks20}, for $J\subseteq \{1,\ldots,t\}$ we can define the set
$$
\varphi_J (X_i;Y_i)=\varphi(Z_1,\ldots,Z_t) \quad \text{ with } \quad
Z_i=\left\{
\begin{array}{ll}
    X_i & \text{if }\ i\in J,\\
    Y_i & \text{if }\ i\not\in J.
    \end{array}
\right.
$$
Notice that in order for $\varphi_J$ to be well-defined, we just need the subsets $X_i$ where $i\in J$ and the subsets $Y_i$ where $i\not\in J$.
In a similar way, define the set
$$
\varphi_J^* (X_i;Y_i)=\varphi((Z_1\times \cdots \times Z_t)\cap \mathcal{C}).
$$
If $J=\{1,\ldots,t\}$, by abuse of notation, we will just write $\varphi_J (X_i;Y_i)=\varphi(X_i)$ and $\varphi_J^* (X_i;Y_i)=\varphi^* (X_i)$.

\begin{remark}\label{rem: normal in Gj}
    Notice that whenever we have subgroups $G_1, \ldots, G_\ell$ of a profinite group $G$ with $G_j\le N_G(G_i)$ for every $j\le i$, and we take an open subgroup $U\trianglelefteq_o G_\ell$, there exists an open normal subgroup $V\trianglelefteq_o G$ such that $V\cap G_\ell\le U$.
    This implies that $V\cap G_\ell\trianglelefteq G_1\cdots G_\ell$ and $V\trianglelefteq_o G_\ell$.
\end{remark}

The next two lemmas are useful applications of basic commutator calculus.
The first one follows the ideas of Lemma 2.8 of \cite{HeMo21} while Lemma \ref{lem: split subgroups} is an application of Lemma \ref{lem: split commutators Iker Marta} to coprime commutators.

\begin{lemma}
    \label{lem: split commutators Iker Marta}
    Let $G_1,\ldots,G_t$ be subgroups of a profinite group $G$ such that $G_j\le N_G(G_i)$ for every $j\le i$. 
    For every $i\in\{1,\ldots,t\}$ let $g_i\in G_i$, and for a fixed $\ell\in\{1,\ldots,t\}$, let $g_{\ell}'\in G_{\ell}$.
    Then
    $$
    \varphi_{\{\ell\}}(g'_\ell g_\ell;g_{i})=
    [g_1,\ldots,g_{\ell-1},g_\ell',g_{\ell+1}^{h_{\ell}},\ldots,g_t^{h_{t-1}}]^{h_{t}}
    \varphi(g_i),
    $$
    where $h_i\in G_{\ell}\cdots G_{i}$ for $i\in\{\ell,\ldots,t\}$. In particular $g_{i+1}^{h_i}\in G_{i+1}$ for all $i\in \{\ell,\ldots, t-1\}$
    
\end{lemma}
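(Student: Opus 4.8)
The plan is to prove the identity by induction on $t-\ell$, that is, by working outward one commutator bracket at a time starting from position $\ell$. The base case is $t=\ell$: here the claimed identity reads $\varphi_{\{\ell\}}(g'_\ell g_\ell;g_i)=[g_1,\ldots,g_{\ell-1},g'_\ell]\,\varphi(g_i)$ up to the conjugation $h_t=h_\ell\in G_\ell$, and in fact this is not yet literally true as stated — the correct base case comes from the elementary commutator identity $[u,ab]=[u,b][u,a]^b$, which I would apply with $u=[g_1,\ldots,g_{\ell-1}]$, $a=g'_\ell$, $b=g_\ell$ to get $[g_1,\ldots,g_{\ell-1},g'_\ell g_\ell]=[g_1,\ldots,g_{\ell-1},g_\ell]\cdot[g_1,\ldots,g_{\ell-1},g'_\ell]^{g_\ell}$. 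Rewriting this as $[g_1,\ldots,g_{\ell-1},g'_\ell]^{g_\ell}\cdot\big([g_1,\ldots,g_{\ell-1},g'_\ell]^{-g_\ell}[g_1,\ldots,g_{\ell-1},g_\ell][g_1,\ldots,g_{\ell-1},g'_\ell]^{g_\ell}\big)$ is awkward; it is cleaner to instead use $[u,ab]=[u,a]\,[u,b]^a$, giving $[g_1,\ldots,g_{\ell-1},g'_\ell g_\ell]=[g_1,\ldots,g_{\ell-1},g'_\ell]\,[g_1,\ldots,g_{\ell-1},g_\ell]^{g'_\ell}$. For $t=\ell$ the second factor is $\varphi(g_i)^{g'_\ell}$, so with $h_\ell=g'_\ell\in G_\ell$ the base case matches the stated form (reading $\varphi(g_i)^{h_t}$ in place of $\varphi(g_i)$, which is consistent with the statement once one tracks that the final $\varphi(g_i)$ there should also carry the conjugator — I would double-check the precise placement of the $h_t$'s against how the lemma is invoked, but the structure is exactly this).

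For the inductive step, suppose the identity holds for $t-1$ subgroups $G_1,\ldots,G_{t-1}$ (same $\ell$). Write $\varphi_{\{\ell\}}(g'_\ell g_\ell;g_i)=[\,w,\,g_t\,]$ where $w=[g_1,\ldots,g_{\ell-1},g'_\ell g_\ell,g_{\ell+1},\ldots,g_{t-1}]$ is the length-$(t-1)$ commutator. By the inductive hypothesis, $w=[g_1,\ldots,g_{\ell-1},g'_\ell,g_{\ell+1}^{h_\ell},\ldots,g_{t-1}^{h_{t-2}}]^{h_{t-1}}\cdot v$, where $v=[g_1,\ldots,g_{t-1}]$ and $h_i\in G_\ell\cdots G_i$. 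Set $a=[g_1,\ldots,g_{\ell-1},g'_\ell,g_{\ell+1}^{h_\ell},\ldots,g_{t-1}^{h_{t-2}}]^{h_{t-1}}$, so $w=av$. Then $[w,g_t]=[av,g_t]=[a,g_t]^v[v,g_t]$. The factor $[v,g_t]=[g_1,\ldots,g_t]=\varphi(g_i)$ is exactly what we want. For $[a,g_t]^v$: since $a$ is a commutator of weight $t-1$ with first entry $g_1\in G_1$, we have $a\in G_1\cdots G_{t-1}$ — actually more precisely $a$ lies in the appropriate subgroup so that $[a,g_t]$ makes sense — and the conjugation by $v\in G_1\cdots G_{t-1}$ can be absorbed. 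I would then use $[a,g_t]^v=[a^v,g_t^v]=[a^v,g_t^{h'}]$ for a suitable $h'\in G_\ell\cdots G_{t-1}$ (here one must check $g_t^{h'}\in G_t$, which follows from $G_j\le N_G(G_i)$ for $j\le i$ applied with $i=t$), and rewrite $a^v$ as $[g_1,\ldots,g_{\ell-1},g'_\ell,g_{\ell+1}^{h_\ell},\ldots,g_{t-1}^{h_{t-2}}]^{h_{t-1}v}$; absorbing $h_{t-1}v$ into a new $h_{t-1}\in G_\ell\cdots G_{t-1}$ and setting $h_t=h_{t-1}h'$ or similar gives the desired form $[g_1,\ldots,g_{\ell-1},g'_\ell,g_{\ell+1}^{h_\ell},\ldots,g_t^{h_{t-1}}]^{h_t}\varphi(g_i)$.

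The main obstacle is bookkeeping: one must track at every stage that each conjugator $h_i$ genuinely lands in the product $G_\ell\cdots G_i$ (so that conjugates like $g_{i+1}^{h_i}$ remain inside $G_{i+1}$, which is the crucial consequence the lemma advertises and is exactly what makes it usable later), and that the normalizing hypothesis $G_j\le N_G(G_i)$ for $j\le i$ is applied in the right direction each time — note it does \emph{not} say $G_i$ normalizes $G_j$, so conjugating a small-index element by a large-index one is forbidden and one has to be careful never to do that accidentally. A secondary point is simply choosing a consistent commutator-expansion convention (I will fix $[x,yz]=[x,z][x,y]^z$ and $[xy,z]=[x,z]^y[y,z]$) so that the induction closes cleanly; with those fixed, every step above is a routine application of the Hall–Witt–type identities and the rest is careful indexing.
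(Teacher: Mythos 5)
Your overall strategy --- induction on $t-\ell$, peeling one bracket off at a time and expanding the piece that touches $g_\ell' g_\ell$ --- is exactly the paper's. The inductive step is also on the right track: writing $w=av$ with $a=[g_1,\ldots,g_\ell',\ldots,g_{t-1}^{h_{t-2}}]^{h_{t-1}}$ and $v=[g_1,\ldots,g_{t-1}]$, expanding $[av,g_t]=[a,g_t]^v[v,g_t]$, keeping $[v,g_t]=\varphi(g_i)$ as the clean second factor, and absorbing $v\in G_{t-1}$ into the conjugator is precisely what the paper does; you should just note that the resulting inner conjugator on $g_t$ comes out as $h_{t-1}^{-1}$ (since $[a^{h},g_t]^c=[a,g_t^{h^{-1}}]^{hc}$), which is harmless because the normalizing hypothesis makes $G_\ell\cdots G_{t-1}$ a subgroup, so it is closed under inverses.

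The genuine gap is in the base case. The identity you switch to, $[u,ab]=[u,a][u,b]^a$, is false: the right side expands to $u^{-1}a^{-1}b^{-1}uba$, whereas $[u,ab]=u^{-1}b^{-1}a^{-1}uab$, and these agree only when $a$ and $b$ commute. That is why you ended up unsure about ``the precise placement of the $h_t$'s'' and tempted to read a conjugator onto $\varphi(g_i)$ --- the lemma is stated correctly with $\varphi(g_i)$ unconjugated, and your first (correct) computation already gets you there. From $[u,g_\ell' g_\ell]=[u,g_\ell][u,g_\ell']^{g_\ell}=\varphi(g_i)\cdot B$ with $B=[g_1,\ldots,g_\ell']^{g_\ell}$, the factor $\varphi(g_i)$ is merely on the wrong side. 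Instead of the rearrangement $AB=B\cdot A^B$ that you tried and rightly found useless (it conjugates $\varphi(g_i)$), use $AB=B^{A^{-1}}\cdot A$: this yields $[g_1,\ldots,g_\ell']^{g_\ell[g_1,\ldots,g_t]^{-1}}\varphi(g_i)$, so one may take $h_t=g_\ell[g_1,\ldots,g_t]^{-1}\in G_t$ (here $\ell=t$), matching the lemma exactly. This is precisely the paper's base case, and with it the rest of your argument closes.
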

\begin{proof}
    Assume first that $\ell\neq 1$, and proceed by induction on $t-\ell$.
    If $t-\ell=0$, then
    $$
    [g_1,\ldots,g_{t-1},g'_t g_t]
    =
    [g_1,\ldots,g'_t]^{{g_t}[g_1,\ldots,g_t]^{-1}}[g_1,\ldots,g_t],
    $$
    and the result follows.
    Assume $t-\ell>0$, and we write, for the sake of brevity, $y=[g_1,\ldots, g_{\ell-1}]$.
    By induction, we have
    $$
    [y,g'_\ell g_\ell,g_{\ell+1},\ldots,g_t]
    =
    [[y,g_\ell',g_{\ell+1}^{h_{\ell}},\ldots,g_{t-1}^{h_{t-2}}]^{h_{t-1}}[g_1,\ldots,g_{t-1}],g_t]
    $$
    with $h_i\in G_{\ell}\cdots G_i$ for $i\in\{\ell,\ldots,t-1\}$.
    Now,
    \begin{align*}
    [[y,g_\ell',g_{\ell+1}^{h_{\ell}},\ldots,g_{t-1}^{h_{t-2}}]&^{h_{t-1}}[g_1,\ldots,g_{t-1}],g_t]\\
    &\ =
    [[y,g_\ell',g_{\ell+1}^{h_{\ell}},\ldots,g_{t-1}^{h_{t-2}}]^{h_{t-1}},g_t]^{[g_1,\ldots,g_{t-1}]}[g_1,\ldots,g_t]\\
    &\ =
    [y,g_\ell',g_{\ell+1}^{h_{\ell}},\ldots,g_{t-1}^{h_{t-2}},g_t^{(h_{t-1})^{-1}}]^{h_{t-1}[g_1,\ldots,g_{t-1}]}[g_1,\ldots,g_t],
    \end{align*}
    and the lemma follows.
    If $\ell=1$, a similar argument applies.
\end{proof}

\begin{lemma}
\label{lem: split subgroups}
    Let $G_1,\ldots,G_t$ be subgroups of a profinite group $G$ such that $G_j\le N_G(G_i)$ for every $j\le i$.
    Let $\ell\in\{1,\ldots,t\}$ and $H_1,H_2\subseteq G_{\ell}$ be such that $\pi(h_1),\pi(h_2)\subseteq\pi(h_1h_2)$ for every $h_1\in H_1$, $h_2\in H_2$.
    Let $X_i\subseteq G_i$ for $i\in\{1,\ldots,\ell-1\}$, and for $i\in\{\ell+1,\ldots,t\}$ denote $X_i=G_i$.
    Then:
    \begin{enumerate}
        \item  If $\varphi^*_{\{\ell\}}(H_1;X_i)=\varphi^*_{\{\ell\}}(H_2;X_i)=1$, then $\varphi^*_{\{\ell\}}(H_1H_2;X_i)=1$.
        \item If $\varphi^*_{\{\ell\}}(H_j;X_i)=\varnothing$ for some $j\in\{1,2\}$, then $\varphi^*_{\{\ell\}}(H_1H_2;X_i)=\varnothing$.
    \end{enumerate}
\end{lemma}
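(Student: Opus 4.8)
The plan is to deduce both parts of Lemma~\ref{lem: split subgroups} from Lemma~\ref{lem: split commutators Iker Marta} by writing an arbitrary element of $H_1H_2$ as a product $h_1h_2$ with $h_1\in H_1$, $h_2\in H_2$, and tracking which factors remain coprime. Fix a tuple $(g_1,\ldots,g_t)\in(Z_1\times\cdots\times Z_t)\cap\mathcal{C}$ with $Z_\ell\in\{H_1H_2, \text{some }X_i\text{-slot}\}$; more precisely, choose $g_i\in X_i$ for $i\neq\ell$ and $g_\ell\in H_1H_2$ with $g_\ell=h_1h_2$, $h_1\in H_1$, $h_2\in H_2$, subject to the coprimality constraints $(|g_{i}|,|g_{i+1}|)=1$ for all $i$. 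The crucial point is the hypothesis $\pi(h_1),\pi(h_2)\subseteq\pi(h_1h_2)=\pi(g_\ell)$: this guarantees that the neighbouring constraints $(|g_{\ell-1}|,|g_\ell|)=1$ and $(|g_\ell|,|g_{\ell+1}|)=1$ automatically upgrade to $(|g_{\ell-1}|,|h_j|)=1$ and $(|h_j|,|g_{\ell+1}|)=1$ for $j=1,2$. Hence replacing the $\ell$-th coordinate by either $h_1$ or $h_2$ keeps us inside $\mathcal{C}$.

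For part~(i), I would apply Lemma~\ref{lem: split commutators Iker Marta} with $g_\ell'=h_1$ and $g_\ell=h_2$ (so that $h_1h_2$ plays the role of $g_\ell'g_\ell$), obtaining
$$
\varphi_{\{\ell\}}(h_1h_2;g_i)=[g_1,\ldots,g_{\ell-1},h_1,g_{\ell+1}^{h_\ell},\ldots,g_t^{h_{t-1}}]^{h_t}\,\varphi(g_i),
$$
where $h_i\in G_\ell\cdots G_i$ and, by the last assertion of that lemma, $g_{i+1}^{h_i}\in G_{i+1}=X_{i+1}$ for $i\ge\ell$. The second factor $\varphi(g_i)=[g_1,\ldots,g_{\ell-1},h_2,g_{\ell+1},\ldots,g_t]$ lies in $\varphi_{\{\ell\}}^*(H_2;X_i)$ by the coprimality observation above, hence equals $1$. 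For the first factor, I note that its base commutator $[g_1,\ldots,g_{\ell-1},h_1,g_{\ell+1}^{h_\ell},\ldots,g_t^{h_{t-1}}]$ is precisely a value of $\varphi_{\{\ell\}}(H_1;X_i)$ evaluated at $(g_1,\ldots,g_{\ell-1},h_1,g_{\ell+1}^{h_\ell},\ldots)$; since conjugation does not change orders, this tuple still lies in $\mathcal{C}$, so the base commutator is in $\varphi_{\{\ell\}}^*(H_1;X_i)=1$, and therefore its conjugate by $h_t$ is $1$ as well. Thus $\varphi_{\{\ell\}}(h_1h_2;g_i)=1\cdot 1=1$, and since the tuple was arbitrary, $\varphi_{\{\ell\}}^*(H_1H_2;X_i)=1$.

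For part~(ii), say $\varphi_{\{\ell\}}^*(H_j;X_i)=\varnothing$, i.e.\ there is \emph{no} admissible coprime tuple with the $\ell$-th entry in $H_j$ and the remaining entries in the $X_i$'s. Suppose towards a contradiction that $\varphi_{\{\ell\}}^*(H_1H_2;X_i)\neq\varnothing$, witnessed by a tuple with $g_\ell=h_1h_2$. Exactly as above, the hypothesis $\pi(h_j)\subseteq\pi(h_1h_2)$ forces $(|g_{\ell-1}|,|h_j|)=1$ and $(|h_j|,|g_{\ell+1}|)=1$, so the modified tuple with $g_\ell$ replaced by $h_j$ still lies in $\mathcal{C}$ and witnesses $\varphi_{\{\ell\}}^*(H_j;X_i)\neq\varnothing$ — a contradiction. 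Hence $\varphi_{\{\ell\}}^*(H_1H_2;X_i)=\varnothing$.

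The main obstacle I anticipate is purely bookkeeping: in part~(i) one must be careful that the conjugating elements $h_\ell,\ldots,h_t$ produced by Lemma~\ref{lem: split commutators Iker Marta} move the later coordinates $g_{\ell+1},\ldots,g_t$ only within their respective subgroups $G_{\ell+1}=X_{\ell+1},\ldots,G_t=X_t$ (which is exactly the content of the final sentence of that lemma) and that the coprimality of the resulting tuple is preserved — conjugation fixes element orders, so the tuple $(g_1,\ldots,g_{\ell-1},h_1,g_{\ell+1}^{h_\ell},\ldots,g_t^{h_{t-1}})$ remains in $\mathcal{C}$. Once this is checked, both statements reduce to recognizing the two factors on the right-hand side of Lemma~\ref{lem: split commutators Iker Marta} as elements of $\varphi_{\{\ell\}}^*(H_1;X_i)$ and $\varphi_{\{\ell\}}^*(H_2;X_i)$ respectively, after which the hypotheses finish the argument immediately.
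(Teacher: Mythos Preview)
Your proof is correct and follows essentially the same approach as the paper's: both arguments rest on the observation that $\pi(h_j)\subseteq\pi(h_1h_2)$ forces any admissible coprime tuple with $h_1h_2$ in slot $\ell$ to restrict to admissible tuples with $h_1$ or $h_2$ in that slot, after which Lemma~\ref{lem: split commutators Iker Marta} splits the commutator into two factors lying in $\varphi_{\{\ell\}}^*(H_1;X_i)$ and $\varphi_{\{\ell\}}^*(H_2;X_i)$ respectively. You are simply more explicit than the paper about the bookkeeping---in particular that $X_i=G_i$ for $i>\ell$ absorbs the conjugated entries $g_{i+1}^{h_i}$ and that conjugation preserves orders---whereas the paper compresses all of this into ``the result follows now directly from Lemma~\ref{lem: split commutators Iker Marta}.''
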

\begin{proof}
    Since $\pi(h_1),\pi(h_2)\subseteq\pi(h_1h_2)$ for every $h_1\in H_1$, $h_2\in H_2$, the second statement is straightforward.
    Moreover, if $\varphi_{\{\ell\}}(h_1h_2;g_i)\in \varphi_{\{\ell\}}^*(H_1H_2;X_i)$, then for $j\in \{1,2\}$ we have $\varphi_{\{\ell\}}(h_j;g_i)\in \varphi_{\{\ell\}}^*(H_j;X_i)$.
    The result follows now directly from Lemma \ref{lem: split commutators Iker Marta}.
\end{proof}

In view of the preceding lemma, we now introduce a convenient way to choose coset representatives of normal subgroups. These will play an important role throughout the paper.

\begin{definition}
    \label{def: good representatives}
    Let $G$ be a profinite group and $U\trianglelefteq G$.
    An element $g\in G$ is a \emph{good representative} of the coset $gU$ if $\pi(g),\pi(u)\subseteq\pi(gu)$ for every $u\in U$.
\end{definition}

\begin{remark}
\label{rem: good representatives}
    In Definition \ref{def: good representatives}, if $G$ is pronilpotent, then any element $g\in G$ can be written in an unique way as $g=\prod_{p\in \pi(G)} g_p$ with $g_p$ a $p$-element of $G$.
    Hence, $g$ is a good representative of the coset $gU$ if $g_p=1$ whenever $g_p\in U$ for $p\in \pi(G)$, or equivalently, if $\pi(g)$ is minimal among all representatives of the coset $gU$.
\end{remark}

The following lemma is an application of Proposition \ref{prop: Klopsch} to a special type of coprime commutators.

\begin{lemma}
    \label{lem: with Klopsch is easier}
    Let $G_1,\ldots,G_t$ be pronilpotent subgroups of a profinite group $G$ such that $G_j\leq N_G(G_i)$ for all $j\leq i$, and $|G_{\delta^*_{t-1}}|<2^{\aleph_0}$.
    For every $i\in \{1,\ldots,t\}$, let $S_i$ be a closed subset of $G_i$.
    If $\varphi^*(S_i)\neq \varnothing$, then, there exist elements $x_i\in G_i$ and open subgroups $U_i\trianglelefteq_o G_i$ such that $|\varphi^*(x_iU_i\cap S_i)|=1$.
\end{lemma}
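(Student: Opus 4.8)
The plan is to deduce the statement from Klopsch's lemma (Proposition~\ref{prop: Klopsch}), applied to the coprime commutator map restricted to
$$
D=(S_1\times\cdots\times S_t)\cap\mathcal{C}.
$$
By Proposition~\ref{lem: everything is closed} the set $D$ is closed in the profinite group $G_1\times\cdots\times G_t$, hence a profinite space, and $\varphi|_D$ is continuous with image $\varphi^*(S_i)$. The first and decisive step is to show that this image is small, namely that $\varphi^*(S_i)\subseteq G_{\delta^*_{t-1}}$, so that $|\varphi^*(S_i)|\le|G_{\delta^*_{t-1}}|<2^{\aleph_0}$. Granting this, Proposition~\ref{prop: Klopsch} rules out $\varphi|_D$ being nowhere locally constant, so $\varphi$ is constant on some non-empty relatively open subset of $D$; shrinking to a basic neighbourhood, we obtain $x_i\in G_i$ and $U_i\trianglelefteq_o G_i$ such that the non-empty set
$$
D\cap(x_1U_1\times\cdots\times x_tU_t)=\big((x_1U_1\cap S_1)\times\cdots\times(x_tU_t\cap S_t)\big)\cap\mathcal{C}
$$
is sent by $\varphi$ to a single point, which is precisely the assertion $|\varphi^*(x_iU_i\cap S_i)|=1$.

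The core is therefore the inclusion $\varphi^*(S_i)\subseteq G_{\delta^*_{t-1}}$, i.e.\ that every commutator $[g_1,\ldots,g_t]$ with $g_i\in G_i$ and $(|g_i|,|g_{i+1}|)=1$ for all $i$ is a $\delta^*_{t-1}$-value. I would prove this by induction on $t$, carrying along the auxiliary fact that $u_j:=[g_1,\ldots,g_j]$ lies in the minimal Hall subgroup $H_j$ of $G_j$ containing $g_j$, so that $\pi(u_j)\subseteq\pi(H_j)=\pi(g_j)$; here $H_j$ is a subgroup, and characteristic in $G_j$, because $G_j$ is pronilpotent. For the step: $u_{j-1}\in H_{j-1}\le N_G(H_j)$ (using $G_{j-1}\le N_G(G_j)$ and that $H_j$ is characteristic in $G_j$), so $u_j=[u_{j-1},g_j]=(g_j^{\,u_{j-1}})^{-1}g_j\in H_j$; moreover $\pi(u_{j-1})\subseteq\pi(g_{j-1})$ is disjoint from $\pi(H_j)=\pi(g_j)$, so conjugation by $u_{j-1}$ is a coprime automorphism of the pronilpotent group $H_j$. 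By Lemma~\ref{lem: khukhro} the map $x\mapsto[x,u_{j-1}]$ is bijective on $S=\{[h,u_{j-1}]\mid h\in H_j\}$, and as $u_j^{-1}=[g_j,u_{j-1}]\in S$ we may desuspend it to $u_j^{-1}=[s,u_{j-1},\ldots,u_{j-1}]$ with $j-1$ entries $u_{j-1}$ and some $s\in H_j$. Since $u_{j-1}$ is a $\delta^*_{j-2}$-value by the inductive hypothesis, lies in $N_G(H_j)$, and is coprime to $H_j$, Lemma~\ref{lem: long commutator of y_i are derived values} (applied with $H=H_j$ and the $j-1$ entries all equal to $u_{j-1}$) shows that $u_j^{-1}$, and hence $u_j$ — the set of $\delta^*_{j-1}$-values being closed under inversion — is a $\delta^*_{j-1}$-value. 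Taking $j=t$ completes the induction.

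Once the cardinality bound is available, the Klopsch step is purely formal, so I expect the main obstacle to be the inductive identification of $[g_1,\ldots,g_t]$ as a $\delta^*_{t-1}$-value. The sensitive points are the coprimality bookkeeping — checking that the partial commutators $u_j$ stay inside the Hall subgroups $H_j$, which is exactly what makes conjugation by $u_{j-1}$ coprime and lets Lemmas~\ref{lem: khukhro} and~\ref{lem: long commutator of y_i are derived values} apply — and matching the number of desuspended copies of $u_{j-1}$ to the subscript $j$, so that Lemma~\ref{lem: long commutator of y_i are derived values} outputs a $\delta^*_{j-1}$-value and not one of a different level.
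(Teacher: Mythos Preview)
Your proposal is correct and follows essentially the same approach as the paper: prove by induction on $j$ that each partial commutator $[g_1,\ldots,g_j]$ is a $\delta^*_{j-1}$-value with $\pi([g_1,\ldots,g_j])\subseteq\pi(g_j)$, using Lemma~\ref{lem: khukhro} to desuspend and Lemma~\ref{lem: long commutator of y_i are derived values} to identify the result as a $\delta^*$-value, and then apply Proposition~\ref{prop: Klopsch} to the continuous map $\varphi$ on the closed set $D$. You are in fact slightly more careful than the paper on two points: you explicitly justify why $u_{j-1}$ normalises $H_j$ (via $H_j$ being characteristic in the pronilpotent $G_j$), and you handle the inversion $u_j=[u_{j-1},g_j]=[g_j,u_{j-1}]^{-1}$ rather than silently identifying the two.
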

\begin{proof}
    Let
    $$
    \mathcal{C}=\Big\{(x_1,\ldots,x_t)\in S_1\times\cdots\times S_t \Bigm| (|x_i|,|x_{i+1}|)=1 \text{ for all } i=1,\ldots,t\Big\}.
    $$
    As $\varphi(\mathcal{C})=\varphi^*(S_i)$, we have $\mathcal{C}\neq\varnothing$.
    Note that $\mathcal{C}$ is closed in $G_1\times \cdots \times G_t$ by Lemma \ref{lem: everything is closed}, and define the map
    \begin{align*}
        \varphi: \mathcal{C} & \longrightarrow G_t\\
        (x_1,\ldots,x_t) & \longmapsto [x_1,\ldots,x_t].
    \end{align*}

    Fix $(x_1,\ldots,x_t)\in \mathcal{C}$. We first prove by induction on $i$ that $g_i:=[x_1,\ldots,x_i]\in G_{\delta^*_{i-1}}$ for every $i\in\{1,\ldots,t\}$ and that $\pi(g_i)\subseteq \pi(x_i)$.
    If $i=1$ the result is obvious, so assume $i>1$ and that $g_{i-1}$ is a $\delta_{i-2}^*$-value with $\pi(g_{i-1})\subseteq \pi(x_{i-1})$, so in particular $(|g_{i-1}|,|x_{i}|)=1$. If $H$ is the minimal Hall subgroup of the pronilpotent group $G_{i}$ containing $x_{i}$, then $g_{i-1}$ acts as a coprime automorphism of $H$. By Lemma \ref{lem: khukhro}, there exists $y_{i}\in H$ such that
    $$
    [x_{i},g_{i-1}]=[y_{i},g_{i-1},\overset{i-1}{\ldots}, g_{i-1}],
    $$
    and Lemma \ref{lem: long commutator of y_i are derived values} shows that $g_i=[x_{i},g_{i-1}]$ is a $\delta_{i-1}^*$-value, as desired.
    Hence, $|\Imm(\varphi)|<2^{\aleph_0}$, and by Proposition \ref{prop: Klopsch}, it follows that there exist elements $x_i\in G_i$ and open normal subgroups $U_i\trianglelefteq G_i$ such that
    $$
    \mathcal{C}\cap (x_1U_1\times\cdots\times x_tU_t) \neq \varnothing
    $$
    and $|\varphi^*(x_iU_i\cap S_i)|=1$.
\end{proof}

Lemma \ref{lem: with Klopsch is easier} will often provide some cosets of open subgroups of $G$ in which coprime commutators are trivial.
Lemmas \ref{lem: cosets to subgroups} and \ref{lem: coset to subgroups*} below will allow us to relate coprime commutators of these cosets with coprime commutators of the open subgroups themselves.

\begin{lemma}
\label{lem: cosets to subgroups}  
    Let $G_1,\ldots,G_t$ be subgroups of a profinite group $G$ such that $G_j \leq N_G(G_i)$ for every $j\le i$, and for every $i\in\{1,\ldots,t\}$, let $x_i\in G_i$ and $U_i\trianglelefteq G_i$.
    Assume also that $G_j\leq N_G(U_i)$ for every $j\le i$.
    Fix $j\in\{1,\ldots,t\}$ and write $J=\{1,\ldots,j-1\}$, then:
\begin{enumerate}
        \item If $\varphi(x_iU_i)=1$ then $\varphi_J(x_iU_i;U_i)=1$.

        \item If
        $\varphi_J(x_iU_i;U_i)=1$ then $$\varphi_{J\cup\{j\}}(x_iU_i;U_i)=\varphi(x_1U_1,\ldots, x_{j-1}U_{j-1},x_j,U_{j+1},\ldots,U_t).$$
    \end{enumerate}
\end{lemma}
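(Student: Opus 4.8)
The plan is to derive both statements from the commutator identity in Lemma~\ref{lem: split commutators Iker Marta}, applied always in the following guise: to split the leftmost ``perturbed'' entry $x_\ell u_\ell$ of a long commutator into a product $g'_\ell g_\ell$ in which $g'_\ell\in x_\ell U_\ell$ is (a conjugate of) $x_\ell$ and the remaining factor lies in $U_\ell$. The key point that makes this harmless is that the conjugating elements $h_i$ produced by that lemma lie in $G_\ell\cdots G_i$, and for every index $m$ with $\ell\le m\le i$ we have $m\le i+1$, so $G_m\le N_G(U_{i+1})$ by hypothesis; hence $h_i\in N_G(U_{i+1})$ and conjugation by $h_i$ sends a $U_{i+1}$-entry back into $U_{i+1}$. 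Thus, as long as the entries to the right of position $\ell$ range over the $U_i$'s, the conjugations appearing in Lemma~\ref{lem: split commutators Iker Marta} do not take us out of the cosets we are working with.

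For (i), I would prove by downward induction on $k$, from $k=t+1$ to $k=j$, that $\varphi_{J_k}(x_iU_i;U_i)=1$, where $J_k=\{1,\ldots,k-1\}$; the case $k=t+1$ reads $\varphi_{\{1,\ldots,t\}}(x_iU_i;U_i)=\varphi(x_iU_i)=1$, which is the hypothesis, and the case $k=j$ gives the conclusion. For the step from $k+1$ to $k$ (with $j\le k\le t$), take $w_i\in x_iU_i$ for $i<k$ and $w_i\in U_i$ for $i\ge k$, and apply Lemma~\ref{lem: split commutators Iker Marta} at position $\ell=k$ with $g_k=w_k$ and $g'_k=x_k$, so that $g'_kg_k=x_kw_k\in x_kU_k$. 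Then $\varphi_{\{k\}}(x_kw_k;w_i)$ is a commutator whose entries lie in $x_iU_i$ for $i\le k$ and in $U_i$ for $i>k$, hence in $\varphi_{J_{k+1}}(x_iU_i;U_i)=1$; and the term $[w_1,\ldots,w_{k-1},x_k,w_{k+1}^{h_k},\ldots,w_t^{h_{t-1}}]$ likewise has entries in $x_iU_i$ for $i\le k$ and (by the observation above on the $h_i$) in $U_i$ for $i>k$, so it too lies in $\varphi_{J_{k+1}}(x_iU_i;U_i)=1$. The identity of Lemma~\ref{lem: split commutators Iker Marta} then collapses to $[w_1,\ldots,w_t]=1$, as needed.

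For (ii), the inclusion $\supseteq$ is immediate from $x_j\in x_jU_j$. For the reverse inclusion, take a value $[w_1,\ldots,w_t]$ with $w_i\in x_iU_i$ for $i\le j$ and $w_i\in U_i$ for $i>j$; writing $w_j=x_ju_j$ with $u_j\in U_j$ we have $w_j=(x_ju_jx_j^{-1})\,x_j$ with $x_ju_jx_j^{-1}\in U_j$, and I apply Lemma~\ref{lem: split commutators Iker Marta} at $\ell=j$ with $g_j=x_j$ and $g'_j=x_ju_jx_j^{-1}$. Exactly as in part~(i), the term $[w_1,\ldots,w_{j-1},\,x_ju_jx_j^{-1},\,w_{j+1}^{h_j},\ldots,w_t^{h_{t-1}}]$ has entries in $x_iU_i$ for $i<j$, in $U_j$ at position $j$, and in $U_i$ for $i>j$, so it lies in $\varphi_J(x_iU_i;U_i)=1$ by the hypothesis of~(ii). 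The identity then gives $[w_1,\ldots,w_t]=[w_1,\ldots,w_{j-1},x_j,w_{j+1},\ldots,w_t]$, which manifestly belongs to $\varphi(x_1U_1,\ldots,x_{j-1}U_{j-1},x_j,U_{j+1},\ldots,U_t)$, establishing $\subseteq$.

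The main thing to watch — more a point of care than a genuine obstacle — is the bookkeeping: one must split off $x_\ell$ itself rather than the $U_\ell$-part, so that \emph{both} terms on the right-hand side of Lemma~\ref{lem: split commutators Iker Marta} fall into cosets where triviality is already known, and at each application one must verify that the conjugating elements $h_i$ normalize the relevant $U_{i+1}$, which is precisely the content of the hypothesis $G_m\le N_G(U_b)$ for $m\le b$.
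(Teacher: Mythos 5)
Your proof is correct, and part (ii) runs essentially parallel to the paper's: you apply Lemma~\ref{lem: split commutators Iker Marta} with $g'_j=x_ju_jx_j^{-1}\in U_j$ and $g_j=x_j$, whereas the paper first carries out the rewriting $[y,x_ju_j]=[y,u']^{x_j[x_j,y]}[y,x_j]$ by hand and only then invokes that lemma for the tail; the underlying decomposition is the same.

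Part (i), however, genuinely diverges from the paper. There the authors introduce the nested centralizers $C_t=1$, $C_i=C_{U_i}(U_{i+1}/C_{i+1})$, translate the inductive hypothesis into the containment $[Y,x_jU_j]\subseteq C_{G_j}(U_{j+1}/C_{j+1})$, and finish with the elementary identity $[y,x_ju_j]=[y,u_j][y,x_j]^{u_j}$ together with normality of that centralizer in $G_j$; Lemma~\ref{lem: split commutators Iker Marta} is not used. You instead derive (i) directly from Lemma~\ref{lem: split commutators Iker Marta}, splitting $x_kw_k=x_k\cdot w_k$ at position $\ell=k$ and observing that the commutator with entry $x_kw_k$ and the ``error'' term with entry $x_k$ both land in $\varphi_{J_{k+1}}(x_iU_i;U_i)=1$, forcing $[w_1,\ldots,w_t]=1$. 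The verification that the conjugators $h_i\in G_k\cdots G_i$ normalize $U_{i+1}$ (because $G_m\le N_G(U_{i+1})$ for $m\le i+1$), so that the conjugated entries stay in their respective $U_i$, is exactly the crucial point and you have it right. Your route is more uniform, handling both parts with the same lemma; the paper's argument for (i) is somewhat more elementary and self-contained. One small caveat: your closing remark that one must ``split off $x_\ell$ itself rather than the $U_\ell$-part'' is accurate for part (i) but not for part (ii), where you (correctly) do the opposite, taking $g'_j\in U_j$ and $g_j=x_j$; this does not affect the validity of the proof.
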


\begin{proof}
    (i) We will proceed by reverse induction on $j\in\{1,\ldots,t+1\}$, where the base case $j=t+1$ translates to $\varphi(x_iU_i)=1$, which is true by hypothesis.
    Let thus $j<t+1$ and assume that $\varphi_{J\cup\{j\}}(x_iU_i;U_i)=1$.

    Let $C_{t}=1$ and for every $i\in \{j+1,\ldots,t-1\}$ define $C_{i}=C_{U_{i}}(U_{i+1}/C_{i+1})$.
    Note that $C_i$ is well-defined, since using that for every $\ell$ the subgroup $U_{\ell}$ is normal in $G_1\cdots G_{\ell}$, one can easily show by induction that $C_\ell\trianglelefteq G_1\cdots G_\ell$.
    
    If $j\ge 2$, let
    $$
    Y=\{[x_1u_1,\ldots, x_{j-1}u_{j-1}]\mid u_i\in U_i,\,i=1,\ldots,j-1\}.
    $$
    Then, we can rewrite $\varphi_{J\cup\{j\}}(x_iU_i;U_i)=1$ as
    $$
    [Y, x_{j}U_{j}] \subseteq C_{G_{j}}(U_{j+1}/C_{j+1}).
    $$
    For every $i\in \{1,\ldots,j\}$, fix $u_i\in U_i$ and shorten $y=[x_1u_1,\ldots , x_{j-1}u_{j-1}]$.
    Then we have $[y,x_ju_j]=[y,u_j][y,x_j]^{u_j}$, and since $C_{G_j}(U_{j+1}/C_{j+1})$ is a normal subgroup of $G_j$ containing $[y,x_ju_j]$ and $[y,x_j]$, it follows that $[y,u_j]\in C_{G_j}(U_{j+1}/C_{j+1})$.
    This shows that $\varphi(x_1U_1,\ldots,x_{j-1}U_{j-1},U_{j}, U_{j+1}, \ldots, U_t)=1$, as we wanted.
    
    For the case $j=1$, note that both $x_1$ and $x_1U_1$ lay in $C_{G_1}(U_2/C_2)$, so that $U_1\le C_{G_1}(U_2/C_2)$.
    
    \vspace{5pt}
    
    (ii) For every $i\in\{j+1,\ldots,t\}$ we define $C_i$ as in (i).
    For $i\in\{1,\ldots,t\}$, let $u_i\in U_i$ and shorten $y=[x_1u_1,\ldots,x_{j-1}u_{j-1}]$.
    Then,
    $$
    [y,x_ju_j]=[y,u'x_j]=[y,x_j][y,u']^{x_j}=[y,u']^{x_i[x_j,y]}[y,x_j]
    $$
    for some $u'\in U_j$, and note that $z:=[y,u']^{x_j[x_j,y]}\in C_{G_j}(U_{j+1}/C_{j+1})$.
    Then $[z,u'_{j+1},\ldots,u'_{t}]=1$ for every $u'_i\in U_i$, $i\in\{j+1,\ldots,t\}$, so that
    $$
    [y,x_ju_j,u_{j+1},\ldots,u_t]=[z[y,x_j],u_{j+1},\ldots,u_t]=[y,x_j,u_{j+1},\ldots,u_t],
    $$
    where the last equality follows from Lemma \ref{lem: split commutators Iker Marta}.
    The lemma follows.
\end{proof}

\begin{definition}
    \label{def: Nsigma}
    Let $G_1,\ldots,G_t$ be pronilpotent subgroups of a profinite group $G$ such that $G_j\leq N_G(G_i)$ for all $j\leq i$.
    Let $\sigma$ be a finite set of primes.
    We define the normal subgroup
    $$
    N_{\sigma}=\langle \varphi^*_{\{j\}}(H_i;G_i)\mid j\text{ is such that } |\pi(G_j)|=\infty\rangle^G,
    $$
    where $H_i$ is the Hall $\sigma$-subgroup of $G_i$ for every $i$.
    If $|\pi(G_i)|<\infty$ for all $i$, then $N_\sigma=\langle \varnothing \rangle^G =1$ for every $\sigma$.
\end{definition}

The subgroups $G_1,\ldots,G_t$ of $G$ for which the definition of $N_{\sigma}$ applies will be clear from the context. 
Notice that for any finite sets of primes $\sigma_1$ and $\sigma_2$ such that $\sigma_1\subseteq\sigma_2$ we have
    \begin{equation}
    \label{eq: N tau}
    N_{\sigma_1}\le N_{\sigma_2}.
    \end{equation}

\begin{lemma}
\label{lem: coset to subgroups*}
    Let $G_1,\ldots,G_t$ be pronilpotent subgroups of a profinite group $G$ such that $G_j\leq N_G(G_i)$ for all $j\leq i$.
    Fix $\ell\in\{1,\ldots,t\}$.
    For $i\in\{1,\ldots,\ell-1\}$, let $X_i\subseteq G_i$, and for $i\in\{\ell,\dots,t\}$ let $x_i\in G_i$ and $U_i\trianglelefteq_o G_i$ be such that $G_j\le N_G(U_i)$ for $j\le i$.
    Suppose that $(|x_{\ell}|,|x_{\ell-1}|)=1$ for every $x_{\ell-1}\in X_{\ell-1}$.
    Then:
    \begin{enumerate}
        \item If $(|x_{\ell}|,|U_{\ell+1}|)=1$ and $\varphi^*(X_1,\ldots, X_{\ell-1}, x_{\ell}U_{\ell},U_{\ell+1},\ldots,U_t)=1$, then $\varphi^*(X_1,\ldots, X_{\ell-1}, U_{\ell},\ldots,U_t)=1$.
    
        \item For $i\in\{\ell,\ldots,t\}$, suppose that either $|\pi(G_i)|=\infty$, in which case we write $Y_i=G_i$, or $|\pi(G_i)|=1$, in which case we write $Y_i=U_i$.
        If $\varphi^*(X_1,\ldots, X_{\ell-1}, x_{\ell}U_{\ell},\ldots,x_tU_t)=1,$
        then there exists a finite set of primes $\sigma$ such that
        $\varphi^*(X_1,\ldots, X_{\ell-1}, Y_{\ell},\ldots,Y_t)\subseteq N_{\sigma}$ (cf. Definition \ref{def: Nsigma}).
        Moreover, if the $x_i$ are good representatives of the cosets $x_iU_i$ (cf. Definition \ref{def: good representatives}) such that $(|x_i|,|x_{i+1}|)=1$ for all $i\in\{\ell,\ldots,t-1\}$, then if $\varphi^*(X_1,\ldots, X_{\ell-1}, x_{\ell}U_{\ell},\ldots,x_tU_t)=\varnothing$, also $\varphi^*(X_1,\ldots, X_{\ell-1}, U_{\ell},\ldots,U_t)=\varnothing$.
    \end{enumerate}
\end{lemma}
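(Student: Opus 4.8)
The plan is the following. First I would fix $\sigma$ to be the finite set of primes dividing $\prod_{i=\ell}^{t}|G_i:U_i|$, and for each $i\in\{\ell,\dots,t\}$ decompose the pronilpotent group as $G_i=H_i\times K_i$, with $H_i$ its Hall $\sigma$-subgroup and $K_i$ its Hall $\sigma'$-subgroup. Since $K_i$ is a $\sigma'$-group, its image in the $\sigma$-group $G_i/U_i$ is trivial, so $K_i\le U_i$; consequently $x_iU_i=c_iU_i$, where $c_i$ is the $\sigma$-component of $x_i$, and $\pi(c_i)\subseteq\sigma$. Throughout I write $g=g_\sigma g_{\sigma'}$ for the canonical factorization of $g\in G_i$, and I use repeatedly that in a pronilpotent group $\pi(gg')\subseteq\pi(g)\cup\pi(g')$, that the supports $\pi(g_\sigma)\subseteq\sigma$ and $\pi(g_{\sigma'})\subseteq\sigma'$ are disjoint, and that conjugation preserves supernatural orders.

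For part (i), I would take an arbitrary tuple $(x_1',\dots,x_{\ell-1}',u_\ell,\dots,u_t)$ in $(X_1\times\cdots\times X_{\ell-1}\times U_\ell\times\cdots\times U_t)\cap\mathcal{C}$ and apply Lemma~\ref{lem: split commutators Iker Marta} at the $\ell$-th coordinate with $g_\ell'=x_\ell$ and $g_\ell=u_\ell$. This writes $[x_1',\dots,x_{\ell-1}',x_\ell u_\ell,u_{\ell+1},\dots,u_t]$ as the product of a conjugate of $[x_1',\dots,x_{\ell-1}',x_\ell,u_{\ell+1}^{h_\ell},\dots,u_t^{h_{t-1}}]$ and $[x_1',\dots,x_{\ell-1}',u_\ell,u_{\ell+1},\dots,u_t]$. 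By the final assertion of Lemma~\ref{lem: split commutators Iker Marta} together with the hypotheses $(|x_\ell|,|x_{\ell-1}'|)=1$ and $(|x_\ell|,|U_{\ell+1}|)=1$, the first factor is a conjugate of an element of $\varphi^*(X_1,\dots,X_{\ell-1},x_\ell U_\ell,U_{\ell+1},\dots,U_t)=1$, hence trivial; and since $\pi(x_\ell u_\ell)\subseteq\pi(x_\ell)\cup\pi(u_\ell)$, the remaining two commutators both lie in that same set, which equals $1$. Therefore $[x_1',\dots,u_\ell,\dots,u_t]=1$, which gives $\varphi^*(X_1,\dots,U_\ell,\dots,U_t)=1$.

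For the first assertion of part (ii) I would work modulo $N_\sigma$. Given a tuple in $(X_1\times\cdots\times X_{\ell-1}\times Y_\ell\times\cdots\times Y_t)\cap\mathcal{C}$, I process each coordinate $i$ with $|\pi(G_i)|=\infty$ in turn: writing $y_i=(y_i)_\sigma(y_i)_{\sigma'}$ and splitting at the $i$-th coordinate via Lemma~\ref{lem: split commutators Iker Marta} produces two factors; the one carrying $(y_i)_\sigma\in H_i$ in the $i$-th coordinate lies in $\varphi^*_{\{i\}}(H_j;G_j)\subseteq N_\sigma$ since $|\pi(G_i)|=\infty$ (the coprimality is inherited because $\pi((y_i)_\sigma)\subseteq\pi(y_i)$ and conjugation preserves orders), while the other factor changes only the $i$-th coordinate, replacing $y_i$ by $(y_i)_{\sigma'}\in K_i\le U_i$. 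Iterating, modulo $N_\sigma$ the commutator becomes $[x_1',\dots,x_{\ell-1}',z_\ell,\dots,z_t]$ with $z_i\in K_i$ when $|\pi(G_i)|=\infty$ and $z_i=y_i\in U_i$ when $|\pi(G_i)|=1$, the coprimality still holding. It then remains to push such a reduced commutator into $N_\sigma$ using the hypothesis $\varphi^*(X_1,\dots,X_{\ell-1},x_\ell U_\ell,\dots,x_tU_t)=1$: I would reinsert the $\sigma$-components $c_i$ by a second round of Lemma~\ref{lem: split commutators Iker Marta}, again discarding the terms that fall into $N_\sigma$, until the surviving term lies in $\varphi^*(X_1,\dots,X_{\ell-1},x_\ell U_\ell,\dots,x_tU_t)$ and is therefore trivial. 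Enlarging $\sigma$ if necessary, and using $N_{\sigma_1}\le N_{\sigma_2}$ from~\eqref{eq: N tau} together with Lemma~\ref{lem: split subgroups}, is what reconciles all the intermediate coprimality conditions; the slots $i$ with $|\pi(G_i)|=1$ have to be treated separately, since there $Y_i=U_i$ and no Hall $\sigma$-subgroup is available to peel off.

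Finally, I would handle the ``moreover'' clause by contraposition. If $(x_1',\dots,x_{\ell-1}',u_\ell,\dots,u_t)$ lies in $(X_1\times\cdots\times X_{\ell-1}\times U_\ell\times\cdots\times U_t)\cap\mathcal{C}$, then so does the tuple obtained by replacing each $u_i$ by its $\sigma'$-component $(u_i)_{\sigma'}\in K_i\le U_i$, and then $(x_1',\dots,x_{\ell-1}',x_\ell(u_\ell)_{\sigma'},\dots,x_t(u_t)_{\sigma'})$ belongs to $(X_1\times\cdots\times X_{\ell-1}\times x_\ell U_\ell\times\cdots\times x_tU_t)\cap\mathcal{C}$: indeed $\pi(x_i(u_i)_{\sigma'})=\pi(x_i)\cup\pi((u_i)_{\sigma'})$ is a disjoint union of a subset of $\sigma$ (here one uses that $x_i$ is a good representative, so by Remark~\ref{rem: good representatives} its support avoids every prime not dividing $|G_i:U_i|$) and a subset of $\sigma'$, so consecutive supports are disjoint thanks to $(|x_i|,|x_{i+1}|)=1$, $(|x_\ell|,|x_{\ell-1}'|)=1$ and $(|u_i|,|u_{i+1}|)=1$. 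Hence $\varphi^*(X_1,\dots,X_{\ell-1},x_\ell U_\ell,\dots,x_tU_t)\ne\varnothing$, which is the desired contrapositive. The main obstacle is the coprimality bookkeeping in part (ii): when the $\sigma$-components are reinserted one must guarantee that \emph{every} coprimality relation in the new tuple holds at once, which is exactly why $\sigma$ must be large enough to contain $\bigcup_i\pi(c_i)$ and why the finitely-supported slots need a separate argument.
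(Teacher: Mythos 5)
Your argument for part (i) and for the ``moreover'' clause of (ii) is sound, although you route it a little differently from the paper: for (i) you apply Lemma~\ref{lem: split commutators Iker Marta} directly instead of passing through Hall subgroups and Lemma~\ref{lem: cosets to subgroups} as the paper does, and for the ``moreover'' clause you argue by contraposition while the paper argues directly from the emptiness of $\varphi^*(X_1,\dots,X_{\ell-1},x_\ell,\dots,x_t)$; both are fine. (One small slip in your (i): the second factor $[x_1',\dots,u_\ell,\dots,u_t]$ does not lie in $\varphi^*(X_1,\dots,X_{\ell-1},x_\ell U_\ell,U_{\ell+1},\dots,U_t)$, since $u_\ell\notin x_\ell U_\ell$ in general; what one actually uses is that the left-hand side and the first factor both vanish, and the product identity then forces the second factor to vanish.)

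The real gap is in the first assertion of part (ii), and it is precisely where you write that the slots $i$ with $|\pi(G_i)|=1$ ``have to be treated separately.'' Your strategy peels the $\sigma$-component of $y_i$ off into $N_\sigma$ and later reinserts the $\sigma$-component $c_i$ of $x_i$, discarding the spare terms again into $N_\sigma$. But when $|\pi(G_i)|=1$ the spare term produced on reinsertion has the form $[\ldots,x_i,\ldots]^{h}$ with $x_i$ sitting in the Hall $\sigma$-subgroup $H_i=G_i$; this is an element of $\varphi^*_{\{i\}}(H_j;G_j)$ with an index $i$ for which $|\pi(G_i)|<\infty$, and by Definition~\ref{def: Nsigma} such sets are \emph{not} among the generators of $N_\sigma$. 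So this spare term cannot simply be discarded, and the chain of congruences modulo $N_\sigma$ breaks at exactly those slots. The paper avoids this issue by not attempting to reinsert anything: it first applies part (i) backwards along $L=\{\ell,\dots,t\}$ to replace $x_iU_i$ by $V_i$ (where $V_i$ is the Hall $\sigma'$-subgroup of $G_i$ when $|\pi(G_i)|=\infty$, and $V_i=U_i$ when $|\pi(G_i)|=1$), obtaining $\varphi^*_L(V_i;X_i)=1$ outright, with the coprimality hypothesis of (i) checked at each step; only afterwards does it invoke Lemma~\ref{lem: split subgroups}(i) to pass from $V_i$ to $Y_i=G_i$ modulo $N_\sigma$, a move that touches only the slots with $|\pi(G_i)|=\infty$ since $V_i=Y_i=U_i$ at the finite ones. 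To repair your argument you would have to insert a preliminary reduction of the same kind, removing the $x_i$'s at the $|\pi(G_i)|=1$ slots via part (i) before starting the peel-and-reinsert procedure; without it, the reduced commutator $[x_1',\dots,z_\ell,\dots,z_t]$ cannot be matched against the hypothesis $\varphi^*(X_1,\dots,X_{\ell-1},x_\ell U_\ell,\dots,x_tU_t)=1$ up to an error in $N_\sigma$.
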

\begin{proof} 
(i) Since $\varphi^*(X_1,\ldots, X_{\ell-1}, x_{\ell}U_{\ell},U_{\ell+1},\ldots,U_t)\neq\varnothing$, there are $x_1,\ldots,x_{\ell-1}$ such that $x_i\in X_i$ and
    \begin{equation}
        \label{eq: coprime xi}
        (|x_j|,|x_{j+1}|)=1
    \end{equation}
    for all $j\in\{1,\ldots,\ell-2\}$.
    For $i\in\{\ell,\ldots,t\}$, let $H_i$ be a Hall $\pi_i$-subgroup of $U_i$, where  we choose $\pi_i$ such that 
    \begin{equation}
        \label{eq: coprime Hi}
       (|x_{\ell-1}|,|H_\ell|) = (|H_j|,|H_{j+1}|)=1
    \end{equation}
    for all $j\in\{\ell,\ldots,t\}$.
    Since $G_{\ell}$ is pronilpotent, we have $\pi(x_{\ell}h)\subseteq\pi(x_{\ell})\cup\pi(h)$ for all $h\in H_{\ell}$, and hence, as $(|x_{\ell}|,|U_{\ell+1}|)=1$, the set $\varphi(x_1,\ldots,x_{\ell-1},x_{\ell}H_{\ell},H_{\ell+1},\ldots,H_t)$ is contained in $\varphi^*(X_1,\ldots, X_{\ell-1}, x_{\ell}U_{\ell},U_{\ell+1},\ldots,U_t)$, and so it is equal to the trivial subgroup.
    Lemma \ref{lem: cosets to subgroups} now gives $\varphi(x_1,\ldots,x_{\ell-1},H_{\ell},\ldots,H_t)=1$.
    Since this holds for all $x_1,\ldots,x_{\ell-i}$ and for all Hall subgroups of $U_i,i\ge\ell$ satisfying (\ref{eq: coprime xi}) and (\ref{eq: coprime Hi}) respectively, the claim follows.
    
    \vspace{5pt}
    
    (ii) Write $L=\{\ell,\ldots,t\}$, and for $i\in L$, define
    $$
    \sigma_i=\left\{\begin{array}{ll}
    \pi(G_{i}/U_i)  & \text{if } |\pi(G_i)|=\infty,\\
    \pi(G_i) & \text{if } |\pi(G_i)|=1.
    \end{array}\right.
    $$
    Let $\sigma=\sigma_{\ell}\cup\cdots\cup\sigma_{t}$, and assume that the $x_i$ are all good representatives, and in particular that they are all $\sigma$-elements (cf. Remark \ref{rem: good representatives}).
    Furthermore, since $\varphi^*_L(x_iU_i;X_i)\neq\varnothing,$ we may also assume that $(|x_i|,|x_{i+1}|)=1$ for all $i\in\{\ell,\ldots,t-1\}$.
    For $i\in L$ with $|\pi(G_i)|=\infty$, let $V_i$ be the Hall $\sigma'$-subgroups of $G_i$, and for $i\in L$ with $|\pi(G)|=1$, set $V_i=U_i$ (notice that $V_i\le U_i$ if $|\pi(G_i)|=\infty$).
    We can apply (i) $t-\ell +1$ times to obtain $\varphi^*_{L}(V_i;X_i)=1$, where in each step the condition $(|x_{j}|,|V_{j+1}|)=1$ is guaranteed either by the hypothesis that $|\pi(G_j)|=1$ or $\pi(V_j)\subseteq \sigma'$.
    Now, from Lemma \ref{lem: split subgroups}(i), we obtain $\varphi^*_L(Y_i;X_i)\subseteq N_{\sigma}$, as desired. 
    
    Regarding the last claim, if $\varphi^*(X_1,\ldots, X_{\ell-1}, x_{\ell}U_{\ell},\ldots,x_tU_t)=\varnothing$ then we also have $\varphi^*(X_1,\ldots, X_{\ell-1}, x_{\ell},\ldots,x_t)=\varnothing$, so there exists an index $j\in\{1,\ldots,l-2\}$ such that $(|x_j|,|x_{j+1}|)\neq 1$ for all $x_j\in X_j$, $x_{j+1}\in X_{j+1}$, and the lemma follows.
\end{proof}

The following lemma is the focal point of the proof of Proposition \ref{prop: poly-pronilpotent for delta}, as it will allow us to funnel some values of certain coprime commutators into an accurately chosen subgroup.

\begin{lemma}
    \label{lem: going up}
    Let $G_1,\ldots,G_t$ be pronilpotent subgroups of a profinite group $G$ such that $G_j\le N_G(G_i)$ for all $j\le i$, and $|G_{\delta_{t-1}^*}|<2^{\aleph_0}$.
    Then, there exist a finite set $W\subseteq \varphi^*(G_i)$ and a finite set $\sigma$ of primes such that $\varphi^*(G_i)\subseteq N_\sigma \langle W \rangle ^G$.
\end{lemma}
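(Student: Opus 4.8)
The plan is to produce $\sigma$ and $W$ by a finite iteration driven by Lemma~\ref{lem: with Klopsch is easier}. If $\varphi^*(G_i)=\varnothing$ there is nothing to do, so assume otherwise. As noted in the proof of Lemma~\ref{lem: with Klopsch is easier}, every element of $\varphi^*(G_i)$ is a $\delta_{t-1}^*$-value of $G$; hence $\varphi^*(G_i)$ is a closed (Proposition~\ref{lem: everything is closed}), normal subset of $G_t$ of cardinality less than $2^{\aleph_0}$, so by Lemma~\ref{lem: finite conjugacy class} each of its elements has a finite conjugacy class in $G$. Put $\rho=\bigcup\{\pi(G_i)\mid |\pi(G_i)|<\infty\}$, a finite set of primes; the set $\sigma$ we build will always be required to contain $\rho$, so that the Hall $\sigma'$-subgroup of $G_i$ is trivial at every coordinate with $|\pi(G_i)|<\infty$. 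The intended division of labour is that the ``infinite-prime'' coordinates get funnelled in one blow into $N_\sigma$ (Definition~\ref{def: Nsigma}), while the finitely many ``finite-prime'' coordinates contribute only finitely many new generators, which are collected into $W$.

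A single round runs as follows. By Lemma~\ref{lem: with Klopsch is easier}, and after shrinking with Remark~\ref{rem: normal in Gj} so that $U_i=N\cap G_i$ for a single $N\trianglelefteq_o G$ (whence $U_i\trianglelefteq G_1\cdots G_i$ and $G_j\le N_G(U_i)$ for $j\le i$), we obtain $x_i\in G_i$ — which, by Remark~\ref{rem: good representatives}, we may take to be good representatives of $x_iU_i$ forming a consecutively coprime tuple — such that $\varphi^*(x_1U_1,\dots,x_tU_t)$ is a single element $w$. We add $w$ to $W$ and enlarge $\sigma$ to contain $\rho\cup\bigcup_i\pi(G_i/U_i)$. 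Working modulo $\langle w\rangle^G$ — which preserves all the hypotheses, since $\delta_{t-1}^*$-values of $G$ map onto $\delta_{t-1}^*$-values of $G/\langle w\rangle^G$ — the distinguished coset becomes $\varphi^*$-trivial, so Lemma~\ref{lem: coset to subgroups*}(ii) gives $\varphi^*(Y_1,\dots,Y_t)\subseteq N_\sigma\langle w\rangle^G$, where $Y_i=G_i$ if $|\pi(G_i)|=\infty$ and $Y_i=U_i$ otherwise. Next, given an arbitrary coprime tuple $(g_1,\dots,g_t)$, factor each entry at an infinite-prime coordinate as its $\sigma$-part (lying in the Hall $\sigma$-subgroup of $G_i$) times its $\sigma'$-part (lying in $U_i$): by Lemmas~\ref{lem: split commutators Iker Marta} and \ref{lem: split subgroups}, successively deleting the $\sigma$-parts at the infinite-prime coordinates changes the commutator only by factors in $N_\sigma$ — this is exactly what the definition of $N_\sigma$ records — so modulo $N_\sigma\langle w\rangle^G$ one may assume every infinite-prime entry is a $\sigma'$-element. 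Since $\sigma\supseteq\rho$, the entries at the finite-prime coordinates are automatically $\sigma$-elements, and it only remains to bring them into the $U_i$.

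At this point $w$ and the current $\sigma$ handle every coprime tuple whose finite-prime entries lie in the corresponding $U_i$; there remain the finitely many coset-configurations of the finite-prime entries modulo the $U_i$, and one re-enters the argument for each of them. The main obstacle is to show that this recursion terminates with $\sigma$ and $W$ finite: every new round shrinks the open subgroups $U_i$ and thereby creates further coset-configurations at the finite-prime coordinates, so one has to organise the recursion around a genuinely well-founded quantity. The natural candidate is an induction on the number $m$ of coordinates with $|\pi(G_i)|=\infty$, with the case $m=0$ (all coordinates carrying only the finitely many primes of $\rho$, so $N_\sigma=1$) settled directly by passing to an abelian situation via Lemma~\ref{lem: reduce to abelian case} and arguing as in Lemmas~\ref{lem: 3.1 of DeMoSh} and \ref{lem: meta-pronilpotent for gamma2} to get that $\langle\varphi^*(G_i)\rangle$ is finite, so that a finite generating subset serves as $W$. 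Throughout, keeping the consecutive-coprimality conditions in all $t$ slots compatible with the shrinking of the $U_i$ is precisely what makes Lemmas~\ref{lem: cosets to subgroups}, \ref{lem: coset to subgroups*} and \ref{lem: split subgroups}, together with the good-representatives device, indispensable. (Alternatively, the conclusion can be reached by contradiction: were no finite $\sigma$ and $W$ to work, repeated use of Lemma~\ref{lem: with Klopsch is easier} in the locally-constant alternative and of the splitting lemmas in the branching alternative would yield a Cantor scheme of coprime tuples whose commutators are $2^{\aleph_0}$ pairwise distinct $\delta_{t-1}^*$-values, contradicting the hypothesis.)
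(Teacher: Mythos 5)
Your single round — use Lemma~\ref{lem: with Klopsch is easier} to pin down one coset-product $\varphi^*(x_iU_i)=\{w\}$, pass to $G/\langle w\rangle^G$, and invoke Lemma~\ref{lem: coset to subgroups*}(ii) to absorb the infinite-prime coordinates into $N_\sigma$ — matches the paper's base step. But the step you yourself flag as the ``main obstacle,'' termination of the re-entry, is where the argument genuinely breaks down. You propose to induct on $m=|\{i\mid |\pi(G_i)|=\infty\}|$, but re-entering the argument with a fresh coset-configuration at the finite-prime coordinates does \emph{not} decrease $m$: the infinite-prime coordinates are still infinite-prime in every sub-case, so this induction has no bite on the branching you need to tame. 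You appear to be conflating the induction on $|\mathcal{I}|$ that occurs in Proposition~\ref{prop: poly-pronilpotent for delta} (where Lemma~\ref{lem: going up} is \emph{applied}, and that induction serves only to show $N_\sigma$ is finite) with the internal induction of the lemma itself. The paper's actual well-founded quantity is entirely different: it is a double induction, outer on the size of a subset $J$ of the \emph{finite-prime} coordinates (going up from $\varnothing$ to all of $\{1,\ldots,t\}\smallsetminus\mathcal{I}$), tracking at each stage a claim $\varphi^*_{\mathcal{I}\cup J}(G_i;U_i)\subseteq N_{\sigma(J)}\langle W_J\rangle^G$ with $U_i$ open only at the not-yet-handled coordinates, and inner a reverse induction on $r$ that raises the $J$-coordinates from the shrunken $U_i$ back up to $G_i$ one at a time via Lemma~\ref{lem: split subgroups} and the coset representatives $S_{j}$. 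The device of first shrinking to $V_i=\bigcap_{J^-\subsetneq J}U_i^{J^-}$ so that all the smaller inductive hypotheses hold simultaneously is what makes the outer step go through, and you have nothing playing that role.

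Your proposed base case is also shaky: when $m=0$ you claim $\langle\varphi^*(G_i)\rangle$ is finite by appealing to Lemma~\ref{lem: reduce to abelian case} and ``arguing as in'' Lemmas~\ref{lem: 3.1 of DeMoSh} and~\ref{lem: meta-pronilpotent for gamma2}, but Lemma~\ref{lem: reduce to abelian case} only reduces to showing $K/K'$ finite, and the meta-pronilpotent lemmas concern a two-term coprime factorisation $G=HQ$, not an arbitrary tower $G_1,\ldots,G_t$; it is not clear how to transfer them, and in any case the paper does not shortcut this case — when $\mathcal{I}=\varnothing$ the conclusion comes out of the very same $|J|$-induction run all the way up to $J=\{1,\ldots,t\}$, at which point $N_\sigma=1$ by Definition~\ref{def: Nsigma}. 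The alternative Cantor-scheme sketch is too vague to assess. In short: the tools are the right ones, but the inductive organisation that actually makes the lemma true is missing and the surrogate you offer does not close the gap.
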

\begin{proof}
    Let
    $$
    \mathcal{I}=\{i\in \{1,\ldots,t\} \mid |\pi(G_i)|=\infty\}.
    $$
    It suffices to prove the theorem in the case when $|\pi(G_i)|=1$ for all $G_i$ with $i\notin \mathcal{I}$. The general case, where each $G_i$, $i\notin \mathcal{I}$, is the product of its Sylow subgroups follows by applying Lemma \ref{lem: split subgroups}.
    
    For $i\notin \mathcal{I}$, let $p_i$ be a prime such that $\pi(G_i)=\{p_i\}$.
    Then we have
    $$
    \varphi^*(G_i)=\varphi^*(G_1,\ldots,G_{i-2}, H_{i-1},G_i,H_{i+1},G_{i+2},\ldots,G_t),
    $$
    where $H_{i-1}$ and $H_{i+1}$ are the Hall $p_i'$-subgroups of $G_{i-1}$ and $G_{i+1}$, respectively.
    We can therefore assume, again by Lemma \ref{lem: split subgroups}(i), that for all $i\notin \mathcal{I}$ we have
    \begin{equation}
        \label{eq: Gi coprime with adjacents}
    \pi(G_i)\not\subseteq \pi(G_{i-1})\cup \pi(G_{i+1}).
    \end{equation}
    
    We claim that that for every $J\subseteq \{1,\ldots,t\}\smallsetminus\mathcal{I}$ there exist a finite set $W_J\subseteq \varphi^*(G_i)$, a finite set of primes $\sigma(J)$ and subgroups $U_i\trianglelefteq_o G_i$ with $i\notin \mathcal{I}\cup J$ such that $\varphi^*_{\mathcal{I}\cup J}(G_i;U_i)\subseteq N_{\sigma(J)}\langle W_J\rangle^G$.
    
    We proceed by induction on $|J|$.
    Assume first $J=\varnothing$.
    By Lemma \ref{lem: with Klopsch is easier}, for every $i\in \{1,\ldots,t\}$ there exist elements $x_i\in G_i$ and subgroups $U_i\trianglelefteq_o G_i$ such that $\varphi^*(x_iU_i)=\{w_\varnothing\}$ for a suitable $w_\varnothing \in G$.
    Moreover, by Remark \ref{rem: normal in Gj}, we may assume that $G_j\le N_G(U_i)$ for every $j\le i$.
    Hence, Lemma \ref{lem: coset to subgroups*}(ii) produces a finite set $\sigma(\varnothing)$ of primes such that $\varphi^*_{\mathcal{I}}(G_i;U_i)\subseteq N_{\sigma(\varnothing)}\langle w_\varnothing\rangle ^G$, so the claim follows for $|J|=0$.
    
    Assume now that $|J|\ge 1$ and that for every $J^{-}\subsetneq J$ there exist a finite set $W_{J^{-}}\subseteq \varphi^*(G_i)$, a finite set of primes $\rho(J^-)$ and subgroups $U^{J^{-}}_i\trianglelefteq_o G_i$, $i\notin\mathcal{I}$, such that $\varphi^*_{\mathcal{I}\cup J^{-}}(G_i;U^{J^{-}}_i)\subseteq N_{\rho(J^-)}\langle W_{J^{-}}\rangle ^G$ (here we are taking $U_i^{J^{-}}=G_i$ if $i\in J^{-}$).
    Let $W_J=\bigcup_{J^-} W_{J^{-}}$, $\rho=\bigcup_{J^{-}}\rho(J^-)$ and $V_i=\bigcap_{J^{-}} U^{J^{-}}_i$, so that, by (\ref{eq: N tau}), we have $\varphi^*_{\mathcal{I}\cup J^{-}}(G_i;V_i)\subseteq N_{\rho}\langle W_J \rangle ^G$ for every $J^{-}\subsetneq J$.
    Furthermore, by factoring out $N_{\rho}\langle W_J \rangle ^G$, we may assume that
    \begin{equation}
    \label{eq: choice of Vi}
    \varphi^*_{\mathcal{I}\cup J^{-}}(G_i;V_i)=1
    \end{equation}
    for every $J^-\subsetneq J$.
    Moreover, taking into account Remark \ref{rem: normal in Gj} we may further assume that $V_i$ is invariant under the conjugacy action of $G_j$ for every $j\le i$.

    Write $J=\{j_1,\ldots,j_n\}$ with $j_1<\cdots<j_n$, and for every $i\in J$, fix a set $S_i$ of coset representatives for $V_i$ in $G_i$ containing the identity.
    Write
    $$
    S_{j_1}\times\cdots\times S_{j_n}=\{\textbf{s}_1,\ldots,\textbf{s}_m\}
    $$
    with $\textbf{s}_{\ell}=(s_{\ell,j_1},\ldots,s_{\ell,j_n})$ for $\ell\in\{1,\ldots,m\}$.
    Denote $V_i=G_i$ for $i\in \mathcal{I}$. Since $1\in S_i$ for every $i$, we have $\varphi^*_{J}(S_i;V_i)\neq \varnothing$, so applying Lemma \ref{lem: with Klopsch is easier} we obtain elements $x_i\in V_i$ and subgroups $U_i\trianglelefteq_o V_i$ such that $\varphi^*_{J}(S_i; x_iU_i)$ takes a single value.
    Actually, since $1=\varphi^*_J(1;x_iU_i)\subseteq \varphi^*_{J}(S_i; x_iU_i)$, we have $\varphi^*_{J}(S_i; x_iU_i)=1$.
    Also, we may assume the $x_i$ to be good representatives of the $U_i$ (cf. Definition \ref{def: good representatives}) and therefore, if $J$ does not contain neither $i$ nor $i+1$, then $(|x_i|,|x_{i+1}|)=1$.
    Thus, for every $\ell\in\{1,\ldots,m\}$, we either have
    \begin{equation}
    \label{eq: recursion phi*}
    \varphi^*_{J}(s_{\ell,i}; x_iU_i)= \varnothing\quad\text{ or }\quad\varphi^*_{J}(s_{\ell,i}; x_iU_i)=1.
    \end{equation}
    Again, by Remark \ref{rem: normal in Gj} we may further assume that $U_i$ is invariant under the conjugacy action of $G_j$ for every $j\le i$.

    Let $J_0=\varnothing$, and for $r\in \{1,\ldots,n\}$, let $J_r= \{j_1,\ldots,j_r\}$.
    We also write $j_0=0$ for convenience.
    We will show that for every $r\in\{0,\ldots,n\}$, there exists a finite set of primes $\sigma(r)$ such that $\varphi^*_{J_{r}}(s_{\ell,i};Y_i^{(r)})\subseteq N_{\sigma(r)}$ for every $\ell\in\{1,\ldots,m\}$, where
    \begin{equation*}
    Y_i^{(r)}=\left\{ 
    \begin{array}{ll}
    G_i & \text{if }\ i \ge j_r,\, i\in \mathcal{I}\cup J,\\
    U_i & \text{if }\ i > j_r,\, i\not\in \mathcal{I}\cup J,\\
    x_iU_i & \text{if }\ i< j_r.
    \end{array}\right.
    \end{equation*}
    
    We argue by reverse induction on $r\in\{0,\ldots,n\}$; assume first $r=n$.
    Since $j_r\not\in\mathcal{I}$, we deduce from (\ref{eq: Gi coprime with adjacents}) that $(|s_{\ell,j_r}|,|G_{j_r-1}|)=(|s_{\ell,j_r}|,|x_{j_r+1}|)=1$.
    Thus, for all $\ell\in \{1,\ldots,m\}$, we obtain from (\ref{eq: recursion phi*}) and Lemma \ref{lem: coset to subgroups*}(ii) a finite set of primes $\sigma(r,\ell)$ such that $\varphi^*_{J_{r}}(s_{\ell,i};Y_i^{(r)})\subseteq N_{\sigma(r,\ell)}$. Defining $\sigma(r)=\bigcup_{\ell=1}^m\sigma(r,\ell)$, we obtain $\varphi^*_{J_{r}}(s_{\ell,i};Y_i^{(r)})\subseteq N_{\sigma(r)}$ for every $\ell\in\{1,\ldots,m\}$.
    
    Hence, we assume $r\le n-1$.
    By induction, we know that there exists a finite set of primes $\sigma(r+1)$ such that
    \begin{equation}
        \label{eq: inductive step r+1}
        \varphi^*_{J_{r+1}}(s_{\ell,i};Y_i^{(r+1)})\subseteq N_{\sigma(r+1)}
    \end{equation}
    for every $\ell\in\{1,\ldots,m\}$.
    We will first show that $\varphi^*_{J_{r}}(s_{\ell,i};Y_i^{(r+1)})\subseteq N_{\sigma(r+1)}$.
    Note that $Y_i^{(r+1)}\le V_i$ for every $i\not\in\mathcal{I}\cup J$ and that $U_{j_{r+1}}\le V_{j_{r+1}}$, so (\ref{eq: choice of Vi}) yields
    \begin{equation}
        \label{eq: Y tilde}
        \varphi^*_{J_r}(s_{\ell,i};\widetilde{Y}_i)\subseteq N_{\sigma(r+1)},
    \end{equation}
    where $\widetilde{Y}_i=Y_i^{(r+1)}$ if $i\neq j_{r+1}$ and $\widetilde{Y}_{j_{r+1}}=U_{j_{r+1}}$.
    As we chose the sets of representatives $S_{j}$ in such a way that the identity is contained in them, for every $\ell\in\{1,\ldots,m\}$, either $s_{\ell,j_{r+1}}$ is trivial or $|\pi(s_{\ell,j_{r+1}})|=1$, so in particular $s_{\ell,j_{r+1}}$ is a good representative.
    Thus, by (\ref{eq: inductive step r+1}) and (\ref{eq: Y tilde}), we deduce from Lemma \ref{lem: split subgroups} that $\varphi^*_{J_r}(s_{\ell,i};\overline{Y}_i)\subseteq N_{\sigma(r+1)}$, where $\overline{Y}_i=Y_i^{(r+1)}$ if $i\neq j_{r+1}$ and $\overline{Y}_{j_{r+1}}=s_{\ell,j_{r+1}}U_{j_{r+1}}$.
    Since this holds for every $\ell\in\{1,\ldots,m\}$, and since $G_{j_{r+1}}=\bigcup_{s\in S_{j_{r+1}}} sU_{j_{r+1}}$, we obtain
    $\varphi^*_{J_r}(s_{\ell,i};Y_i^{(r+1)})\subseteq N_{\sigma(r+1)}$, as we wanted.

    Now using (\ref{eq: Gi coprime with adjacents}) and Lemma \ref{lem: coset to subgroups*}, we conclude exactly as in the case $r=n$ that there exists a finite set $\sigma(r)$ of primes such that $\varphi^*_{J_{r}}(s_{\ell,i};Y_i^{(r)})\subseteq N_{\sigma(r)}$ for every $\ell\in\{1,\ldots,m\}$.
    
    This completes the reverse induction on $r$.
    In particular, for $r=0$, it follows that $\varphi^*_J(G_i;U_i)\subseteq N_{\sigma(0)}$, so this, in turn, concludes the inductive step on $|J|$, and the claim is proved.
    
    Finally, taking $J$ in such a way that $\mathcal{I}\cup J=\{1,\ldots,t\}$, we obtain a finite set of primes $\sigma(J)$ and a finite set $W\subseteq \varphi^*(G_i)$ such that $\varphi^*(G_1,\ldots,G_t) \subseteq N_{\sigma(J)}\langle W\rangle^G$, as desired. 
\end{proof}

Recall that if $G$ is a profinite group of Fitting height $k+1$, then there exist pronilpotent subgroups $T_0,\ldots,T_k$ of $G$ such that $T_j\le N_G(T_i)$ for every $j\le i$.
Moreover, $G=T_0\cdots T_k$, $T_k=\delta_k^*(G)$ and $T_i=[T_0,\ldots,T_i]\pmod{\delta_{i+1}^*(G)}$ for every $i\in\{0,\ldots,k\}$ (see the beginning of Section 3.2).

We remark that $\varphi$ and $\varphi^*$ were defined with variables $\{x_i\mid i=1,\ldots,t\}$ for a generic positive integer $t$.
Since we now want to apply the previous results to the subgroups $T_0,\ldots, T_k$, we will set $t=k+1$ and we will write $\varphi(T_{i-1})$ for $[T_0,\ldots,T_k]$ and $\varphi^*(T_{i-1})$ for $\varphi((T_0\times\cdots\times T_k)\cap \mathcal{C})$, where $\mathcal{C}$ is defined as in (\ref{eq: coprimes}).

\begin{lemma}
\label{lem: values contained in finite normal}
    Let $G=T_0\cdots T_k$ be as above with $T_k=\delta_k^*(G)$ abelian, and assume $|G_{\delta_k^*}|<2^{\aleph_0}$.
    Let $g\in\varphi^*(T_{i-1})$.
    Then, there exists a finite normal subgroup $N\trianglelefteq G$ such that $g\in N$.
\end{lemma}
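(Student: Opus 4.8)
The plan is to apply Lemma~\ref{lem: going up} to the pronilpotent subgroups $T_0,\ldots,T_k$ (so $t=k+1$ in the notation there): since $|G_{\delta_k^*}|<2^{\aleph_0}$ and, exactly as in the proof of Lemma~\ref{lem: with Klopsch is easier}, every element of $\varphi^*(T_{i-1})$ is a $\delta_k^*$-value, we obtain a finite set $W\subseteq\varphi^*(T_{i-1})$ and a finite set of primes $\sigma$ with $\varphi^*(T_{i-1})\subseteq N_\sigma\langle W\rangle^G$. In particular $g\in N:=N_\sigma\langle W\rangle^G$. All the commutators involved (both the elements of $W$ and the generators of $N_\sigma$) are iterated coprime commutators $[t_0,\ldots,t_k]$, which lie in $T_k$; since $T_k=\delta_k^*(G)$ is normal in $G$, the set $N$ is a normal subgroup of $G$ contained in $T_k$, hence abelian. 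It therefore suffices to show that $N$ is finite, and for this it is enough to prove separately that $\langle W\rangle^G$ and $N_\sigma$ are finite.

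For $\langle W\rangle^G$: the finitely many elements of $W$ are $\delta_k^*$-values, so each has a finite conjugacy class by Lemma~\ref{lem: finite conjugacy class}, whence $\langle W\rangle^G$ is topologically generated by the finite normal subset $\bigcup_{w\in W}w^G\subseteq G_{\delta_k^*}$ of the abelian group $T_k$. For $N_\sigma$: here the finiteness of $\sigma$ is essential, since it confines the generators of $N_\sigma$ --- coprime commutators one of whose entries runs through the Hall $\sigma$-subgroup of a $T_{j-1}$ with $|\pi(T_{j-1})|=\infty$ --- to a bounded collection of Sylow subgroups. In both cases I would set $R:=\delta_{k-1}^*(G)$, which under the standing hypothesis is meta-pronilpotent with $\gamma_\infty(R)=T_k$ and $\gamma_\infty(R)=[T_{k-1},\gamma_\infty(R)]$; applying Lemma~\ref{lem: Hall acts on Sylow} inside $R$ to decompose $T_k$ into coprime commutators $[x,y]$ with $x$ in a Sylow subgroup of $T_k$ and $y$ in a Hall complement of $R$, and then Lemmas~\ref{lem: khukhro} and~\ref{lem: long commutator of y_i are derived values} to rewrite each such $[x,y]$ as a genuine $\delta_k^*$-value, one sees that the relevant set of coprime commutators lies in $G_{\delta_k^*}$ and so has fewer than $2^{\aleph_0}$ elements; Lemma~\ref{lem: meta-pronilpotent for gamma2} (or directly Lemma~\ref{lem: 3.1 of DeMoSh}), applied inside $R$, then forces the subgroup in question --- a subgroup of $\gamma_\infty(R)=T_k$ --- to be finite.

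The main obstacle is precisely this abelian case. Lemma~\ref{lem: reduce to abelian case} only reduces the finiteness of a pronilpotent subgroup generated by $\delta_k^*$-values to the finiteness of its abelianization, and hence gives nothing once the subgroup sits inside the abelian group $T_k$, which is our situation. Getting around this requires genuinely routing the problem back to the meta-pronilpotent results already established for $\gamma$, and the delicate point is checking that, after the successive decompositions (Lemmas~\ref{lem: Hall acts on Sylow} and~\ref{lem: khukhro}), the commutators that appear are recognisable as $\delta_k^*$-values, so that the hypothesis $|G_{\delta_k^*}|<2^{\aleph_0}$ can be brought to bear through Lemma~\ref{lem: meta-pronilpotent for gamma2}.
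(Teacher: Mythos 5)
Your plan routes the problem through Lemma~\ref{lem: going up}, but this both inverts the logical order of the paper (Lemma~\ref{lem: going up} is deployed together with the present lemma inside Proposition~\ref{prop: poly-pronilpotent for delta}, and the finiteness of $\langle W\rangle^G$ there is supplied \emph{by} the present lemma) and, more importantly, leaves two genuine gaps even viewed as a self-contained argument. First, for $\langle W\rangle^G$: knowing that $W$ is a finite set of $\delta_k^*$-values, each with finite conjugacy class, only tells you that $\langle W\rangle^G$ is generated by a finite subset of the abelian group $T_k$; this does not give finiteness, since a finitely generated abelian profinite group (e.g.\ $\mathbb{Z}_p$) can be infinite. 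You need the generators to have \emph{finite order}, and nothing in your argument provides that. Second, for $N_\sigma$: to invoke Lemma~\ref{lem: meta-pronilpotent for gamma2} inside $R=\delta_{k-1}^*(G)$ you would need the full set $\{[g,h]\mid g\in R,\ h\in\gamma_\infty(R),\ (|g|,|h|)=1\}$ to be small, and there is no reason for an arbitrary such $[g,h]$ to be a $\delta_k^*$-value of $G$ --- $g$ and $h$ are merely products of $\delta_{k-1}^*$-values, not powers of single ones. Indeed, if your reduction worked it would show $\gamma_\infty(R)=T_k$ itself is finite, making Lemma~\ref{lem: going up} and the whole inductive machinery of the section redundant; that the paper does not take this shortcut is a strong hint it is not available.

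The paper's proof is much more local and exploits abelianness in a different way. Given a single $g=[x_0,\ldots,x_k]$ with the coprimality conditions, set $x:=[x_0,\ldots,x_{k-1}]$ (a $\delta_{k-1}^*$-value, by the argument in Lemma~\ref{lem: with Klopsch is easier}) and let $H$ be the minimal Hall subgroup of $T_k$ containing $x_k$, so $(|x|,|H|)=1$. Every $[x,h]$ with $h\in H$ is a $\delta_k^*$-value (Lemma~\ref{lem: long commutator of y_i are derived values}), so $K:=\{[x,h]\mid h\in H\}$ has fewer than $2^{\aleph_0}$ elements. The crucial point you miss is that since $H$ is abelian and normal in $G$, $K$ is actually a \emph{subgroup}: $[x,h_1h_2]=[x,h_2][x,h_1]^{h_2}=[x,h_2][x,h_1]$. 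A closed subgroup of a profinite group of size $<2^{\aleph_0}$ is finite, so every element of $K$ has finite order; combined with the finite conjugacy classes from Lemma~\ref{lem: finite conjugacy class}, $N:=\langle\bigcup_{k\in K}k^G\rangle$ is a finite normal subgroup containing $g=[x,x_k]\in K$. This one-commutator argument is exactly the ingredient that lets the paper's Lemma~\ref{lem: going up} machinery close, and is what your proposal would need to supply.
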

\begin{proof}
    Write $g=[x_0,\ldots,x_k]$, where $x_j\in T_j$ for all $j$ and $(|x_\ell|,|x_{\ell+1}|)=1$ for all $\ell\in\{0,\ldots,k-1\}$.
    In the same way as in the proof of Lemma \ref{lem: with Klopsch is easier}, we see that $[x_0,\ldots, x_j]$ is a $\delta_j^*$-value for every $j\in \{0,\ldots,k\}$.
 
    In particular $x:=[x_0,\ldots,x_{k-1}]$ is a $\delta_{k-1}^*$-value, and let $H$ be the minimal Hall subgroup of $\delta_k^*(G)$ containing $x_k$, so that $(|x|,|H|)=1$.
    Since, again, $[x,h]$ is a $\delta_k^*$-value for every $h\in H$, the set $K:=\{[x,h]\mid h\in H\}$ has less than $2^{\aleph_0}$ values, and, since $H$ is abelian and normal in $G$, it follows that $K$ is actually a subgroup of $G$.
    In particular, $K$ is finite, so every element of $K$ has finite order.
    Thus, we deduce from Lemma \ref{lem: finite conjugacy class} that the set $S:=\bigcup\{k^G\mid k\in K\}$ is finite, and therefore $N=\langle S\rangle$ is finite.
\end{proof}

We are now ready to prove the strong conciseness of $\delta_k^*$ in prosoluble groups of Fitting height $k+1$. 

\begin{proposition} \label{prop: poly-pronilpotent for delta}
    Let $G$ be a prosoluble group of Fitting height $k+1$.
    Assume that $|G_{\delta^*_k}|< 2^{\aleph_0}$.
    Then $\delta_k^*(G)$ is finite.
\end{proposition}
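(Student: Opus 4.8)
The plan is to first reduce to the case where $R:=\delta_k^*(G)$ is abelian, then use the ``going up'' lemma (Lemma \ref{lem: going up}) to show that the coprime commutator values of a suitable configuration $(T_0,\dots,T_k)$ all lie in a finite normal subgroup of $G$, and finally to deduce from this that $R$ itself is finite.

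\emph{Reduction.} If the Fitting height of $G$ is at most $k$, then $R=1$ by Theorem \ref{aaaa}; so I assume it is exactly $k+1$ and fix pronilpotent subgroups $T_0,\dots,T_k$ with $T_j\le N_G(T_i)$ for $j\le i$, $G=T_0\cdots T_k$, $T_k=R$, and $T_i=[T_0,\dots,T_i]\pmod{\delta_{i+1}^*(G)}$. Since $\delta_{k+1}^*(G)=1$, the subgroup $R$ is pronilpotent; as it is generated by $\delta_k^*$-values, Lemma \ref{lem: reduce to abelian case} (with $\phi=\delta_k^*$) reduces the proof to showing that $R/R'$ is finite. Passing to $G/R'$ — which has at most $|G_{\delta_k^*}|<2^{\aleph_0}$ many $\delta_k^*$-values, Fitting height at most $k+1$, and $\delta_k^*$-subgroup equal to the image of $R$ because coprime commutators lift to coprime commutators — I may therefore assume that $R=T_k$ is abelian.

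\emph{Confining the values.} I claim that, under these assumptions, $\varphi^*(T_{i-1})$ (with $t=k+1$ and $G_i=T_{i-1}$) is contained in a finite normal subgroup of $G$; more generally I claim this for $\varphi^*$ of any configuration $(G_1,\dots,G_t)$ of pronilpotent subgroups satisfying $G_j\le N_G(G_i)$ for $j\le i$ and with last slot contained in $R$, and I prove it by induction on $|\mathcal I|$, where $\mathcal I=\{i:|\pi(G_i)|=\infty\}$. Lemma \ref{lem: going up} gives a finite set $W\subseteq\varphi^*(G_i)$ and a finite set of primes $\sigma$ with $\varphi^*(G_i)\subseteq N_\sigma\langle W\rangle^G$; every $w\in W$ lies in a finite normal subgroup by (the argument of) Lemma \ref{lem: values contained in finite normal}, which goes through for any such configuration since $R$ is abelian and normal, so $\langle W\rangle^G$ is finite. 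When $\mathcal I=\varnothing$ we have $N_\sigma=1$ and we are done; otherwise each generating set $\varphi^*_{\{j\}}(H_i;G_i)$ of $N_\sigma$ (with $j\in\mathcal I$ and $H_j$ the Hall $\sigma$-subgroup of $G_j$) is exactly $\varphi^*$ of the configuration obtained from $(G_1,\dots,G_t)$ by replacing $G_j$ with the finite-prime-support subgroup $H_j$, a configuration with strictly smaller $|\mathcal I|$, so by induction it lies in a finite normal subgroup; as $\mathcal I$ is finite, $N_\sigma$ is finite, proving the claim. Thus there is a finite normal subgroup $M\le R$ with $\varphi^*(T_{i-1})\subseteq M$, and replacing $G$ by $G/M$ (if its Fitting height drops below $k+1$ we are done) I may assume in addition that $\varphi^*(T_{i-1})=\{1\}$.

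\emph{Conclusion.} It remains to show that if $G=T_0\cdots T_k$ as above has $\varphi^*(T_{i-1})=\{1\}$, then $T_k=R=1$; this finishes the proof, since then the original $R$ lies in $M$. Here one uses the structural identity $R=[T_0,\dots,T_k]$ and the abelianness of $R$: writing $R=\prod_p R_p$ as a product of its Sylow subgroups and applying the meta-pronilpotent Hall decomposition (Lemma \ref{lem: Hall acts on Sylow}) to $\delta_{k-1}^*(G)=T_{k-1}T_k$ — using that the Hall $p'$-part of $R$ centralizes $R_p$ — one obtains $R_p=[R_p,H]$ with $H$ the Hall $p'$-subgroup of $T_{k-1}$, acting coprimely on $R_p$; iterating this downward through $T_{k-1},\dots,T_0$ and invoking Lemma \ref{lem: khukhro} at each level to turn the acting elements into long commutators, one expresses the generators of $R_p$ as (products of) coprime commutators $[x_0,\dots,x_k]$ with $x_i\in T_i$, which by Lemma \ref{lem: long commutator of y_i are derived values} are $\delta_k^*$-values and hence lie in $\varphi^*(T_{i-1})=\{1\}$; so $R_p=1$ for every $p$ and $R=1$. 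The main obstacle is precisely this last step: the coprime commutators $[x_0,\dots,x_k]$ with $x_i\in T_i$ are a priori far fewer than the arbitrary commutators generating $[T_0,\dots,T_k]$, and ``coprime-ifying'' each slot via the Hall-subgroup and coprime-automorphism lemmas while keeping control of the primes that occur is where the real work lies; I expect it to need a nested induction (plausibly on $k$), the finiteness of $N_\sigma$ above being a comparatively routine bookkeeping point.
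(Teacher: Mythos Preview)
Your plan matches the paper's proof: the same reduction to abelian $R$ via Lemma~\ref{lem: reduce to abelian case}, the same induction on $|\mathcal I|$ combining Lemmas~\ref{lem: going up} and~\ref{lem: values contained in finite normal} to trap $\varphi^*$ of each configuration in a finite normal subgroup (the paper phrases the claim as $|\varphi^*(G_{i-1})|<\infty$ for $G_{i-1}\le T_{i-1}$, but this is equivalent to yours), and the same final target $T_k=\langle\varphi^*(T_{i-1})\rangle$.

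For the step you flag as the obstacle, the paper's execution is close to what you outline but goes \emph{upward} and stays at the level of subgroups throughout, so Lemma~\ref{lem: khukhro} plays no role here. One sets $U_0=T_0$ and, for $j\ge1$, $U_j=\prod_p[H_{j-1}(p'),Q_j(p)]$, where $H_{j-1}(p')$ is the Hall $p'$-subgroup of $U_{j-1}$ and $Q_j(p)$ the Sylow $p$-subgroup of $T_j$; an induction on $j$ using Lemma~\ref{lem: Hall acts on Sylow} gives $U_j\equiv T_j\pmod{\delta_{j+1}^*(G)}$, hence $U_k=T_k$. Writing $P_j(p)$ for the Sylow $p$-subgroup of $U_j$, the coprime-action identity $[G,A]=[G,A,A]$ yields $P_j(p)=\prod_{q\ne p}[P_{j-1}(q),P_j(p)]$, so iterating, each $P_k(q_k)$ is a product of subgroups $[P_0(q_0),\dots,P_k(q_k)]$ with $q_j\ne q_{j+1}$; Lemma~\ref{lem: split commutators Iker Marta} (not Lemma~\ref{lem: khukhro} or Lemma~\ref{lem: long commutator of y_i are derived values}) then places these in $\langle\varphi^*(T_{i-1})\rangle$. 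This is exactly the ``prime-controlled nested induction'' you anticipated, and once $T_k=\langle\varphi^*(T_{i-1})\rangle$ is established one concludes directly from Lemma~\ref{lem: values contained in finite normal} without passing to $G/M$.
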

\begin{proof}
    In view of Lemma \ref{lem: reduce to abelian case}, we may assume that $\delta_k^*(G)$ is abelian.
    Thus, we can take $T_0,\ldots,T_k\le G$ as in Lemma \ref{lem: values contained in finite normal}, so that $G=T_0\cdots T_k$ with $T_k=\delta_k^*(G)$ abelian.
   
    We claim that for every family of subgroups $G_{i-1}\le T_{i-1}$ with $i\in\{1,\ldots,k+1\}$ such that $G_j\le N_G(G_i)$ for $j\le i$, we have $|\varphi^*(G_{i-1})|<\infty$.
    We argue by induction on $|\mathcal{I}|$, where
    $$
    \mathcal{I}=\{i\in \{1,\ldots,k+1\} \mid |\pi(G_{i-1})|=\infty\}.
    $$
    If $|\mathcal{I}|=0$, then Lemma \ref{lem: going up} gives the result since for every finite set $W\subseteq \varphi^*(G_{i-1})$, the normal subgroup $\langle W\rangle^G$ is finite by Lemma \ref{lem: values contained in finite normal}, and since, by definition, $N_\sigma=1$ for every finite set of primes $\sigma$.
    Suppose thus $|\mathcal{I}|\ge 1$.
    Then, Lemma \ref{lem: going up} produces a finite set of primes $\sigma$ and a finite set $W\subseteq \varphi^*(G_{i-1})$ such that $\varphi^*(G_{i-1}) \subseteq N_{\sigma}\langle W \rangle^G$.
    Observe that by induction, for every $j\in\mathcal{I}$, we have $|\varphi^*_{\{j\}}(H_{i-1};G_{i-1})|<\infty$, where $H_{i-1}$ is the Hall $\sigma$-subgroup of $G_{i-1}$, and therefore $N_{\sigma}$ is finite by Lemma \ref{lem: values contained in finite normal}.
    Again by Lemma \ref{lem: values contained in finite normal}, $\langle W \rangle^G$ is also finite, and the claim follows.
    
    In particular, we have shown that $|\varphi^*(T_{i-1})|<\infty$.
    Let $U_0=T_0$; for $j\geq 1$ we construct inductively a series of subgroups $U_j\leq T_j$ in the following way.
    Let $H_{j-1}(p')$ and $Q_{j}(p)$ be, respectively, the Hall $p'$-subgroup of $U_{j-1}$ and the Sylow $p$-subgroup of $T_{j}$, and define
    $$
    U_{j}= \prod_{p\in\pi(G)}[H_{j-1}(p'),Q_{j}(p)].
    $$
    Denote by $P_j(p)=[H_{j-1}(p'),Q_{j}(p)]$ the Sylow $p$-subgroup of $U_j$.
    We claim that $Q_j(p)\equiv P_j(p)\pmod{\delta_{j+1}^*(G)}$ for every $p\in \pi(G)$ and every $j\in\{0,\ldots,k\}$, and in particular that $T_j\equiv U_j\pmod{\delta_{j+1}^*(G)}$.
    The case $j=0$ follows trivially, so let $j\ge 1$ and assume by induction that the congruences hold for $j-1$.
    Let $K_j(p')$ be the Hall $p'$-subgroup of $T_{j-1}T_j$.
    Since $\gamma_{\infty}(T_{j-1}T_j)=T_j\pmod{\delta_{j+1}^*(G)}$, Lemma \ref{lem: Hall acts on Sylow} yields
    $$
    Q_j(p)\equiv[K_{j-1}(p'),Q_{j}(p)]\equiv[H_{j-1}(p'),Q_{j}(p)]\pmod{\delta_{j+1}^*(G)},
    $$
    where the last congruence holds because $T_{j-1}T_{j}\equiv U_{j-1}T_j\pmod{\delta_{j+1}^*(G)}$ by induction, and hence $K_{j-1}(p')\equiv H_{j-1}(p')H^*_{j}(p') \pmod{\delta_{j+1}^*(G)}$, where $H^*_j(p')$ is the Hall $p'$-subgroup of $T_j$.
    The claim follows by the definition of $P_j(p)$.
    In particular, this shows that $U_k=T_k$.

    Furthermore, as $(|Q_j(p)|,|H_{j-1}(p')|)=1$, we have
    $$
    P_j(p)=[Q_{j}(p),H_{j-1}(p')]=[Q_{j}(p),H_{j-1}(p'),H_{j-1}(p')]=[P_j(p),H_{j-1}(p')]
    $$
    by \cite[Lemma 4.29]{Is08}, and thus, for every $j\in\{1,\ldots,k\}$ and very $p\in\pi(G)$ we obtain
    $$
    P_{j}(p)=\prod_{\substack{q\in\pi(G)\\ q\neq p}}[P_{j-1}(q),P_{j}(p)].
    $$
    Therefore, for every $q_k\in \pi(G)$,
    $$
        P_{k}(q_k)=\prod_{(q_0,\ldots,q_{k-1})\in S_{q_k}} [P_0(q_0),\ldots,P_{k}(q_k)],
    $$
    where
    $$
    S_{q_k}=\{(q_0,\ldots,q_{k-1})\in \pi(G)^{k-1}\mid q_j\neq q_{j+1}\ \text{for every}\ j=0,\ldots,k-1\}.
    $$
    By Lemma \ref{lem: split commutators Iker Marta}, this implies that $P_k(q_k)\le\langle \varphi^*(T_{i-1})\rangle$ for every $q_k\in\pi(G)$, and so
    $$
    \delta_k(G)=T_k=\prod_{p\in\pi(G)} P_k(p)=\langle \varphi^*(T_{i-1})\rangle.
    $$    The proposition follows from Lemma \ref{lem: values contained in finite normal}.
    \end{proof}

\section{The main theorems}

We recall that a \emph{minimal simple group} is a finite non-abelian simple group all of whose proper subgroups are soluble.
These groups have been classified by Thompson in \cite{Tho78}.
\begin{lemma}\label{lem: minimal simple groups}
    In every minimal simple group there exist an involution $e$ and an element $h$ of odd order such that $h^e=h^{-1}$. Moreover, for every positive integer $k$, the element
    $$g_k=[h,e,\overset{k-1}{\ldots},e]$$
    is both a non-trivial $\gamma_k^*$-value and a non-trivial $\delta_{k-1}^*$-value.
\end{lemma}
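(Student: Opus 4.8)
The plan is to establish the three assertions in turn — existence of $e$ and $h$; that $g_k$ is a $\gamma_k^*$-value; that $g_k$ is a $\delta_{k-1}^*$-value — using Thompson's classification \cite{Tho78} of minimal simple groups: $\mathrm{PSL}(2,2^p)$ with $p$ prime, $\mathrm{PSL}(2,3^p)$ with $p$ an odd prime, $\mathrm{PSL}(2,p)$ with $p>3$ prime and $p\equiv\pm2\pmod5$, $\mathrm{Sz}(2^p)$ with $p$ an odd prime, and $\mathrm{PSL}(3,3)$. For the first assertion I would exhibit in each of these groups a cyclic subgroup $\langle h\rangle$ of odd order $n>1$ whose normalizer contains a dihedral subgroup $\langle h,e\rangle$ of order $2n$ with $e$ an involution: for $\mathrm{PSL}(2,q)$ one uses the odd part of a maximal torus of order $q-1$ or $q+1$ (whichever has nontrivial odd part) together with a reflection in its dihedral normalizer; for $\mathrm{Sz}(2^p)$ the (odd) cyclic torus of order $2^p-1$, which has a dihedral normalizer of order $2(2^p-1)$; and for $\mathrm{PSL}(3,3)$ a subgroup $S_3\cong D_6$ inside a point stabilizer, taking $h$ of order $3$. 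In each case $h^e=h^{-1}$ with $h$ of odd order $n>1$ and $e$ an involution.

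An immediate induction on $k$ from the identity $[h^m,e]=h^{-2m}$ (valid since $h^e=h^{-1}$ and $\langle h\rangle$ is abelian) gives $g_k=[h,e,\overset{k-1}{\ldots},e]=h^{(-2)^{k-1}}$. As $n$ is odd, $(-2)^{k-1}$ is coprime to $n$, so $g_k$ has the same odd order $n>1$ as $h$; in particular $g_k\neq1$. The $\gamma_k^*$-claim now follows by induction: $g_1=h$ is a $\gamma_1^*$-value, and if $g_{k-1}$ is a $\gamma_{k-1}^*$-value, then $g_k=[g_{k-1},e]$ with $g_{k-1}$ a power of the $\gamma_{k-1}^*$-value $g_{k-1}$ and $(|g_{k-1}|,|e|)=(n,2)=1$, so $g_k$ is a $\gamma_k^*$-value.

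For the $\delta_{k-1}^*$-claim I would isolate a simple criterion. Call a subset $V_0$ of a finite group $G$ \emph{self-reproducing} if every element of $V_0$ can be written $[x,y]$ with $x,y$ powers of elements of $V_0$ and $(|x|,|y|)=1$. An easy induction on $j$ shows that a self-reproducing set is contained in the set $G_{\delta_j^*}$ of $\delta_j^*$-values for every $j\geq0$: if $V_0\subseteq G_{\delta_j^*}$, then every power of an element of $V_0$ is a power of a $\delta_j^*$-value, hence every element of $V_0$ is a $\delta_{j+1}^*$-value. Since $g_k$ has odd order, it therefore suffices to show that in each minimal simple group $S$ the set
$$
V_0=\{\text{involutions of }S\}\ \cup\ \{\text{elements of odd order of }S\}
$$
is self-reproducing; indeed then $g_k\in V_0\subseteq G_{\delta_{k-1}^*}$, completing the proof. (Lemmas \ref{lem: khukhro} and \ref{lem: long commutator of y_i are derived values} give an alternative way to package this bootstrapping, directly producing $\delta_{j+1}^*$-values from repeated coprime commutators by $\delta_j^*$-values, but are not needed once the criterion above is in place.)

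Verifying that $V_0$ is self-reproducing is the main obstacle, and it is here that the explicit structure of each family is used. For an element $z$ of odd order one writes $z=[z',t]$ with $z'\in\langle z\rangle$ and $t\in V_0$ normalizing $\langle z\rangle$: either an involution inverting $\langle z\rangle$ — available whenever $\langle z\rangle$ lies in a torus with dihedral normalizer, which covers all semisimple elements of $\mathrm{PSL}(2,q)$ and $\mathrm{Sz}(2^p)$, and is supplied inside a Borel subgroup for the unipotent elements — or, when no such involution exists (for instance for a Singer torus $C_{13}$ of $\mathrm{PSL}(3,3)$), an element $t$ of odd order $m$ with $(m,|z|)=1$ acting on $\langle z\rangle$ by a power map $x\mapsto x^c$ with $c-1$ coprime to $|z|$; in both cases $z',t\in V_0$ and $(|z'|,|t|)=1$. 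For an involution $e$, since all involutions of $S$ are conjugate and $V_0$ is conjugation-invariant, it is enough to exhibit one involution as such a commutator: in a subgroup $A_4\le S$ every involution is a product of two commuting involutions, hence equals $[e'',y']$ with $e''$ an involution and $y'$ of order $3$ (this covers $\mathrm{PSL}(2,q)$ for $q$ odd and $\mathrm{PSL}(3,3)$); for $\mathrm{PSL}(2,2^p)$ one writes a transvection as $[x,y]$ with $x$ semisimple of odd order and $y$ a transvection; and for $\mathrm{Sz}(2^p)$ one writes an involution in the centre of a Sylow $2$-subgroup $P$ as $[u,c]$ with $u\in Z(P)$ and $c$ in the torus of order $2^p-1$. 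Carrying out these routine, case-dependent verifications for the five families completes the argument; everything preceding is formal.
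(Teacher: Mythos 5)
Your overall strategy for the $\delta_{k-1}^*$-claim is genuinely different from the paper's. The paper simply cites Proposition~25 of \cite{BaMo19}, which asserts that every involution of a minimal simple group is a $\delta_\ell^*$-value for all $\ell$, and then feeds $g_1=\cdots=g_{k-1}=e$ and $H=\langle h\rangle$ into Lemma~\ref{lem: long commutator of y_i are derived values}; this yields the result in two lines. You instead introduce a ``self-reproducing'' set $V_0$ (involutions and odd-order elements) and try to reprove, for each Thompson family, that $V_0\subseteq G_{\delta_j^*}$ for all $j$ --- a considerably stronger statement, and one that essentially re-does the work of \cite{BaMo19} from scratch. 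Your reduction lemma (self-reproducing $\Rightarrow$ $V_0\subseteq G_{\delta_j^*}$ by induction on $j$) is correct and is a clean way to package such bootstrapping. Likewise, the paper gets the existence of $e,h$ in one line from a general structural fact (Theorem~2.13 of \cite{Is08}, applied to a non-abelian simple group, which necessarily has even order), whereas you again go through the classification case by case; this works but is heavier.

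However, there is a genuine gap in your verification that $V_0$ is self-reproducing. For an odd-order element $z$ you write $z=[z',t]$ with $z'\in\langle z\rangle$ and $t\in V_0$ acting nontrivially by a coprime automorphism on $\langle z\rangle$, and you claim that for unipotent $z$ such a $t$ is ``an involution \ldots supplied inside a Borel subgroup.'' This is false in general. In $\mathrm{PSL}(2,q)$ with $q\equiv 3\pmod 4$ the Borel subgroup has order $q(q-1)/2$, which is \emph{odd}, so it contains no involutions at all; and in fact no element of $N_G(\langle z\rangle)$ acts nontrivially on $\langle z\rangle$, since the torus acts on the unipotent radical by multiplication by squares and $\mathbb{F}_q^*$ has no element of order $4$. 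This affects $\mathrm{PSL}(2,3^p)$ for every odd prime $p$ (here $3^p\equiv 3\pmod 4$), and also $\mathrm{PSL}(2,p)$ for those $p\equiv 3\pmod 4$ in Thompson's list, e.g.\ $p=7$. Your hedging clause (``or \ldots an element $t$ of odd order $m$ acting by a power map with $c-1$ coprime to $|z|$'') does not save these cases either, because \emph{no} element whatsoever normalizes $\langle z\rangle$ while acting nontrivially on it. The claim for unipotent $z$ can be salvaged by abandoning the template $z=[z',t]$ with $z'\in\langle z\rangle$: one can instead use $[U,T]=U$ where $U$ is the Sylow $p$-subgroup and $T$ the (odd-order) Borel torus, producing $z=[u,t]$ with $u\in U$, $t\in T$ both of odd, coprime orders --- but that is a different argument from the one you sketched, and needs to be carried out carefully. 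As written, the case analysis is incorrect.
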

\begin{proof}
     The first claim follows from Theorem 2.13 of \cite{Is08} and the fact that non-abelian simple groups are of even order.
     Notice that $g_k=h^{(-2)^{k-1}}$, so $g_k\neq 1$ for every positive integer $k$. Clearly $g_i$ and $e$ are coprime for every $i\in \{1,\ldots,k-1\}$, so $g_k$ is a $\gamma_k^*$-value.
     Thus, it suffices to prove that the same is true for $\delta^*_{k-1}$.
     By Proposition 25 of \cite{BaMo19}, every involution of a minimal simple group is a $\delta^*_\ell$-value for every $\ell\in \mathbb{N}$.
     Hence, we can use Lemma \ref{lem: long commutator of y_i are derived values} with $g_1=\cdots=g_{k-1}=e$ and $H=\langle h \rangle$ and conclude the proof.
\end{proof}

We are now ready to prove our main results.
As in \cite{dms21}, we start showing that the Fitting subgroup of any profinite group $G$ with $|G_{\gamma_k^*}|<2^{\aleph_0}$ or $|G_{\delta_k^*}|<2^{\aleph_0}$ is infinite.

\begin{proposition}\label{prop: infinite Fitting}
    Let $G$ be an infinite profinite group and let $w^*=\delta_k^*$ or $w^*=\gamma_k^*$. Suppose that $|G_{w^*}|<2^{\aleph_0}$. Then the Fitting subgroup $F$ of $G$ is infinite.
\end{proposition}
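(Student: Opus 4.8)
The plan is to prove the contrapositive: assuming $F=F(G)$ is finite, we exhibit $2^{\aleph_0}$ distinct $w^*$-values in $G$.

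\emph{Reduction to $F(G)=1$.} Since $F$ is finite there is an open normal subgroup $U\trianglelefteq_o G$ with $U\cap F=1$; then $[U,F]\le U\cap F=1$, so for each prime $p$ the subgroup $O_p(U)$ (characteristic in $U$, hence normal in $G$) commutes with $F$, and $O_p(U)F$ is a normal pronilpotent subgroup of $G$, whence $O_p(U)\le F\cap U=1$. Thus $O_p(U)=1$ for every $p$, i.e. $F(U)=1$. A straightforward induction on $k$ gives $H_{\gamma_k^*}\subseteq G_{\gamma_k^*}$ and $H_{\delta_k^*}\subseteq G_{\delta_k^*}$ for every closed $H\le G$ (orders of elements agree in $H$ and in $G$), so $|U_{w^*}|\le|G_{w^*}|<2^{\aleph_0}$ and $U$ is still infinite. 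Hence we may assume $G$ is infinite with $F(G)=1$.

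\emph{Finding many commuting simple subgroups.} As $F(G)=1$, $G$ has no nontrivial abelian closed normal subgroup, so $C_G(\mathrm{Soc}(G))=1$: a nontrivial normal subgroup of $G$ contained in $C_G(\mathrm{Soc}(G))$ would contain a minimal closed normal subgroup of $G$, which would then centralise $\mathrm{Soc}(G)$, hence be abelian. Therefore $G$ embeds continuously in $\mathrm{Aut}(\mathrm{Soc}(G))$, and since $G$ is infinite, $\mathrm{Soc}(G)$ must be infinite. By the structure of minimal closed normal subgroups of profinite groups, $\mathrm{Soc}(G)$ is a Cartesian product of copies of finite non-abelian simple groups; passing to a countable sub-product we obtain pairwise-commuting finite non-abelian simple subgroups $S_i\le G$, $i\in\N$. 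In each $S_i$ fix a minimal simple subgroup $T_i$ and, by Lemma \ref{lem: minimal simple groups}, an involution $e_i\in T_i$ and an element $h_i\in T_i\setminus\{1\}$ of odd order with $h_i^{e_i}=h_i^{-1}$.

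\emph{The construction.} Put $m=k$ if $w^*=\gamma_k^*$ and $m=k+1$ if $w^*=\delta_k^*$, and set $g_i=[h_i,e_i,\overset{m-1}{\ldots},e_i]=h_i^{(-2)^{m-1}}\ne1$. Define $\Phi\colon\{0,1\}^{\N}\to G$ by $\Phi(\xi)=[h_\xi,e_\xi,\overset{m-1}{\ldots},e_\xi]$ with $h_\xi=\prod_{i:\xi_i=1}h_i$ and $e_\xi=\prod_{i:\xi_i=1}e_i$; as the factors commute and $e_i$ inverts $h_i$, we get $\Phi(\xi)=\prod_{i:\xi_i=1}g_i$, whose $i$-th coordinate is $g_i\ne1$ precisely when $\xi_i=1$. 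Hence $\Phi$ is injective and $|\Imm\Phi|=2^{\aleph_0}$, so it suffices to prove $\Imm\Phi\subseteq G_{w^*}$. If $w^*=\gamma_k^*$ this is immediate: $h_\xi$ is a $\gamma_1^*$-value, and for each $j$ the element $[h_\xi,e_\xi,\overset{j}{\ldots},e_\xi]=h_\xi^{(-2)^j}$ has odd order, coprime to $|e_\xi|$, so one further commutator with $e_\xi$ yields a $\gamma_{j+2}^*$-value; after $m-1=k-1$ steps we obtain a $\gamma_k^*$-value. If $w^*=\delta_k^*$, apply Lemma \ref{lem: long commutator of y_i are derived values} with $g_1=\dots=g_k=e_\xi$ and $H=\langle h_\xi\rangle$ (which $e_\xi$ normalises, with $(|H|,|e_\xi|)=1$): this shows $\Phi(\xi)$ is a $\delta_k^*$-value \emph{provided} each $e_\xi$ is a $\delta_{k-1}^*$-value of $G$. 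By Proposition 25 of \cite{BaMo19} each $e_i$ is a $\delta_{k-1}^*$-value of $T_i\subseteq G$, and since $G_{\delta_{k-1}^*}$ is closed (Proposition \ref{lem: everything is closed}) it is enough to show that any finite product of involutions lying in pairwise-commuting subgroups, each a $\delta_{k-1}^*$-value in its own subgroup, is again a $\delta_{k-1}^*$-value of $G$. I expect this to be the main obstacle: it amounts to re-pairing the coprime commutator witnesses across the factors, which is transparent when the $S_i$ can be taken pairwise isomorphic (a single witness in $S_0$ then serves in every factor); the $S_i$ can indeed be so chosen unless $\mathrm{Soc}(G)$ is a Cartesian product of infinitely many pairwise non-isomorphic finite simple groups, which is exactly the case where the re-pairing argument is needed. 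Granting this, $\Imm\Phi\subseteq G_{w^*}$, so $|G_{w^*}|\ge2^{\aleph_0}$, contrary to hypothesis, and the proposition follows.
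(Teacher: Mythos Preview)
Your overall strategy coincides with the paper's: reduce to $F(G)=1$, locate infinitely many pairwise commuting minimal simple sections, and use the dihedral pairs $(e_i,h_i)$ of Lemma~\ref{lem: minimal simple groups} to manufacture $2^{\aleph_0}$ distinct $w^*$-values. Two steps, however, do not go through as written.

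\textbf{The socle step.} You claim that $C_G(\mathrm{Soc}(G))=1$ because a nontrivial closed normal subgroup contained in it ``would contain a minimal closed normal subgroup of $G$''. This is false for profinite groups in general: there exist infinite profinite groups with $F(G)=1$ and \emph{no} minimal closed normal subgroup whatsoever. A standard example is the inverse limit of the iterated wreath powers $A_5,\ A_5\wr A_5,\ A_5\wr A_5\wr A_5,\ldots$; there the proper closed normal subgroups form a single descending chain with trivial intersection, so $\mathrm{Soc}(G)=1$ while $F(G)=1$. Your argument uses only $F(G)=1$ at this point, so it cannot succeed. The paper instead invokes the hypothesis $|G_{w^*}|<2^{\aleph_0}$ here: for a nontrivial $w^*$-value $x$, Lemma~\ref{lem: finite conjugacy class} gives $|x^G|<\infty$, hence $Z(\langle x^G\rangle)$ has finite index in $\langle x^G\rangle$; but $Z(\langle x^G\rangle)\le F(G)=1$, so $\langle x^G\rangle$ is finite. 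This produces finite minimal normal subgroups directly, and a short further argument (ruling out the case that their product is finite) shows there are infinitely many of them. You need to feed the hypothesis in at this stage.

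\textbf{The acknowledged gap for $\delta_k^*$.} You correctly isolate the crux --- that $e_\xi=\prod_i e_i$ must be shown to be a $\delta_{k-1}^*$-value in $G$ --- and then write ``Granting this''. That leaves the $\delta_k^*$ case unproved. The paper passes over the same point briskly (it simply invokes Lemma~\ref{lem: long commutator of y_i are derived values}), so you are not missing some clever trick that the paper supplies; but a proof that explicitly grants an unproved assertion is incomplete.

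A smaller point: a finite non-abelian simple group need not contain a minimal simple \emph{subgroup} $T_i$ --- a minimal non-soluble subgroup is not in general simple --- so one should work with minimal simple \emph{sections}, as the paper does.
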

\begin{proof}
    We first show that $F$ is non-trivial. Assume by contradiction that $F=1$. For every $w^*$-value $x$ of $G$, the normal closure $\langle x^G \rangle$ is finite.
    Indeed, by Lemma \ref{lem: finite conjugacy class}, $x^G$ is finite, so in particular $|G: C_G(\langle x^G \rangle)|<\infty$.
    As a consequence, the index $|\langle x^G \rangle:Z(\langle x^G \rangle)|$ is also finite, but $Z(\langle x^G \rangle)$ is contained in $F=1$.
    This implies that $G$ possesses finite minimal normal subgroups, so let $N$ be the product of all of them.
    
    If $N$ is finite, then there exists a normal open subgroup $K\trianglelefteq_o G$ such that $K\cap N=1$. Such a subgroup cannot contain any $w^*$-value since otherwise, repeating the same argument as before, we would obtain a minimal normal subgroup of $G$ contained in $K$, contradicting that $K\cap N=1$.
    If $w^*(K)=1$, then by Theorem \ref{aaaa} $K$ is either pronilpotent (if $w^*=\gamma_k^*$) or prosoluble of Fitting height $k$ (if $w^*=\delta_k^*$), and this contradicts the fact that $F\cap K=1$. This proves that $N$ is an infinite subgroup of $G$.
    
    None of the infinitely many minimal normal subgroups of $G$ is abelian because $F=1$, so each of these minimal normal subgroup contains a section isomorphic to a minimal simple group.
    For each minimal normal subgroup $N_i$, with $i\in I$, choose a section isomorphic to a minimal simple group $S_i$.
    We remark that by the previous discussion $I$ is an infinite set and so the Cartesian product of $S_i$ is a section of $G$. By Lemma \ref{lem: minimal simple groups} in each of these groups $S_i$ there exist an involution $e_{S_i}\in S_i$ and an element $h_{S_i}\in S_i$ of odd order with $h_{S_i}^{e_{S_i}}=h_{S_i}^{-1}$ such that $g_{S_i}:=[h_{S_i},e_{S_i},\overset{k-1}{\ldots},e_{S_i}]$ is a non-trivial $w^*$-value.
    Now, for each subset $J\subseteq I$, the element $c_J=\prod_{j\in J} g_{S_j}$ can be written as
    $$c_J=\Big[\prod_{j\in J} h_{S_j},\prod_{j\in J} e_{S_j},\ldots, \prod_{j\in J} e_{S_j}\Big].$$
    Clearly $\prod_{j\in J} e_{S_j}$ is an involution normalizing the cyclic subgroup generated by the element of odd order $\prod_{j\in J} h_{S_j}$, and hence it is a $w^*$-value (if $w^*=\delta_k^*$ we also need to use Lemma \ref{lem: long commutator of y_i are derived values}).
    However, there exist at least $2^{\aleph_0}$ distinct subsets $J\subseteq I$ that give rise to different $c_J$, against the assumption that $|G_{w^*}|<2^{\aleph_0}$, so $F\neq 1$.
    
    If we assume by contradiction that the Fitting subgroup $F$ is finite, then there would be a subgroup $K\trianglelefteq_o G$ with $K\cap F=1$, so that $K$ has trivial Fitting subgroup.
    By the previous argument, this can happen only if $w^*(K)=1$, so $K$ is either pronilpotent or prosoluble of Fitting height $k$, contradicting that $K\cap F=1$ and proving the proposition.
\end{proof}

\begin{proof}[Proofs of Theorems \ref{thm: gamma strongly concise} and \ref{thm: delta strongly concise}] In view of Theorem \ref{aaaa} it is sufficient
to show that if $|G_{w^*}|<2^{\aleph_0}$, then $G$ is finite-by-pronilpotent in the case $w^*=\gamma_k^*$ or finite-by-(prosoluble of Fitting height $k$) if $w^*=\delta_k^*$.
    We can assume $G$ to be infinite, otherwise the theorem is trivially true.

    We will denote the Fitting subgroup of $G$ by $F$, and for $i\ge 2$, let $F_i$ be the $i$-th Fitting subgroup of $G$.
    By Proposition \ref{prop: infinite Fitting}, $F$ is infinite (and hence the same is true for all $F_k$).
    Let $n=2$ if $w^*=\gamma_k^*$ and $n=k+1$ if $w^*=\delta_k^*$.
    By Propositions \ref{prop: meta-pronilpotent for gamma} and \ref{prop: poly-pronilpotent for delta}, $w^*(F_n)$ is finite.
    Therefore there exits an open normal subgroup $R\trianglelefteq_o F_n$ with $R\cap w^*(F_n)=1$.
    Theorem \ref{aaaa} implies that the Fitting height of $R$ is at most $n-1$, and hence $R$ is contained in $F_{n-1}$.
    However, since $F_n/F_{n-1}$ is the Fitting subgroup of $G/F_{n-1}$, it follows that $G/F_{n-1}$ has finite Fitting subgroup, and by Proposition \ref{prop: infinite Fitting} this can only happen if $G/F_{n-1}$ is finite.

    Thus, we will prove the result by induction on $|G:F_{n-1}(G)|$, with the base case $G=F_{n-1}(G)$ being trivial.
    Assume then that $|G:F_{n-1}(G)|>1$ and suppose first that $G/F_{n-1}$ has a nontrivial proper normal subgroup $N$.
    The inductive hypothesis yields $|w^*(N)|<\infty$, and working in $G/w^*(N)$, we obtain by Theorem \ref{aaaa} that $N/w^*(N)$ is prosoluble of Fitting height at most $n-1$.
    This implies that $N/w^*(N)$ is contained in the $(n-1)$-th Fitting subgroup of $G/w^*(N)$ and by inductive hypothesis $w^*(G/w^*(N))$ must be finite, so $w^*(G)$ is finite too.
    
    We can hence assume that $G/F_{n-1}$ is a simple group.
    Notice that if $G/F_{n-1}$ is abelian, then we can conclude simply by applying Proposition \ref{prop: meta-pronilpotent for gamma} or Proposition \ref{prop: poly-pronilpotent for delta}.
    Thus, the only case left is when $G/F_{n-1}$ is a finite non-abelian simple group.
    By Theorem \ref{aaaa} we have $w^*(G/F_{n-1})=G/F_{n-1}$, so there is a finite set $S$ consisting of $w^*$-values such that $G=\langle S \rangle F_{n-1}$.
    By Lemma \ref{lem: finite conjugacy class} the set $T:=\bigcup\{s^G\mid s\in S\}$ is finite, so the index $|G:C_G(T)|$ is also finite.
    This implies that the center of $\langle T \rangle$ has finite index in $\langle T\rangle$, so by Schur's theorem $\langle T \rangle'$ is finite too. Note that $\langle T \rangle'$ is normal in $G$.
    Factoring out $\langle T \rangle'$, we can assume $G/F_{n-1}$ to be abelian, and we conclude the proof as before.
\end{proof}

\end{document}